\newcommand{\eps}{\epsilon}
\newcommand{\grad}{\nabla}
\newcommand{\norm}[1]{\left|\left| #1 \right|\right|}
\newcommand{\abs}[1]{\left| #1 \right|}
\newcommand{\set}[1]{\left\{ #1 \right\}}
\newcommand{\brak}[1]{\left\langle #1 \right\rangle} 
\newcommand{\inner}[1]{\left\langle #1 \right\rangle} 
\newcommand{\R}{\mathbb{R}}
\newcommand{\Complex}{\mathbb{C}}
\newcommand{\Naturals}{\mathbb{N}}
\newcommand{\T}{\mathbb{T}}
\newcommand{\Torus}{\mathbb{T}}
\renewcommand{\S}{\mathbb{S}}
\newcommand{\tensor}{\otimes}
\newcommand{\cD}{\mathcal{D}}
\newcommand{\dee}{\mathrm{d}}
\newcommand{\dt}{\dee t}
\newcommand{\dx}{\dee x}
\DeclareMathOperator{\Div}{\mathrm{div}}
\DeclareMathOperator{\tr}{\mathrm{tr}}
\renewcommand{\P}{\mathbf{P}}
\newcommand{\E}{\mathbf{E}}
\newcommand{\EE}{\mathbf E}
\newcommand{\PP}{\mathbf P}
\newtheorem{theorem}{Theorem}[section]
\newtheorem{proposition}[theorem]{Proposition}
\newtheorem{corollary}[theorem]{Corollary}
\newtheorem{lemma}[theorem]{Lemma}
\newtheorem*{lemma*}{Lemma}
\theoremstyle{definition}
\newtheorem{definition}[theorem]{Definition}
\newtheorem{remark}[theorem]{Remark}
\newtheorem{example}[theorem]{Example}
\newtheorem{hypothesis}[theorem]{Hypothesis}
\newtheorem{setting}[theorem]{Setting}
\newcommand{\Ldiv}{L^2_{\mathrm{div}}}
\newcommand{\Hdiv}{H^1_{\mathrm{div}}}
\newcommand{\HH}{\dot{H}}
\newcommand{\diverg}{\textup{div} }
\def\dd{{\rm d}}
\newcommand{\Grad}{\grad}
\newcommand{\Integer}{\mathbb Z}
\newcommand{\Real}{\mathbb R}
\numberwithin{equation}{section}
\begin{document}

\title{A brief introduction to the mathematics of Landau damping}
\author{Jacob Bedrossian\thanks{Department of Mathematics, University of Maryland, College Park, MD 20742, USA \href{mailto:jacob@math.umd.edu}{jacob@math.umd.edu}. J.B. was supported by National Science Foundation Award DMS-2108633.}}

\maketitle

\begin{abstract}
In these short, rather informal, notes I review the current state of the field regarding the mathematics of Landau damping, based on lectures given at the \emph{CIRM Research School on Kinetic Theory}, November 14--18, 2022.
These notes are mainly on Vlasov-Poisson in $(x,v) \in \mathbb T^d \times \mathbb R^d$ however a brief discussion of the important case of $(x,v) \in \mathbb R^d \times \mathbb R^d$ is included at the end. 
The focus will be nonlinear and these notes include a proof of Landau damping on $(x,v) \in \mathbb T^d \times \mathbb R^d$ in the Vlasov--Poisson equations meant for graduate students, post-docs, and others to learn the basic ideas of the methods involved.
The focus is also on the mathematical side, and so most references are from the mathematical literature with only a small number of the many important physics references included. A few open problems are included at the end.

These notes are not currently meant for publication so they may not be perfectly proof-read and the reference list might not be complete. If there is an error or you have some references which you think should be included, feel free to send me an email and I will correct it when I get a chance.  
\end{abstract}

\setcounter{tocdepth}{2}
{\small\tableofcontents}

\section{The Vlasov equations}

In these notes we consider the Vlasov equations, a simplified model in plasma physics. 
Specifically, this model is \emph{collisionless} i.e. we are making the approximation that the collisions can be entirely neglected. 
The unknown is the \emph{distribution function} $F(t,x,v)$, which gives the number density of (say) electrons at location $x$ moving with velocity $v$.   
If these electrons are moving due only to the electric fields generated within the plasma itself, then we obtain the model 
\begin{align*}
  & \partial_t F + v \cdot \grad_x F + E(t,x) \cdot \grad_v  F = 0 \\
  & E = \grad_x W \ast \left( \int F dv - n_0\right). 
\end{align*}
The two most natural settings for this problem are $(x,v) \in \mathbb R^d \times \mathbb R^d$ and $(x,v) \in \mathbb T^d \times \mathbb R^d$.
Each case is interesting and the dynamics can be rather different. However, since this is a review on \emph{Landau damping}, I will for now restrict my attention to $(x,v) \in \mathbb T^d \times \mathbb R^d$; see Section \ref{sec:open} for more information on the unbounded case.
For the case of electrons in a plasma, the non-local interaction would be through Coulomb electrostatic interactions, i.e.
\begin{align}
\widehat{W}(k) = \frac{q^2}{m\eps_0 \abs{k}^2}, \label{eq:Coulomb}
\end{align}
where $q$ is the fundamental charge, $\eps_0$ is the permittivity of free space, and $m$ is the mass of an electron.
For the remainder of these notes I will generally suppress such physical constants, as they are not critical to understanding the problem posed on $\mathbb T^d \times \mathbb R^d$. 
The presence of ions is seen through the inclusion of the $-n_0$ in the formula for the electric field: indeed almost all plasmas in nature or in laboratories are `quasi-neutral' in the sense that the total amount of electric charge is still basically neutral, however, there are localized fluctuations allowing for non-trivial electric fields.
It takes a specialized effort to create a plasma which is not approximately charge neutral.
One can also use the Vlasov equations to model the motion of the ions in the massless electron limit (i.e. assuming the electrons instantly equilibriate in response to the motion of the ions), yielding the screened Vlasov--Poisson equations (physical constants suppressed):
\begin{align*}
\widehat{W}(k) = \frac{1}{1 + \abs{k}^2}; 
\end{align*}
this is a formal linearization of the more complicated Maxwell-Boltzmann law:
\begin{align}
E = -\grad \varphi, \quad\quad \Delta \varphi = n_0 e^{\varphi} - \int F dv. \label{eq:MB}
\end{align}
See e.g. \cite{bardos2018maxwell,herda2016massless} for some progress on deriving massless electron limits for the Vlasov--Poisson equations or \cite{griffin2021global,gagnebin2022landau,han2011quasineutral} and the references therein for works on the Vlasov equations with the full nonlinear Maxwell-Boltzmann law. 
For most of these notes we will be considering the Coulomb electrostatic interactions \eqref{eq:Coulomb}. 
I will focus mainly on single-species models, but most of what I will discuss can be extended to multiple species without much difficulty (see Remark \ref{rmk:twospec}).
Most plasmas are subjected to large magnetic fields, however, we won't discuss that case here, as that makes things significantly more complicated (see Section \ref{sec:open} below).
See the texts \cite{BoydSanderson,goldston2020introduction,Stix} regarding the plasma physics itself. 

Finally, we remark that one can also take the opposite sign in the interactions, i.e. 
\begin{align*}
  & \widehat{W}(k) = - \frac{1}{\abs{k}^2},
\end{align*}
which corresponds to Newtonian gravitational interactions, which is relevant to galactic dynamics (despite the large distances involved) \cite{Binney-Tremaine}.
I will refer to the case $\widehat{W}(k) = \pm \abs{k}^{-2}$ as \emph{Vlasov--Poisson} and the case $\widehat{W}(k) = (1 + \abs{k}^2)^{-1}$ and \emph{screened Vlasov--Poisson}. 

\subsection{Basic properties}
Local well-posedness of strong solutions of the Vlasov equations in various spaces goes back to the results of Horst in the 1980s \cite{horst1981classical,horst1987global}, for example, velocity-weighted Sobolev spaces
\begin{align*}
\norm{f}_{H^s_m} = \norm{\brak{v}^m \brak{\grad}^s f}_{L^2}; 
\end{align*}
with $m > d/2$ and $s > d/2+1$ is sufficient for a relatively straightforward proof. 
Global existence of strong solutions was proved in $1 \leq d \leq 3$ (regardless of electrostatic or gravitational interactions) assuming they are sufficiently well localized in velocity by Pfaffelmoser \cite{pfaffelmoser1992global}; see later results by Schaeffer \cite{schaeffer1991global}, Batt and Rein \cite{batt1991global}, and Lions and Perthame \cite{Lions1991propagation}\footnote{In $d \geq 4$ finite-time blow-up solutions are known to exist in the case of gravitational interactions (i.e. gravitational collapse) see e.g. \cite{lemou2008stable}, however, to my knowledge global regularity in the electrostatic case remains open (and should be quite hard, especially in $d \geq 5$). One can certainly debate whether or not it is worth the trouble to make a very detailed study of Vlasov-Poisson in $d \geq 4$, however, $d=4$ could make a good mathematical testing bed.}   . 

The equations are time-reversible: if $f$ is a solution, then so is $f(-t,x,-v)$.
All rearrangement invariant quantities are conserved (called the \emph{Casimirs}), so for example if the quantities are initially finite, all $L^p$ norms and the Boltzmann entropy are conserved 
\begin{align*}
& \norm{F(t)}_{L^p} = \norm{F(0)}_{L^p}, \\ 
& \int \int F(t,x,v) \log F(t,x,v) \dee x \dee v = \int \int F(0,x,v) \log F(0,x,v) \dee x \dee v. 
\end{align*}
Similarly, the energy (assuming it is initially finite) is conserved: 
\begin{align*}
\mathcal{E} = \int \int F(t,x,v) \abs{v}^2 \dee x \dee v + \frac{1}{2}\norm{E(t)}_{L^2}^2 = \int \int F(0,x,v) \abs{v}^2 \dee x \dee v + \frac{1}{2}\norm{E(0)}_{L^2}^2. 
\end{align*}

Every spatially homogeneous distribution $f^0(v)$ with $\int f^0 dv = n_0$ is a solution (notice that the electrons are all still moving around, its just that there are no density fluctuations). 
The first question physicists asked (literally \cite{Vlasov-damping}) is what happens to solutions for small disturbances of such a homogeneous equilibrium? 
The first thing to understand is the linearization, given by
\begin{align*}
  & \partial_t g + v \cdot \grad_x g + E \cdot \grad_v f^0 = 0 \\
  &  E = \grad_x \Delta_x^{-1} \rho \\
  & \rho(t,x) = \int g(t,x,v) dv. 
\end{align*}
The quantity $\rho$ is called the \emph{density (fluctuation)}, and it directly corresponds to ``density of electrons - average density of electrons'' as a function $(t,x)$. 
The most important set of equilibria are 
\begin{align*}
&\textup{Maxwellians: } \quad f^0(v) = \frac{n_0}{(4\pi T)^{d/2}} e^{- \frac{\abs{v}^2}{2T}}, 
\end{align*}
for some parameters $n_0$ and $T$ (the `number density' and `temperature' respectively). 
This corresponds to a homogeneous plasma in thermal equilibrium, which would be the natural state, at least locally, after a certain amount of time.
However, since collisions are often very weak in plasmas, there are other equilibria that are important as well. 

\section{Linear dynamics in $\mathbb T^d \times \mathbb R^d$: Landau damping, phase mixing, and the Orr mechanism}

We will be studying an effect called \emph{Landau damping}, discovered by Landau in 1946 \cite{Landau46} (although the analogous effect was noted in fluid mechanics decades earlier by Orr \cite{Orr07}). 
Landau damping involves the rapid damping of $E$ in the linearized or nonlinear Vlasov equations.
The fact that the Vlasov equations are time-reversible means this damping must be intrinsically infinite dimensional -- the effect that shares the most in common mathematically with Landau damping is \emph{dispersion} in for example, Schr\"odinger or wave equations, \emph{not} dissipative or collisional effects like in the heat equation or Boltzmann equations. 
Despite being discovered several decades after Orr's damping phenomenon, Landau damping is much more famous.
One primary reason for this is that it is a really significant and pervasive effect in the kinetic theory of plasmas, whereas it was harder to clearly identify as important in fluid mechanics since it is really only an obvious effect in 2D (whereas in 3D it is a rather subtle effect on the dynamics).
See for example \cite{Ryutov99,Stix,BoydSanderson} for discussions in the plasma physics literature. 
It is also thought to be important to galactic dynamics \cite{Binney-Tremaine}, however, so far no mathematical works have been able to really get at this connection in the most physically important regimes (i.e. in connection with the stability of galaxies). 

\subsection{Phase mixing and the Orr mechanism in the kinetic free transport equation} 
Before we begin our study of Landau damping in the linearized Vlasov equations, it makes sense to consider first the \emph{kinetic free transport equation}: 
\begin{align}
\begin{cases}
\partial_t g + v \cdot \grad_x g = 0 \label{def:KFT} \\
g(0,x,v) = g_{in}(x,v)
\end{cases}
\end{align}
This describes an ensemble of non-interacting particles just moving with constant velocity. 
There are several reasons why we should make sure we understand this case first.
Firstly, if the electric field is supposed to ``rapidly damp'' then, solutions to the Vlasov or linearized Vlasov equations should approach solutions of the kinetic free transport in some sense, and so it wouldn't be consistent if we didn't also see Landau damping in the free transport equations.
Secondly, on $\mathbb T^d$ at least, $E \cdot \grad_v f^0$ is a relatively compact perturbation of the $v \cdot \grad g$ term, which means the continuous spectrum is the same in both PDEs.
Given that the Vlasov equations are an infinite-dimensional, time-reversible system, the Landau damping \emph{must} be associated to the continuous spectrum of the linearized operator. 
Thirdly, Landau damping is very easy to prove for this equation. 

The free transport equation is easily solved and it is worth looking at the formula on both the physical and Fourier sides: 
\begin{align*}
& g(t,x,v) = g_{in}(x-tv,v) \\ 
& \hat{g}(t,k,\eta) = \widehat{g_{in}}(k,\eta + kt) \\
& \hat{\rho}(t,k) = (2\pi)^d \widehat{g_{in}}(k,kt). 
\end{align*}
From this last formula, it is already immediately apparent that for any $\lambda,s,\sigma \geq 0$, there holds
\begin{align*}
e^{\lambda \brak{kt}^s} \brak{kt}^\sigma \abs{\hat{\rho}(t,k)} \lesssim \sup_{\eta} e^{\lambda \brak{\eta}^s} \brak{\eta}^\sigma \abs{\widehat{g_{in}}(k,\eta)}. 
\end{align*}
Hence, we see immediately that \emph{regularity} of the initial condition in the velocity variables implies \emph{decay} of the density fluctuation -- however notice that it isn't free, one must \emph{pay regularity to get decay}. 
I will sometimes interchangeably use the terminology \emph{Landau damping}\footnote{For historical reasons, many people prefer that I not use the term ``Landau damping'' in connection to the free transport equation, so I will try to use the term ``phase mixing''} and \emph{phase mixing}. 
To see the origin of the term, one can plot the image of what the distribution function looks like in phase space, see Figure \ref{pic:KFT}. 
\begin{center}
\includegraphics[scale=0.6]{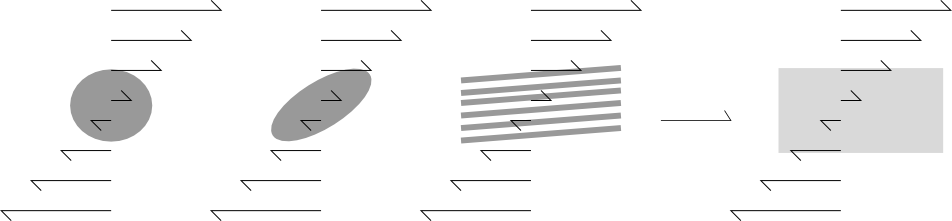}
\captionof{figure}{The evolution of the distribution function under kinetic free transport (time going from left to right). The last panel indicates weak convergence to $\frac{1}{(2\pi)^n}\int_{\mathbb T^d} g_{in}(x,v) \dee x$.} \label{pic:KFT} \medskip
\end{center}
Another thing one can see from the solution formulas is the weak convergence in $L^2$ to the $x$-average of the initial data (and it is not strong unless the initial data did not depend on $x$ to begin with)
\begin{align*}
g(t,x,v) \stackrel{t \to \infty}{\rightharpoonup} \frac{1}{(2\pi)^n} \int_{\mathbb T^d} g_{in}(x,v) \dee x. 
\end{align*}
Suppose you put $N$ particles on a ring all at the same location but with velocities randomly distributed by a smooth law $g_{in}(v)$, then for $t > 0$, the particles would spread out and for most times, be approximately equi-distributed around the ring. Only for times $T \gg N$ will the particles ever all come back to approximately the same location.
Phase mixing in the kinetic transport equation is simply the limit $N \to \infty$ analogue of this effect.
 The passage from finite to infinite dimensions is what allows the introduction of irreversibility in the limit $t \to \infty$: information is lost because the distribution function $g_{in}(x-tv,v)$ is not compact.
In dispersive/wave equations, irreversible dispersive decay is possible because compactness is lost by spreading the solution out over larger and larger scales. In phase mixing equations, irreversible Landau damping is possible due to stirring the solution to smaller and smaller scales (in the case of kinetic theory, smaller and smaller scales in velocity). 
Some variation of phase mixing occurs pretty much any time you have a continuum ensemble of oscillators with different frequencies, and so one can see it in a variety of mean-field models, such as the Kuramoto model \cite{FGVG}, the Vlasov-HMF model \cite{MR3437866}, active suspensions of swimming micro-organisms \cite{albritton2022stabilizing,CZHGV22}, various kinds of high Reynolds number fluid mechanics models (see e.g. \cite{bedrossian2019stability,BCZV17,BM13,BGM15I,BGM15II,WZZ19,WZZ20,WZZ18,BSY01,wei2020linear,bianchini2020linear} and the references therein), the Boltzmann equation for rarefied neutral gases \cite{bedrossian2022taylor}, damping of Alfv\'en waves in plasmas (see \cite{ren2021long} and for example in the physics literature \cite{rosenbluth1992continuum,biancalani2010continuous,zonca1993theory}), and many other settings. 

It turns out that, in retrospect, no discussion of Landau damping is complete without a discussion of the \emph{Orr mechanism}.
This effect was first discussed in the context of fluid mechanics \cite{Orr07}, in plasma physics it is known as \emph{anti-phase mixing} \cite{schekochihin2016phase,parker2016suppression}.
Suppose that the initial distribution function was given by the following: for some $\eta_0 \gg 1$,
\begin{align*}
\hat{g}_{in}(k,\eta) = e^{-\abs{\eta - \eta_0}}, 
\end{align*}
which yields
\begin{align*}
\hat{\rho}(t,k) = (2\pi)^n e^{-\abs{kt - \eta_0}}. 
\end{align*}
Hence, the density fluctuation is exponentially localized at $t_c = \eta_0/k$. This time was called \emph{the critical time} by Orr \cite{Orr07} and is the terminology we will adopt here.
What is specifically going on is that information in the $O(\eta_0^{-1})$ scales of the initial distribution function are eventually unmixed to $O(1)$ scales after a long time.
Again, this unmixing effect should not be surprising: the problem is time-reversible!
See Figure \ref{fig:OrrMech} for a visualization of what is happening in the distribution function.

\begin{center}
\includegraphics[scale=0.6]{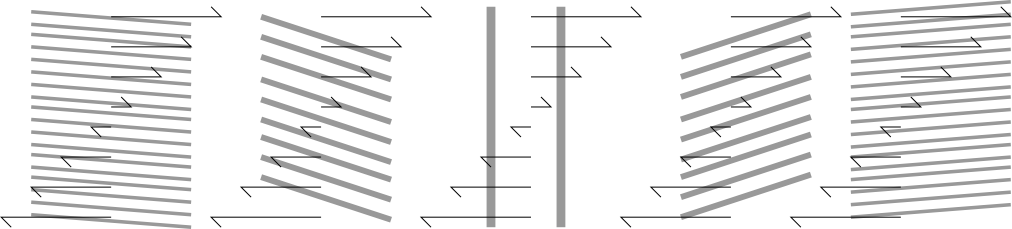}
\captionof{figure}{The evolution of the Fourier mode $(k,\eta_0)$ as it unmixes and (re-)mixes (time goes left to right). The center image occurs at $t \sim \eta_0/k_0$ -- the \emph{critical time}. One can view solutions of the free transport equation as a superposition of these tilting waves.} \label{fig:OrrMech}
\medskip
\end{center}

One gets a very similar transient growth and decay in dispersive equations, e.g. the Schr\"odinger equation, by starting with dispersed initial data that is set-up to re-focus at a future time (it is almost trivial to find smooth solutions to the linear Schr\"odinger equation on $\mathbb R^d$ with $\norm{u_0}_{L^\infty} \leq \eps$ but $\sup_t \norm{e^{it\Delta} u_0}_{L^\infty} = 1$).
In dispersive equations, one must pay some kind of localization to get pointwise-in-$t$ decay estimates (e.g. $\norm{e^{it\Delta} u_0}_{L^\infty} \lesssim \abs{t}^{-d/2} \norm{u_0}_{L^1}$). In phase mixing, one must pay \emph{regularity}. 
However, just like one can obtain time-averaged dispersive estimates in the form of Strichartz estimates for dispersive equations (see e.g. \cite{TaoBook}), one can also obtain some time-averaged decay (and regularization) without paying regularity.  
As one may expect, these kinds of sharp estimates can be very useful in nonlinear studies (both in kinetic theory and in fluid mechanics). 
\begin{theorem} \label{thm:FKTLD}
Let
\begin{align*}
\rho(t,x) = \int g_{in}(x-tv,v) dv,
\end{align*}
i.e. the density associated with a solution of the free transport equation. 
Then the following holds for any $\sigma \geq 0$, $\lambda \geq 0$ and $0 \leq s \leq 1$ and any integer $m > (d-1)/2$
\begin{align*}
\norm{\abs{\grad_x}^{1/2} \brak{\grad_x, t\grad_x}^\sigma e^{\lambda \brak{\grad_x, t \grad_x}^s} \rho }_{L^2_t L^2_x} & \lesssim \norm{\brak{v}^m \brak{\grad_x,\grad_v}^\sigma e^{\lambda \brak{\grad_x, \grad_v}^s} g_{in}}_{L^2_{x,v}}.
\end{align*}
\end{theorem}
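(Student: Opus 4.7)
My plan is to pass to Fourier variables, use the free-transport formula $\hat\rho(t,k) = (2\pi)^d\, \widehat{g_{in}}(k,kt)$, and reduce everything to a direction-independent one-dimensional Sobolev trace estimate in $\R^d_\eta$. By Plancherel in $x$, the squared left-hand side equals, up to a universal constant,
\begin{align*}
\sum_{k \in \Z^d} \int_0^\infty |k|\, \brak{k,tk}^{2\sigma}\, e^{2\lambda \brak{k,tk}^s}\, |\widehat{g_{in}}(k,kt)|^2\, dt;
\end{align*}
the $k=0$ mode vanishes thanks to the $|\grad_x|^{1/2}$ prefactor.

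For each $k \neq 0$ I would parametrize $\eta = tk$ along the ray $L_k^+ := \{sk : s \geq 0\} \subset \R^d$. Since the arc-length measure is $d\cH^1(\eta) = |k|\,dt$, the factor $|k|$ arising from $|\grad_x|^{1/2}$ squared is exactly the Jacobian required, and the expression becomes
\begin{align*}
\sum_{k \neq 0} \int_{L_k^+} |F_k(\eta)|^2\, d\cH^1(\eta), \qquad F_k(\eta) := \brak{k,\eta}^\sigma e^{\lambda \brak{k,\eta}^s}\, \widehat{g_{in}}(k,\eta).
\end{align*}
The heart of the argument is then the trace bound
\begin{align*}
\int_L |F(\eta)|^2\, d\cH^1(\eta) \lesssim \norm{F}_{H^m_\eta(\R^d)}^2,
\end{align*}
valid for every line $L \subset \R^d$ through the origin, with a constant independent of its direction. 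This is the classical Sobolev trace inequality: traces of $H^m$ functions on a subspace of codimension $d-1$ lie in $L^2$ precisely when $m > (d-1)/2$, and uniformity in the direction of $L$ comes from rotation invariance of the $H^m$ norm.

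To close, I would use that for integer $m$ one has $\norm{\brak{v}^m h}_{L^2_v}^2 \simeq \norm{\hat h}_{H^m_\eta}^2$ (expand $(1+|v|^2)^m$ into monomials and apply Plancherel, using $\sum_{|\alpha|\leq m}|v^\alpha|^2 \simeq (1+|v|^2)^m$). Taking $h = \tilde g_k$, where $\tilde g := \brak{\grad_x,\grad_v}^\sigma e^{\lambda \brak{\grad_x,\grad_v}^s} g_{in}$ is the function whose $x$-Fourier coefficient is precisely $F_k$, and summing over $k$ with Plancherel in $x$, reassembles the right-hand side. The main obstacle is really the uniform-in-direction trace bound; the rest is bookkeeping, whose crucial feature is that the Gevrey/regularity weight $\brak{k,\eta}^\sigma e^{\lambda \brak{k,\eta}^s}$ is a joint Fourier multiplier in $(k,\eta)$ and therefore commutes with the $\eta$-Bessel operator implicit in the trace, so it can be absorbed silently into $F_k$. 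The analytic mechanism by which the factor $|\grad_x|^{1/2}$ on the LHS is purchased without paying any velocity regularity is the exact cancellation of the Jacobian $|k|^{-1}$ in the change of variables $t \mapsto tk$ by $|k|^{1/2}$ squared.
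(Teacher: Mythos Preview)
Your proof is correct and follows the same route as the paper: Fourier restriction $\hat\rho(t,k)=(2\pi)^d\widehat{g_{in}}(k,kt)$, the factor $|k|$ from $|\nabla_x|^{1/2}$ squared serving as the Jacobian for $t\mapsto kt$, and then the Sobolev trace lemma onto a line in $\R^d_\eta$ (requiring $m>(d-1)/2$), converted back via $\|F_k\|_{H^m_\eta}\simeq\|\langle v\rangle^m\tilde g_k\|_{L^2_v}$. The only cosmetic difference is that the paper first reduces to $\sigma=\lambda=s=0$ and remarks the general case is immediate, whereas you absorb the weight into $F_k$ directly; one small wording nitpick is that the weight does not literally commute with $\langle D_\eta\rangle^m$, but what you actually use---that $F_k$ is the $v$-Fourier transform of $\tilde g_k$, so its $H^m_\eta$ norm is the weighted $L^2_v$ norm of $\tilde g_k$---is exactly right.
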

\begin{proof}
Only the case $\sigma=\lambda=s= 0$ is really interesting, the general estimate is basically an immediate consequence of the proof.
I will write the proof on $\mathbf T^d$ because that is where we will use it, but the same estimate extends in a straightforward manner to $\mathbb R^d$.
We have 
\begin{align*}
\sum_{k \neq 0} \int_{-\infty}^\infty \abs{k} \abs{\hat{\rho}(t,k)}^2 dt & = \sum_{k \neq 0} \int_{-\infty}^\infty \abs{k} \abs{\hat{g_{in}}(t,k,kt)}^2 dt \\
& = \sum_{k \neq 0} \int_{-\infty}^\infty \abs{\hat{g_{in}}(t,k,\frac{k}{\abs{k}}t)}^2 dt \\
& \lesssim \sum_{k \neq 0} \sum_{0 \leq j \leq m}\norm{D_\eta^j \hat{g_{in}}(t,k,\cdot)}_{L^2_\eta}^2 \\
& \lesssim \norm{\brak{v}^m g_{in}}_{L^2_x L^2_v}^2, 
\end{align*}
where the penultimate line followed by the Sobolev trace lemma for the $L^2$ restriction of a function in $\mathbb R^d$ to a line.  
\end{proof}
\begin{remark}
Note the gain of $\abs{\grad_x}^{1/2}$ regularity for free in the above estimate. This is the same gain seen in velocity averaging lemmas (see e.g. \cite{GolseEtAl1985,PerthameSouganidis1998,GolseEtAl1988,JabinVega2004}).
Indeed, Landau damping/phase mixing is basically the $t \to \infty$ analogue of velocity averaging (a little bit like how hypoellipticity and hypocoercivity are sometimes very closely linked). 
\end{remark}

\subsection{Landau damping in the linearized Vlasov--Poisson equations}
We are now tasked with solving the linearized Vlasov-Poisson equation, which we will only do for $x \in \mathbb T^d$ (it turns out to be a rather more difficult game on $\mathbb R^d$; see Section \ref{sec:open} below).
For our purposes we will only consider the case of analytic backgrounds $\widehat{f^0}(\eta)$ for our general theorem, however, this is not required. Specifically, we assume $\exists \lambda_0$ such that
\begin{align}
\abs{\widehat{f^0}(\eta)} + \abs{\grad_\eta \widehat{f^0}(\eta)} + \abs{\grad_\eta^2 \widehat{f^0}(\eta)} \lesssim e^{-\lambda_0 \abs{\eta}}. \label{ineq:regf0}
\end{align}
\begin{theorem}[Landau \cite{Landau46}, Penrose \cite{Penrose}, Degond \cite{Degond86}] \label{thm:LVP}
Under a suitable stability condition on $f^0$ (called the \emph{Penrose stability condition} given below), the following holds for the linearized Vlasov equation: there exists a kernel $R(t,k)$ such that
\begin{align*}
\hat{\rho}(t,k) = \widehat{g_{in}}(k,kt) + \int_0^t R(t-\tau,k) \widehat{g_{in}}(k,k\tau) d\tau, 
\end{align*}
and there $\exists \delta \in (0, \lambda_0)$ such that 
\begin{align*}
\abs{R(t,k)} \lesssim \frac{e^{-\delta \abs{kt}}}{\abs{k}}. 
\end{align*}
In particular this implies the following \emph{Landau damping estimate}, for any integer $m > (d-1)/2$, any $\sigma \geq 0$, and $\lambda \geq 0$ and any $s \in [0,1]$, 
\begin{align}
\norm{\abs{\grad_x}^{1/2} \brak{\grad_x, t\grad_x}^\sigma e^{\lambda \brak{\grad_x, t \grad_x}^s} \rho }_{L^2_t L^2_x} \lesssim \norm{\brak{v}^m \brak{\grad_x,\grad_v}^\sigma e^{\lambda \brak{\grad_x, \grad_v}^s} g_{in}}_{L^2_{x,v}}. \label{ineq:LDE}
\end{align}
One can then correspondingly show that if we define $f(t,x-tv,v) = g(t,x,v)$, then one can show $\exists f_\infty$ such that if $\sigma \geq 1$ in \eqref{ineq:LDE} and $J \geq 0$ is arbitrary, then there holds, 
\begin{align}
\lim_{t \to \infty}\norm{\brak{v}^J \brak{\grad}^{\sigma-1} (f(t) - f_\infty)}_{L^2} = 0; \label{ineq:scat}
\end{align}
\end{theorem}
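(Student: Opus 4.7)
The plan is three-fold: derive a scalar Volterra equation for $\hat\rho(t,k)$, solve it via Laplace transform combined with the Penrose condition to obtain the stated resolvent bound, and then combine that bound with Theorem \ref{thm:FKTLD} via a Young convolution argument. First, applying Duhamel along the free transport flow gives
\begin{equation*}
g(t,x,v) = g_{in}(x-tv,v) - \int_0^t E(\tau, x-(t-\tau)v)\cdot \grad_v f^0(v)\,d\tau;
\end{equation*}
Fourier transforming in $(x,v)$, evaluating at $\eta=0$, and using $\widehat{E}(t,k) = ik\widehat{W}(k)\hat\rho(t,k)$ yields the closed Volterra equation
\begin{equation*}
\hat\rho(t,k) = \widehat{g_{in}}(k,kt) + \int_0^t K(t-\tau,k)\,\hat\rho(\tau,k)\,d\tau, \qquad K(t,k) := |k|^2 \widehat{W}(k)\,t\,\widehat{f^0}(kt).
\end{equation*}
The sign/shape of this kernel is precisely what the Penrose condition regulates.

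Second, I would solve this equation via Laplace transform in $t$, giving $\hat\rho = H + R*H$ with $H(t,k)=\widehat{g_{in}}(k,kt)$ and resolvent $\tilde R = \tilde K/(1-\tilde K)$ on the Laplace side. In the Coulomb case the natural scaling is $\tilde K(s,k) = |k|^{-2}L(s/|k|, k/|k|)$, where $L(z,\omega) = \int_0^\infty e^{-z\tau}\tau\,\widehat{f^0}(\omega\tau)\,d\tau$ (up to a constant) is analytic in $\text{Re}\,z > -\lambda_0$ by \eqref{ineq:regf0}. The Penrose condition asserts $1-\tilde K(s,k)\neq 0$ on $\text{Re}\,s\geq 0$ for every $k\in\Z^d\setminus\{0\}$; since $|k|\geq 1$ on the torus and $\tilde K\to 0$ uniformly as $|k|\to\infty$, a compactness/continuity argument over directions in the unit sphere upgrades non-vanishing to a uniform lower bound $|1-\tilde K(s,k)|\geq c_0>0$ on an enlarged half-plane $\text{Re}\,s\geq -\delta|k|$ for some $\delta\in(0,\lambda_0)$. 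Shifting the Bromwich contour to $\text{Re}\,s=-\delta|k|$ in the inverse Laplace transform then produces the pointwise bound $|R(t,k)|\lesssim e^{-\delta|kt|}/|k|$, provided one also has decay of $\tilde R$ in the imaginary direction for absolute convergence of the contour integral (obtained by integration by parts using smoothness of $f^0$).

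Third, plugging the resolvent formula into \eqref{ineq:LDE}, the $H$ contribution is immediate from Theorem \ref{thm:FKTLD}, while for the convolution Young's inequality in $t$ yields
\begin{equation*}
\||k|^{1/2}(R*H)(\cdot,k)\|_{L^2_t} \leq |k|^{1/2}\|R(\cdot,k)\|_{L^1_t}\|H(\cdot,k)\|_{L^2_t} \lesssim |k|^{-3/2}\|H(\cdot,k)\|_{L^2_t},
\end{equation*}
which combined with the trace bound $\|H(\cdot,k)\|_{L^2_t}\lesssim |k|^{-1/2}\|\brak{v}^m\widehat{g_{in}}(k,\cdot)\|_{L^2_v}$ extracted in the proof of Theorem \ref{thm:FKTLD} and then summed in $k\neq 0$ closes the $L^2_tL^2_x$ estimate. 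The polynomial weight $\brak{\grad_x,t\grad_x}^\sigma$ and the Gevrey weight $e^{\lambda\brak{\grad_x,t\grad_x}^s}$ are absorbed via the subadditivity $\brak{k,kt}^s \lesssim \brak{k,k(t-\tau)}^s + \brak{k,k\tau}^s$ for $s\in[0,1]$, distributing the weights across the convolution and choosing $\lambda<\delta$ so the weighted kernel stays in $L^1_t$. For the scattering statement \eqref{ineq:scat}, setting $f(t,x,v)=g(t,x+tv,v)$ gives $\partial_t f = -E(t,x+tv)\cdot\grad_v f^0(v)$, and the decay of $E$ extracted from \eqref{ineq:LDE} makes $\int_0^\infty\|E(\tau,\cdot+\tau v)\cdot \grad_v f^0\|\,d\tau$ convergent in the desired weighted Sobolev norm, with the loss of one derivative in $\sigma$ reflecting the Cauchy--Schwarz step used to pass from the $L^2_t$ bound on $\rho$ to an $L^1_t$ bound.

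The principal technical obstacle is the second step: upgrading the qualitative Penrose non-vanishing to a \emph{uniform, quantitative} lower bound on $|1-\tilde K(s,k)|$ across the enlarged strip, and carefully tracking the $|k|$-scaling to extract both the $e^{-\delta|kt|}$ decay and the crucial $1/|k|$ prefactor --- without these, neither the pointwise bound on $R$ nor the $L^1_t$ control of $R(\cdot,k)$ needed for Young's inequality would close.
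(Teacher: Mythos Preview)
Your proposal is correct and follows essentially the same route as the paper: derive the Volterra equation for $\hat\rho$, solve via Laplace transform using the Penrose condition and decay of $\tilde K$ along the imaginary axis to get the resolvent bound $|R(t,k)|\lesssim |k|^{-1}e^{-\delta|kt|}$, then feed through Young's inequality plus Theorem~\ref{thm:FKTLD} for the damping estimate and integrate the equation for $f$ for scattering. The only notable difference is that the paper \emph{defines} the Penrose condition (Definition~\ref{def:Penrose}) as the uniform quantitative lower bound $|1-\tilde{\mathcal K}(z,k)|\geq\kappa$ on the enlarged half-plane $\mathrm{Re}\,z>-\delta|k|$, so your compactness-over-directions upgrade from a qualitative non-vanishing hypothesis is simply absorbed into the hypothesis rather than proved.
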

\begin{remark}
By analogy with scattering in dispersive equations (see e.g. \cite{TaoBook}) the property \eqref{ineq:scat} is usually called \emph{scattering}.
Due to the crucial role that regularity plays in Landau damping and phase mixing in general, it is of interest to obtain such convergence in the strongest norms (in terms of regularity) as possible. 
\end{remark}
\begin{remark}
For only Sobolev regular backgrounds, one can prove a similar result but with $\abs{R(t,k)} \lesssim \abs{k}^{-1} \brak{k,kt}^{\sigma}$. 
\end{remark}
\begin{remark} \label{rmk:twospec}
If one considers a collection of ions and electrons, then we obtain a two-species model of the following type (here I have assumed the ions and electrons have the same charge), 
\begin{align*}
  & \partial_t F_{\pm} + v \cdot \grad_x F_{\pm} \mp  \frac{1}{m_{\pm}} E(t,x) \cdot \grad_v  F_{\pm} = 0 \\
  & E = \grad_x W \ast \left( \int F_- - F_+ \right), 
\end{align*}
where $m_{\pm}$ is the respective mass of the ions and electrons. 
Except in the unusual case of electron-positron plasmas (a strange case, but is possible to study in a laboratory), ions are typically at least the mass of a proton, around two thousand times heavier than an electron.
It is relatively easy to adapt the proof of Theorem \ref{thm:LVP} to this two-species model provided the ion and electron equilibria $f^0_{\pm}$ are the same (i.e. $f^0_{+} = f^0_-$) and satisfies the Penrose condition. 
One obtains a vector Volterra equation for the two density fluctuations $\rho_{\pm}$ however, the Penrose criterion (see below) makes it easy to check that an analogous argument applies. In the case when $f^0_{\pm}$ are only slightly different, the proof also still applies.  
\end{remark}
\begin{proof}
The simplest and most classical proof of this result is via the Laplace transform -- the proof will reveal the proper stability condition.
See Appendix \ref{sec:FourierLaplace} for a quick review of Volterra equations and Laplace transforms.
We begin by taking the Fourier transform in $x$ of the linearized equations and integrating:
\begin{align*}
\hat{g}(t,k,v) = e^{- i k \cdot v t} \widehat{g_{in}}(k,v) - \int_0^t e^{-ik \cdot v (t-\tau)} \widehat{E}(\tau,k) \cdot \grad_v f^0(v) d\tau 
\end{align*}
Integrating in $v$ then gives
\begin{align*}
\hat{\rho}(t,k) = \int e^{- i k \cdot v t} \widehat{g_{in}}(k,v) dv + \int_0^t \widehat{\rho}(\tau,k) \frac{ik}{\abs{k}^2} \int e^{-ik \cdot v (t-\tau)} \cdot \grad_v f^0(v) dv d\tau. 
\end{align*}
This is a \emph{Volterra equation} of the form
\begin{align*}
\hat{\rho}(t,k) = H(t,k) + \int_0^t \widehat{\rho}(\tau,k) \mathcal{K}(t-\tau,k) d\tau, 
\end{align*}
with the source given by the density corresponding to the free transport evolution of the initial data: 
\begin{align*}
H(t,k) = \int e^{- i k \cdot v t} \widehat{g_{in}}(k,v) dv = \widehat{g_{in}}(k,kt),  
\end{align*}
and
\begin{align}
\mathcal{K}(t,k) = \frac{ik}{\abs{k}^2} \int e^{-ik \cdot v t} \cdot \grad_v f^0(v) dv =  -t \widehat{f^0}(kt). \label{eq:KlinVP}
\end{align}
In order to use Lemma \ref{lem:VoltPen} in Appendix \ref{sec:FourierLaplace}, we need to evaluate the Laplace transform of $\mathcal{K}$, given by the following 
\begin{align*}
\widetilde{\mathcal{K}}(z,k)  & = -\frac{1}{2\pi} \int_0^\infty e^{-z t} t\widehat{f^0}(kt) dt \\
& = -\frac{1}{2\pi \abs{k}^2} \int_0^\infty e^{- \frac{z}{\abs{k}} t} t \widehat{f^0}\left( \frac{k}{\abs{k}} t \right) dt. 
\end{align*}
Note that  $\mathcal{K}(z,k)$ is holomorphic in the half-plane $\mathrm{Re} z > -\lambda_0 \abs{k}$. 

\begin{definition}[Penrose stability condition] \label{def:Penrose}
The background $f^0$ satisfies the \emph{Penrose stability condition} if $\exists \delta \in (0,\lambda_0)$ and $\kappa > 0$ such that  
\begin{align*}
\inf_{k \in \mathbb Z^d_\ast} \inf_{\mathbf{Re} z > -\delta \abs{k} } \abs{1 - \widetilde{\mathcal{K}}(z,k)} > \kappa. 
\end{align*}
\end{definition}
There are many sources where it is discussed how to check this condition; see e.g. \cite{Penrose,MouhotVillani11}.
For electrostatic interactions (or any other purely repulsive interactions), any radially symmetric, monotone strictly decreasing distribution, i.e.
\begin{align*}
f^0(v) = b(\abs{v}) : b'(a) < 0 \textup{ for } a \neq 0, 
\end{align*}
satisfies the Penrose stability condition in all dimensions (as shown by Penrose \cite{Penrose}), which in particular includes any Maxwellian distribution (as concluded by Landau \cite{Landau46}). 
The condition is also open in sufficiently strong norms (as can be directly checked from the definition of $\widetilde{\mathcal{K}}$).
However, exponential eigenvalue instabilities can occur in other situations, for example two Maxwellians centered at different velocities, called a \emph{two-stream instability} (see e.g. \cite{Stix}). For attractive interactions, $f^0$ must be sufficiently small in order to avoid exponential instabilities (called the \emph{Jeans instability}). 

In order to apply Lemma \ref{lem:VoltPen}, we need to obtain a suitable decay estimate on $\tilde{\mathcal{K}}(z,k)$ in order to invert the Laplace transform.
One can specifically prove:
\begin{lemma} \label{lem:Ktail}
For any $\gamma > -\lambda_0 \abs{k}$, there holds (for $\abs{k} \geq 1$), 
\begin{align*}
\abs{\tilde{\mathcal{K}}(\gamma + i \omega,k) } \lesssim_{\gamma} \frac{1}{\left(1 + \abs{k}^2 + \abs{\omega}^2\right)}. 
\end{align*}
\end{lemma}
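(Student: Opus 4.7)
The plan is to establish two bounds on $\tilde{\mathcal{K}}(z,k)$ with $z = \gamma + i\omega$ and combine them: a direct bound that yields $\lesssim_\gamma |k|^{-2}$, which dominates when $|\omega|$ is comparable to or smaller than $|k|$, and an integration-by-parts bound that yields $\lesssim_\gamma |\omega|^{-2}$, which dominates for large $|\omega|$. Since $|k|\geq 1$ and therefore $1+|k|^2+|\omega|^2 \lesssim \max(|k|^2, 1+|\omega|^2)$, taking the minimum of the two bounds produces the desired $\lesssim_\gamma (1+|k|^2+|\omega|^2)^{-1}$.

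For the first bound, I would start from the expression
\[
\tilde{\mathcal{K}}(z,k) = -\frac{1}{2\pi |k|^2}\int_0^\infty e^{-(z/|k|)s}\, s\, \widehat{f^0}\!\left(\tfrac{k}{|k|}s\right) ds
\]
obtained by the substitution $s=|k|t$, take absolute values, and use the hypothesis $|\widehat{f^0}(\eta)|\lesssim e^{-\lambda_0|\eta|}$ from \eqref{ineq:regf0}. The integrand is dominated by $s\, e^{-(\gamma/|k|+\lambda_0)s}$, and since $\gamma>-\lambda_0|k|$ with $|k|\geq 1$, the exponent satisfies $\gamma/|k|+\lambda_0 \geq \min(\gamma+\lambda_0,\lambda_0)>0$ uniformly in $k$. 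The integral is thus bounded by a $\gamma$-dependent constant, yielding $|\tilde{\mathcal{K}}(z,k)|\lesssim_\gamma |k|^{-2}$.

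For the second bound, I would integrate by parts twice in $s$ in the same integral. Each IBP gains a factor of $|k|/z$ from integrating the exponential, and differentiates the polynomial-times-decaying factor; after two IBPs the integrand involves $\widehat{f^0}$, $\nabla\widehat{f^0}$, and $\nabla^2\widehat{f^0}$, all controlled by \eqref{ineq:regf0}. The boundary contributions vanish: at $s=\infty$ they vanish by the exponential decay of $\widehat{f^0}$ and its first two derivatives, and at $s=0$ the leading factor $s$ in the original integrand (and the fact that $\partial_s(s\widehat{f^0}(\hat k s))|_{s=0}=\widehat{f^0}(0)$ is paired with another $|k|/z$) ensures the needed cancellations, after which the remaining integral is again estimated by the uniform exponential decay. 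The result is $|\tilde{\mathcal{K}}(z,k)|\lesssim_\gamma |k|^2/|z|^2 \cdot |k|^{-2} = |z|^{-2} \leq |\omega|^{-2}$.

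The main obstacle is bookkeeping in step two: making sure the boundary terms genuinely vanish (which uses both the $s$-factor in the original integrand and the decay of $\widehat{f^0}$ with its first two derivatives) and that the $|k|$ powers balance so that the final bound is $|z|^{-2}$ rather than something weaker with residual $|k|$ dependence. Once those are tracked, the combination with the first bound is routine.
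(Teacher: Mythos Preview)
Your proposal is correct and follows essentially the same approach as the paper: a direct bound giving $\lesssim_\gamma |k|^{-2}$ for $|\omega|\lesssim |k|$, and two integrations by parts giving $\lesssim_\gamma |\omega|^{-2}$ for $|\omega|\gtrsim |k|$. The only cosmetic difference is that you carry out the integration by parts in the rescaled variable $s=|k|t$ (so derivatives of $\widehat{f^0}(\hat k s)$ do not produce factors of $|k|$), whereas the paper integrates by parts in $t$ and then absorbs the resulting $|k|$ factors via the exponential decay $e^{-\lambda_0|k|t}$; the two computations are equivalent.
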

\begin{proof}
Consider first the case $\abs{k} \gtrsim \abs{\omega}$.
In this case, 
\begin{align*}
\abs{\widetilde{\mathcal{K}}(z,k)} & \lesssim  \frac{1}{\abs{k}}\abs{\int_0^\infty e^{-\gamma t - i\omega t} kt\widehat{f^0}(kt) \dee t} \\
& = \frac{1}{\abs{k}^2}\abs{\int_0^\infty e^{-\frac{\gamma}{\abs{k}} t - i\frac{\omega}{\abs{k}} t} t\widehat{f^0}\left( \frac{k}{\abs{k}} t\right) \dee t} \\
& \lesssim_\gamma \frac{1}{\abs{k}^2}. 
\end{align*}
In the case $\abs{\omega} \gtrsim \abs{k}$, we instead integrate by parts; 
\begin{align*}
\abs{\widetilde{\mathcal{K}}(z,k)} & \lesssim  \abs{\int_0^\infty e^{-\gamma t - i\omega t} t \widehat{f^0}(kt) \dee t} \\
& \lesssim  \abs{ \frac{1}{\gamma + i \omega} \int_0^\infty e^{-\gamma t - i\omega t} \left( \widehat{f^0}(kt) + tk \cdot \grad_\eta \widehat{f^0}(kt) \right) \dee t } \\
& \lesssim  \abs{\frac{1}{\gamma + i \omega}^2} \abs{f^0(0)} + \abs{\frac{1}{(\gamma + i \omega)^2}\int_0^\infty e^{-\gamma t - i\omega t} \left( k \cdot \grad_\eta \widehat{f^0}(kt) + \partial_t \left(tk \cdot \grad_\eta \widehat{f^0}(kt)\right) \right) \dee t}  \\
& \lesssim_\gamma \frac{1}{\abs{\omega}^2}, 
\end{align*}
which completes the desired estimates. 
\end{proof}

When definition \ref{def:Penrose} is satisfied and Lemma \ref{lem:Ktail}, we can directly apply Lemma \ref{lem:VoltPen} to obtain that 
\begin{align*}
\rho(t,k) = H(t,k) + \int_0^t R(t-\tau,k) H(\tau,k) d\tau, 
\end{align*}
where for any $\gamma > -\delta_0 \abs{k}$, the resolution kernel is given by
\begin{align*}
R(t,k) = \int_{\gamma - i \infty}^{\gamma + i\infty} e^{zt}  \frac{\widetilde{\mathcal{K}}}{1 - \widetilde{\mathcal{K}}} dz. 
\end{align*}
One can then check that 
\begin{align*}
\abs{R(t,k)} & = e^{-\delta \abs{k} t} \abs{\int_{-\delta\abs{k} - i \infty}^{-\delta \abs{k} + i\infty} e^{i\omega t}  \frac{\widetilde{\mathcal{K}}}{1 - \widetilde{\mathcal{K}}} d\omega } \\
& \lesssim_\kappa \frac{1}{\abs{k}} e^{-\delta \abs{k} t}.
\end{align*}
To obtain \eqref{ineq:LDE} we observe that, e.g. 
\begin{align*}
\brak{kt}^\sigma \abs{\hat{\rho}(t,k)} & \leq \brak{kt}^\sigma \abs{H(t,k)} + \int_0^t \brak{kt}^\sigma \abs{R(t-\tau,k)} \abs{H(\tau,k)} d\tau \\
& \lesssim \brak{kt}^\sigma \abs{H(t,k)} + \int_0^t \brak{k(t-\tau)}^\sigma \abs{R(t-\tau,k)} \brak{k \tau}^\sigma \abs{H(\tau,k)} d\tau, 
\end{align*}
and hence by the estimates available on $R$ we have 
\begin{align*}
\norm{\abs{k}^{1/2}\brak{kt}^\sigma \rho(t)}_{L^2_t L^2_k} \lesssim \norm{\abs{k}^{1/2}\brak{kt}^\sigma H}_{L^2_t L^2_k}, 
\end{align*} 
and by Theorem \ref{thm:FKTLD} we have for $m > (d-1)/2$ an integer, 
\begin{align*}
\norm{\abs{k}^{1/2}\brak{kt}^\sigma H}_{L^2_t L^2_k} \lesssim \norm{ \brak{v}^m \brak{\grad_{v}}^\sigma g_{in}}_{L^2_{x,v}}. 
\end{align*}
This proves the claimed Landau damping estimate \eqref{ineq:LDE}. 

Finally we want to turn to the scattering estimate \eqref{ineq:scat}.
First note that by integrating the linearized Vlasov-Poisson equations we have 
\begin{align*}
f(t,z,v) = g_{in}(z,v) - \int_0^t E(\tau,z + \tau v) \cdot \grad_v f^0(v) d\tau. 
\end{align*}
We have by the Landau damping estimates that the latter integral is absolutely convergent as $t \to \infty$ in $H^{\sigma-1}$ with the velocity weights
\begin{align*}
  \int_0^\infty \norm{\brak{v}^J \brak{\grad_{x,v}}^{\sigma-1} E(\tau,z + \tau v) \cdot \grad_v f^0(v) }_{L^2_{z,v}} d\tau  &\lesssim \int_0^\infty \norm{\brak{\grad_x,\tau \grad_x}^{\sigma-1} E(\tau)}_{L^2_x} d\tau \\
  & \lesssim \left(\int_0^\infty \norm{\brak{\grad_x,\tau \grad_x}^{\sigma} E(\tau)}_{L^2_x}^2 d\tau\right)^{1/2} \\
  & \lesssim \norm{\brak{v}^m \brak{\grad_x,\grad_v}^\sigma g_{in}}_{L^2_{x,v}}, 
\end{align*}
and so we simply define
\begin{align*}
f_\infty = g_{in}(z,v) - \int_0^\infty E(\tau,z + \tau v) \cdot \grad_v f^0(v) d\tau, 
\end{align*}
and observe that
\begin{align*}
  \norm{\brak{v}^J \brak{\grad}^{\sigma-1} (f-f_\infty) }_{L^2_{x,v}} & \lesssim \int_t^\infty \norm{\brak{v}^J \brak{\grad_{x,v}}^{\sigma-1} E(\tau,z + \tau v) \cdot \grad_v f^0(v) }_{L^2_{z,v}} d\tau\\
  & \to 0. 
\end{align*}
The proof above shows that an explicit rate can easily be obtained as well depending on what is assumed on the initial data and what norm one wants the ``scattering'' in. 
\end{proof} 

\section{Landau damping in (nonlinear) Vlasov--Poisson: statement and heuristics}

The next natural question is whether or not Landau damping holds for the \emph{nonlinear} Vlasov-Poisson equations.
There was some discussion in the physics literature about whether this should actually be expected; see discussions in e.g. \cite{MouhotVillani11} and \cite{B17} for references and other comments on this point.
Even after experiments were done which agreed with Landau's predictions by Malmberg and Wharton \cite{MalmbergWharton64} in 1964, physicists still raised certain objections to the idea that Landau damping should be observed in the nonlinear equations, some of these objections were somewhat misguided and some of them were very solid; see discussions in \cite{Stix,Ryutov99,MouhotVillani11}. 
Three predictions appeared: 
\begin{enumerate}
\item The linearized equations correctly predict the dynamics of small perturbations in the nonlinear Vlasov--Poisson equations for all time; 
\item The linearized equations correctly predict the dynamics of small perturbations in the nonlinear Vlasov--Poisson equations until time-scale $\approx \eps^{-1}$, where $\approx \eps$ is the size of the initial perturbation;
\item The linearized equations only correctly predict the dynamics of small perturbations in the plasma for all time if collisions are taken into account. 
\end{enumerate} 
It turns out that \emph{all of these predictions are correct -- under slightly different hypotheses on the initial data}.
The physicists actually also seemed to miss another possibility, likely because Landau damping had already been observed in experiments: that the linearized predictions do not agree at all with the nonlinear dynamics for any amount of time. 
The  detail is that the dynamics heavily depend on the norm in which one measures the size of the perturbation, i.e. what we mean by ``small perturbation''. This is not a qualitative well-posedness thing, it is purely quantitative. If you like, you can restrict your attention entirely to solutions which are analytic and the theorems -- neither their proofs nor their statements -- would change.
There is also ambiguity to the statement ``correctly predict'' and the answer could depend on the meaning of this as well.
For example, if one has $H^{100}$ initial data but the electric field only Landau damping as $\brak{t}^{-3}$, then while there is Landau damping and scattering to the free transport equation, it is debatable whether or not one could say the linear approximation was ``good'' as it is off by many orders of magnitude for large $t$.
For the purposes of this note, I will generally prefer relatively stringent requirements for ``correctly predict'', for example,  if the initial data is $H^\sigma$, I only want to lose a small number of powers in the decay independent of $\sigma$, i.e. we want to prove something like $\brak{kt}^{-\sigma + 4}$. 

\subsection{Plasma echoes: nonlinearity meets the Orr mechanism} 
Let us consider the possibility of proving Landau damping in the nonlinear equations that is consistent with the linearized equations. 
In what follows, denote the free transport profile
\begin{align*}
f(t,z,v) = g(t,z + tv, v). 
\end{align*}
If $g$ is to behave similar to the linearized equations, then $f$ would be uniformly bounded in, for example, $H^\sigma_m$,
and hence
\begin{align*}
\hat{\rho}(t,k) = (2\pi)^d \hat{f}(t,k,kt), 
\end{align*}
would be Landau damping like $\mathcal{O}(\brak{kt}^{-\sigma})$ as well.

Let us see if these estimates are consistent with the nonlinear Vlasov equations by attempting to get matching estimates on the density from the nonlinear PDE itself. 
We compute the density by integrating the (nonlinear) Vlasov equations in a manner similar to how we treated the linearized Vlasov equations: 
\begin{align*}
\hat{\rho}(t,k) & = \int e^{-ik \cdot v t} \widehat{g_{in}}(k,v) dv - \sum_{\ell \neq 0} \int_0^t \widehat{E}(\tau,\ell) \cdot \int e^{-ik \cdot v(t-\tau)} \grad_v \widehat{g}(t,k-\ell,v) dv d\tau \\ & \quad - \int_0^t \widehat{E}(\tau,k) \int e^{-ik \cdot v (t-\tau)} \grad_v f^0(v) dv d\tau. 
\end{align*}
Notice that we have as above (I will drop the dangling powers of $(2\pi)^d$ below to keep the formulas from getting too cluttered)
\begin{align*}
\int e^{-ik \cdot v t} \widehat{g_{in}}(k,v) dv = \widehat{g_{in}}(k,kt) 
\end{align*}
and
\begin{align*}
\int e^{-ik \cdot v (t-\tau)} \grad_v f^0(v) dv  & = ik(t-\tau)\widehat{f^0}(k(t-\tau)) \\ 
\int e^{-ik \cdot v(t-\tau)} \grad_v \hat{g}(t,k-\ell,v) dv & = \int -ik (t-\tau) e^{-ik \cdot v(t-\tau)} \hat{g}(t,k-\ell,v) dv \\ 
& = \int -ik (t-\tau) e^{-ik \cdot v(t-\tau) - i(k-\ell) \cdot v \tau} \hat{f}(t,k-\ell,v) dv \\ & = -i k (t-\tau) \hat{f}(t,k-\ell, kt - \ell \tau). 
\end{align*}
From Theorem \ref{thm:LVP} on the linearized Vlasov-Poisson equations, we therefore have the following formula for the density
\begin{align*}
\hat{\rho}(t,k) = H(t,k) + \int_0^t R(t-\tau,k) H(\tau,k) d\tau, 
\end{align*}
with the nonlinear `source'
\begin{align*}
H(t,k) = \widehat{g_{in}}(k,kt) + \sum_{\ell \in \mathbb Z^d_\ast} \int_0^t \hat{\rho}(\tau,\ell) \frac{\ell \cdot k}{\abs{\ell}^2}(t-\tau) \widehat{f}(\tau,k-\ell, kt-\ell \tau) d\tau,
\end{align*}
and there is some $\gamma > 0$ such that
\begin{align*}
\abs{R(t,k)} \lesssim \frac{1}{\abs{k}}e^{-\gamma \abs{tk}}. 
\end{align*}
Due to the strong estimates we have on $R$, it is enough to get good estimates on $H$, so let us just consider the toy problem
\begin{align*}
\hat{\rho}(t,k) = Easy  + \sum_{\ell \in \mathbb Z^d_\ast} \int_0^t \hat{\rho}(\tau,\ell) \frac{\ell \cdot k}{\abs{\ell}^2}(t-\tau) \widehat{f}(\tau,k-\ell, kt-\ell \tau) d\tau.
\end{align*}
This is a coupled system of Volterra equations which couples all the frequencies in the density together.
We can also reduce our consideration to 1d, as it turns out this already captures basically the worst-case-scenario.
Using that, due to the regularity of $f$, $\hat{f}$ should be very well localized in both of its arguments, let's just make some further simplifications, trying to focus on the worst contributions: 
\begin{align*}
\hat{\rho}(t,k) \lesssim Easy  + \eps \sum_{\ell = k \pm 1} \int_0^t \hat{\rho}(\tau,\ell) \frac{\tau}{\abs{\ell}} \frac{1}{\brak{kt - \ell \tau}^{100}} d\tau,
\end{align*}
where note that I restricted the summation to $\ell = k \pm 1$, as this should be the worst case. 
The integral is strongly focused where $kt \approx \ell \tau$, which can only happen if $\ell = k+1$ since $\tau < t$. 
Let's make the following further approximation
\begin{align*}
\hat{\rho}(t,k) & \lesssim Easy  + \eps \hat{\rho}(\frac{kt}{k+1},k+1) \int_0^t \frac{\tau}{\abs{k+1}} \frac{1}{\brak{kt - (k+1) \tau}^{100}} d\tau \\ 
& \lesssim Easy  + \frac{ \eps t}{\abs{k}^2} \hat{\rho}(\frac{kt}{k+1},k+1). 
\end{align*}
We see that there isn't really any amplification for $t \ll \eps^{-1}$, and so maybe we should expect Landau damping until $O(\eps^{-1})$ in $H^\sigma_m$, at least for $\sigma,m$ large enough. 
For longer times however, the density at time $\tau = \frac{kt}{k+1}$ and frequency $k+1$ is strongly amplifying $\rho(t,k)$.
This is a \emph{plasma echo}, a weakly nonlinear effect first captured in the laboratory in 1968 by Malmberg, Wharton, Gould and O'Neil \cite{MalmbergWharton68}. 
It is a kind of resonance which is an interplay between the transient growth of the Orr mechanism and the nonlinearity (see \cite{Trefethen2005} for some general discussions on related ``pseudo-resonances'' associated with transient growth in linear operators). 
A repeated chain of them creates a kind of high-to-low frequency cascade in the density, i.e. $k+1 \mapsto k \mapsto k-1 \mapsto \ldots $.
One can also see it as a low-to-high frequency cascade on the profile $f$.
If we recall how the density and profile are related, we get
\begin{align*}
\hat{f}(t,k,kt) & \lesssim  Easy  + \eps \hat{f}(\frac{kt}{k+1},k+1, kt ) \frac{kt}{\abs{k}^3}. 
\end{align*}
Hence, each echo is happening actually at the \emph{same} velocity frequency.
So we let $\eta = kt$, and we have
\begin{align*}
\hat{f}(t,k,\eta) & \lesssim  Easy  + \eps \hat{f}(\frac{\eta}{k+1},k+1, \eta) \frac{\eta}{\abs{k}^3}.  
\end{align*}
This amplification could be true for all $k$ such that $\eps \eta \gtrsim \abs{k}^3$, hence at all of the \emph{Orr critical times} $\eta = \ell t$, the $\ell$'th mode in $f$ goes through its critical time and then strongly forces the $\ell - 1$'st mode in $f$.
In particular, the $\ell$'th mode could apply an additional growth factor of
\begin{align*}
\approx \frac{\eps \eta}{\abs{\ell}^3}. 
\end{align*}
This process starts at the $x$-frequencies which satisfy $\ell^3 \approx \eps \eta$, and continues until $\ell = 1$, at which point the chain stops. 
Hence, if we let $N = \mathrm{floor}((\epsilon \eta)^{1/3})$, at time $t \approx \eta$ the final total amplification is (assuming that this toy model is faithful as a good estimate of the ``worst-case'' behavior) 
\begin{align*}
\hat{f}(\eta, 1, \eta) & \approx \left( \frac{C \eps \eta}{N^3} \right)\left( \frac{C \eps \eta}{(N-1)^3} \right) \ldots \left( \frac{C \eps \eta}{1^3} \right) \hat{f}(0, N, \eta) \\
& \approx \frac{(C \eps \eta)^N}{(N!)^3} \hat{f}(0, N, \eta). 
\end{align*}
By Stirling's formula one can show that this amplification factor comes out to about the following, for some universal $\mu > 0$, 
\begin{align*}
\hat{f}(\eta, 1, \eta) \approx \brak{\eps \eta}^{-\mu} e^{3 (C\eps \eta)^{1/3}} \hat{f}(0, N, \eta). 
\end{align*}
If this toy model is a reasonable estimate of the worst-case dynamics, then this suggests that unless the Fourier transform decays at least as fast as $e^{-K \eps^{1/3 } \eta^{1/3}}$ for some sufficiently large $K$ at time zero, then the Landau damping predicted by the linearized equation is \emph{not} consistent with the nonlinear equations for times $t \gtrsim \eps^{-1}$.
This motivates the use of \emph{Gevrey regularity norms}: for $s \in (0,1]$
\begin{align*}
\norm{f}_{\mathcal{G}^{\lambda;s}}^2 := \sum_{k}\int_{\mathbb R^n} e^{ \lambda \brak{k,\eta}^s} \abs{\hat{f}(k,\eta)}^2 d\eta. 
\end{align*}
When $s=1$ this is a way of quantifying analytic regularity. However for $0 < s < 1$, these regularities span a range between $C^\infty$ and analytic (as we will see, in some ways they are kind of analogous to the H\"older classes spanning the gap between $C$ and $C^1$ -- these endpoints can be unwieldy to work with but the intermediate H\"older class regularities are very nice to work in. So it is also with Gevrey regularity.).   
That $s = 1/3$ should be the critical Gevrey regularity for Landau damping in the Vlasov--Poisson equations was originally conjectured by Mouhot and Villani \cite{MouhotVillani11} using a heuristic which was similar to, though not exactly the same as, the one I just outlined.
\begin{remark}
One can see that the above heuristic can be adapted to more general mean field interactions.
For example, if
\begin{align*}
\abs{\widehat{W}(k)} \lesssim \frac{1}{\abs{k}^{\gamma}}, 
\end{align*}
then $s=1/3$ is replaced with $s = \frac{1}{1+\gamma}$.
All of the results we detail below adapt to this case with the corresponding `critical' Gevrey regularity but for simplicity we have kept the exposition with Vlasov--Poisson.
Interestingly, if $\widehat{W}$ is supported in only finitely many Fourier modes, the above heuristic suggests no significant regularity loss (as the echo chains are bounded in length) and so it turns out global-in-time Landau damping results are possible in Sobolev spaces for such models, for example the Vlasov-HMF model \cite{faou2016landau}.  
\end{remark}

\subsection{Landau damping in the nonlinear Vlasov--Poisson equations} 

The existence of Landau damping solutions in the Vlasov-Poisson equations was first proved by Caglioti and Maffei \cite{CagliotiMaffei98} (see also \cite{HwangVelazquez09}), who constructed analytic solutions which Landau damp to Maxwellians as $t \to \infty$ consistent with the linearized predictions. More specifically, their result is analogous to proving the \emph{injectivity} of wave operators in dispersive equations \cite{TaoBook}, i.e. proving that for the nonlinear Schr\"odinger equation $i u_t + \Delta u = \pm\abs{u}^p u$, for all $u_+ \in H^s$ there is a solution to NLS defined on $[T,\infty)$ such that $\lim_{t\to \infty}\norm{e^{-it\Delta} u(t) - u_+}_{H^s} = 0$. 
A significant `negative' work was that of Lin and Zeng \cite{LZ11b}, where they showed that there exist analytic stationary solutions of the Vlasov--Poisson equations which have non-trivial $x$ dependence (and so do not Landau damp at all) which are arbitrarily close to equilibrium in $H^s$ for $s < 3/2$, i.e. $t_\ast = 0$. These equilibria are special cases of a class of solitons called ``BGK waves'' \cite{BGK57}.
This means that in such low regularity, the linearized problem and nonlinear problem do not agree at all for any amount of time.
These BGK waves also pose a potential obstruction to damping in higher regularity, as one has linear instability in all $H^s$ for $s > 0$ and so the solution will not stay in a small neighborhood of the homogeneous equilibrium in any $H^s$ spaces in general.  

A major breakthrough came when Mouhot and Villani \cite{MouhotVillani11} proved that all sufficiently small analytic or Gevrey with $s$ very close to $1$ initial data leads to Landau damping solutions with essentially the same estimates as in Theorem \ref{thm:LVP}. 
The original proof of Landau damping was done with a heavy use of Lagrangian trajectories and an explicit Newton iteration on the distribution function.
It was revolutionary and contained many brilliant insights, but it turns out it was also more complicated and technical than is ultimately needed to prove the desired result. 
A significantly simpler proof was given by Masmoudi, Mouhot and myself in \cite{BMM13}, where we also proved it down to the Gevrey threshold $s > 1/3$ conjectured in \cite{MouhotVillani11}. 
The proof of Grenier, Rodnianski, and Nguyen given in the recent papers \cite{GNR20,GN21} is the same general method as used in \cite{BMM13} but with additional improvements and simplifications that I will incorporate below. 
Nevertheless, it is my feeling that both \cite{BMM13} and \cite{GNR20,GN21} make the proof look a little harder than it actually is. 
\begin{theorem} \label{thm:MV}
Let $f^0$ satisfy \eqref{ineq:regf0} and satisfy the Penrose stability condition (Definition \ref{def:Penrose}). 
Let $s \in (1/3,1]$, $\lambda_1 > \lambda_2 > 0$, and $m > d/2$ be an integer. Then $\exists \eps_0$ such that if $\eps \leq \eps_0$ and 
\begin{align*}
\norm{\brak{v}^m e^{\lambda_1 \brak{\grad}^s} \brak{\grad}^\sigma g_{in}}_{L^2} = \eps, 
\end{align*}
then, $\exists f_\infty$ such that if we define $f(t,z,v) = g(t,z+tv,v)$ there holds the scattering to free transport 
\begin{align*}
\norm{\brak{v}^m e^{\lambda_2 \brak{\grad}^s} (f - f_\infty)}_{L^2} \lesssim \eps e^{-\frac{\lambda_1 - \lambda_2}{2}\brak{t}^s}
\end{align*}
and the rapid Landau damping
\begin{align*}
\norm{e^{\lambda_2\brak{\grad_x, t \grad_x}^s} E(t)}_{L^2_x} \lesssim \eps. 
\end{align*}
\end{theorem}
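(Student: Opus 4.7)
The plan is to bootstrap a time-dependent Gevrey-$s$ norm on the profile $f(t,z,v) = g(t,z+tv,v)$, which satisfies
\begin{align*}
\partial_t f + E(t,z+tv) \cdot (\grad_v - t\grad_z)(f^0 + f) = 0.
\end{align*}
Introduce a radius $\lambda(t)$ that decreases from $\lambda_1$ at $t=0$ to some $\lambda_\infty \in (\lambda_2, \lambda_1)$ at a rate $\dot\lambda(t) \sim -\brak{t}^{s-1}$, and work with the norm
\begin{align*}
\norm{f(t)}_{X(t)}^2 := \sum_k \int_{\R^d} \brak{v}^{2m} e^{2\lambda(t)\brak{k,\eta}^s} \brak{k,\eta}^{2\sigma} \abs{\hat f(t,k,\eta)}^2 \dee\eta.
\end{align*}
The bootstrap hypothesis is $\norm{f(t)}_{X(t)} \leq 2\eps$ on a maximal interval $[0,T^\ast)$. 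The ``regularity budget'' $\lambda_1 - \lambda_\infty$ is reserved for absorbing the loss from echo resonances; the leftover $\lambda_\infty - \lambda_2 > 0$ supplies the final Gevrey-$s$ statement.

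Under the bootstrap, I would derive exactly the nonlinear Volterra equation computed in the heuristic discussion above: $\hat\rho = H + \mathcal{K}\star\hat\rho$ with the \emph{same} linear kernel $\mathcal{K}(t,k) = -t\widehat{f^0}(kt)$ as in Theorem~\ref{thm:LVP}, and nonlinear source
\begin{align*}
H(t,k) = \widehat{g_{in}}(k,kt) + \sum_{\ell \neq 0}\int_0^t \hat\rho(\tau,\ell)\frac{\ell\cdot k}{\abs{\ell}^2}(t-\tau)\hat f(\tau,k-\ell,kt-\ell\tau)\dee\tau.
\end{align*}
Because the Penrose condition gives the same resolvent bound $\abs{R(t,k)}\lesssim \abs{k}^{-1}e^{-\gamma\abs{kt}}$ as in the linear theorem, controlling $\hat\rho$ in the $\lambda(t)$-weighted Gevrey norm along the ray $\eta = kt$ reduces, via a Young-type estimate in time, to controlling the same norm of $H$. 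The free-transport contribution $\widehat{g_{in}}(k,kt)$ is handled by Theorem~\ref{thm:FKTLD} adapted to the Gevrey weight (one just checks $\lambda(t)\brak{k,kt}^s \leq \lambda_1\brak{k,\eta}^s$ on the relevant set, so the initial-data regularity suffices).

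The crux, and the main obstacle, is the reaction integral. Using the Gevrey localization of $\hat f$, the argument $kt-\ell\tau$ is essentially forced to be small, which concentrates the integral on a narrow window around the critical time $\tau \sim (\abs{k}/\abs{\ell}) t$ -- precisely a plasma echo. On such a window the key trilinear Gevrey inequality (schematically)
\begin{align*}
\lambda(t)\brak{k,kt}^s \leq \lambda(\tau)\brak{\ell,\ell\tau}^s + \lambda(\tau)\brak{k-\ell,kt-\ell\tau}^s + (\lambda(\tau)-\lambda(t))\brak{k,kt}^s
\end{align*}
lets one redistribute the Gevrey weight onto $\hat\rho(\tau,\ell)$ and $\hat f(\tau,k-\ell,\cdot)$, at the cost of a quantitative ``regularity loss'' factor coming from the shrinking $\lambda(t)$. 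A careful bookkeeping of this factor against the algebraic gain $(t-\tau)/\abs{\ell}^2$ and the length of the echo window reproduces exactly the heuristic amplification $\exp(C(\eps\brak{\eta})^{1/3})$ of the full echo chain, which is absorbed by the Gevrey loss \emph{precisely when $s > 1/3$}. After summing in $\ell$ and $k$ and taking $L^2_t$, one closes the bootstrap by shrinking $\eps_0$.

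Once uniform control of $\norm{f(t)}_{X(t)}$ and the Landau damping of $E$ in Gevrey-$s$ are in hand, scattering to $f_\infty$ follows by the same Duhamel argument as in the linear proof: the right-hand side of the profile equation is absolutely $L^1_t$-integrable in the Gevrey-$\lambda_2$ norm with velocity weight $\brak{v}^m$, since $\norm{e^{\lambda_2\brak{\grad_x,t\grad_x}^s} E(t)}_{L^2_x}\lesssim \eps e^{-\frac{\lambda_1-\lambda_2}{2}\brak{t}^s}$. Thus
\begin{align*}
f_\infty := g_{in} - \int_0^\infty E(\tau,z+\tau v)\cdot(\grad_v - \tau\grad_z)(f^0 + f)(\tau)\,\dee\tau
\end{align*}
exists with the claimed convergence rate. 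The whole argument hinges on the reaction estimate of the third paragraph; everything else is a controlled variation on the linear theorem.
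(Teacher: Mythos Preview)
Your high-level strategy matches the paper's: profile, time-decreasing Gevrey radius, Volterra equation for $\rho$ with the same linear kernel and a nonlinear source $H$, Penrose resolvent bound, Gevrey paraproduct estimates on $H$ that close for $s>1/3$, then scattering by Duhamel. Two concrete points need correction, though.

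First, the rate $\dot\lambda(t)\sim -\brak{t}^{s-1}$ is not integrable for any $s\in(0,1)$, so your $\lambda(t)$ drifts to $-\infty$ rather than to $\lambda_\infty$. The paper takes $\lambda(t)=\lambda_\infty+\delta\brak{t}^{-a}$ with $a>0$ \emph{small}. What actually matters is not $\dot\lambda$ per se but the gap
\[
\lambda(\tau)-\lambda(t)\gtrsim \frac{|t-\tau|}{\brak{\tau}^{a}\brak{t}},
\]
which near an echo time $\tau\approx kt/\ell$ yields $\gtrsim \brak{\tau}^{-a}/|\ell|$. This is what is played against the algebraic loss $\tau/|\ell|$ in the reaction kernel, and the constraint $s>1/3$ appears precisely as $1+a\le 3s$ in the Schur bound.

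Second, bootstrapping only $\norm{f(t)}_{X(t)}$ is not enough to close. The paper runs a \emph{coupled} bootstrap on two quantities: a time-weighted density norm $\norm{\brak{t}^{b}A\rho}_{L^2_tL^2_x}$ (with $b>4$) and the uniform-in-time profile norm $\sup_t\norm{\brak{v}^m\brak{\grad}Af}_{L^2}$. The extra $\brak{t}^{b}$ on $\rho$ (a device from Grenier--Nguyen) is exactly what makes $\brak{t}^{2}\norm{A\rho}_{L^2_x}\in L^1_t$, which is needed to absorb the $t$-loss from the transport term $E\cdot t\grad_z f$ in the profile energy estimate. From the pointwise restriction $\hat\rho(t,k)=\hat f(t,k,kt)$ implied by your single bootstrap you only get $\norm{A\rho(t)}_{L^2_x}\lesssim\eps$ with no additional time decay, and the profile estimate does not close.

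A smaller remark: the reaction term is not handled in the paper by summing the heuristic echo amplification $e^{C(\eps\eta)^{1/3}}$. After an HL/LH/HH frequency split, the dangerous HL piece is reduced by Schur's test to bounding an explicit kernel $K(t,\tau,k,\ell)$; the estimate $\sup_{t,k}\sum_\ell\int_0^t K\,d\tau\lesssim 1$ is the single place Gevrey-$s$ with $s>1/3$ is genuinely required.
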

\begin{remark}
One can extend this theorem to the case of two species as in Remark \ref{rmk:twospec}.  
\end{remark}
An easier result to prove is that the linear and nonlinear predictions agree in $H^\sigma$, for $\sigma$ sufficiently large, for at least $t \gtrsim \eps^{-1}$.
Let me discuss instead a different possible refinement, which emphasizes that the high regularity requirement can likely be made less stringent than it appears.
In $d=1$ it was proved in \cite{B20} that the theorem extends to the case $s=1/3$ and $\lambda_1 > \lambda_2 > K_0 \eps^{1/3}$ for some large fixed $K_0$ provided we use the norm 
\begin{align*}
\norm{\brak{v}^m e^{\lambda_1 \brak{\grad}^s} \brak{\grad}^\sigma g_{in}}_{L^2} = \eps, 
\end{align*}
for some sufficiently large $\sigma$, and one accordingly adjusts the Landau damping to incorporate the $\brak{kt}^{-\sigma}$ damping.  
Up to the exact value of $K_0$, this is the minimal regularity predicted by the toy model heuristics in the previous subsection. 
Unfortunately, the proof of this refinement is significantly more involved than that of Theorem \ref{thm:MV} (at least for now).
However, the proof of Theorem \ref{thm:MV} that I will give below  does in fact yield a corresponding result for $\lambda_1 > \lambda_2 \gtrsim \eps^\alpha$ for some $0 < \alpha \ll s$ (provided again that one includes a $\brak{\grad}^\sigma$ to the norms), though the sharp answer is almost certainly $\eps^s$. 

I will state the theorem rigorously just for the case $s=1/3$ and $d=1$. 
\begin{theorem}[JB `16 \cite{B20} ]
Let $f^0$ satisfy \eqref{ineq:regf0} and satisfy the Penrose stability condition (Definition \ref{def:Penrose}).
Let $d=1$,  $m > d/2$ be an integer, and $\sigma$ be sufficiently large. 
Then,  $\exists \eps_0, K_0, > 0$ such that if $\eps \leq \eps_0$ and $\lambda_1 > \lambda_2 > K_0 \eps^{1/3}$ and 
\begin{align*}
\norm{\brak{v}^m e^{\lambda_1 \brak{\grad}^{1/3}} \brak{\grad}^\sigma g_{in}}_{L^2} = \eps \leq \eps_0,  
\end{align*}
then, $\exists f_\infty$ such that if we define $f(t,z,v) = g(t,z+tv,v)$ there holds the scattering to free transport 
\begin{align*}
\norm{\brak{v}^m e^{\lambda_2 \brak{\grad}^{1/3}} \brak{\grad}^{\sigma-10} (f - f_\infty)}_{L^2} \lesssim \eps \brak{t}^{-\sigma+5} e^{-\frac{\lambda_1 - \lambda_2}{2}\brak{t}^s}
\end{align*}
and the rapid Landau damping
\begin{align*}
\norm{e^{\lambda_2\brak{\grad_x, t \grad_x}^s} \brak{\grad_x,\grad_x t}^{\sigma-4} E(t)}_{L^2_x} \lesssim \eps.  
\end{align*}
\end{theorem}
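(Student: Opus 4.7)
My plan is to prove this theorem by a bootstrap argument on Gevrey-weighted norms of both the scattering profile $f(t,z,v) = g(t,z+tv,v)$ and the density $\rho$, following the framework suggested by Theorem \ref{thm:LVP} and the echo heuristic of the preceding subsection. First, derive the profile equation $\partial_t f + E(t,z+tv)\cdot(\grad_v - t\grad_z) f = -E(t,z+tv)\cdot \grad_v f^0(v)$. Integrating the Vlasov equation along characteristics of free transport, exactly as in the linearized case, yields the Volterra identity $\hat{\rho}(t,k) = H(t,k) + \int_0^t R(t-\tau,k) H(\tau,k)\,d\tau$ with the same exponentially decaying resolvent $R$ supplied by Theorem \ref{thm:LVP}, but now with a nonlinear source
\begin{align*}
H(t,k) = \widehat{g_{in}}(k,kt) + \sum_{\ell \neq 0} \int_0^t \hat{\rho}(\tau,\ell)\,\frac{\ell\cdot k}{\abs{\ell}^2}(t-\tau)\,\hat{f}(\tau, k-\ell, kt - \ell\tau)\,d\tau.
\end{align*}

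Introduce a time-dependent Gevrey radius $\lambda(t)$ decreasing monotonically from just below $\lambda_1$ toward $\lambda_2$, with $-\dot\lambda(t)$ tuned to dominate the losses incurred by the echo cascade while $\lambda(t) > \lambda_2 > K_0\eps^{1/3}$ throughout. The bootstrap hypotheses will be (i) the profile bound $\norm{\brak{v}^m e^{\lambda(t)\brak{\grad}^{1/3}}\brak{\grad}^\sigma f(t)}_{L^2} \leq C_1\eps$, and (ii) the space-time Landau damping estimate $\norm{\abs{k}^{1/2}\brak{k,kt}^\sigma e^{\lambda(t)\brak{k,kt}^{1/3}}\hat{\rho}}_{L^2_t \ell^2_k} \leq C_2\eps$. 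Given (i), Theorem \ref{thm:FKTLD} controls the free-streaming contribution to $H$, and a Gevrey energy estimate on the profile equation, using paraproduct decomposition on the transport term $E(t,z+tv)\cdot(\grad_v - t\grad_z) f$, closes (i) by absorbing the top-order reaction contributions against the $-\dot\lambda$ dissipation arising from time-differentiating the weight.

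The main obstacle, by a wide margin, is closing (ii), namely bounding the nonlinear echo term in $H$ in the space-time Gevrey norm. Using the profile bootstrap, the factor $\hat{f}(\tau,k-\ell,kt-\ell\tau)$ carries the Gevrey weight $e^{-\lambda(\tau)\brak{k-\ell,kt-\ell\tau}^{1/3}}$ and is therefore sharply localized near the critical time $\tau_c(k,\ell) = kt/\ell$. I would split the $\tau$-integral into resonant and non-resonant windows about $\tau_c$: outside a window of width $\sim \abs{\ell}/\brak{kt}^{1/3}$ the Gevrey decay easily beats the $(t-\tau)$ growth, while inside the resonant window one applies Cauchy--Schwarz in $\tau$ against the $L^2_t$ density norm from (ii). Summing over $\ell$ and iterating along the echo chain $k\mapsto k\pm 1 \mapsto\cdots$ produces the cumulative amplification
\begin{align*}
\prod_{1 \leq \ell \leq N}\!\left(1 + \frac{C\eps\brak{kt}}{\ell^3}\right) \,\sim\, e^{3(C\eps\brak{kt})^{1/3}}, \qquad N \sim (\eps\brak{kt})^{1/3},
\end{align*}
exactly as predicted by the heuristic, via Stirling. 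The index $s = 1/3$ is critical precisely because this growth is absorbable into the weight decrement $e^{-(\lambda_1 - \lambda(t))\brak{kt}^{1/3}}$ provided $K_0$ is chosen large enough that $K_0\eps^{1/3}$ exceeds $3(C\eps)^{1/3}$ with room to spare. The polynomial prefactor $\brak{\grad}^\sigma$ supplies the slack needed to absorb the algebraic corrections from Stirling, the logarithmic losses in the echo sum, and the polynomial loss inherent in convolution against $R$; this is the origin of the $\brak{t}^{-\sigma+5}$ and $\brak{\grad_x,\grad_x t}^{\sigma-4}$ losses in the final statement. Once both bootstrap inequalities close with constants strictly smaller than $C_1, C_2$, the scattering statement follows by integrating the $E\cdot \grad_v f^0$ source of the profile equation to infinity and defining $f_\infty$ as the resulting limit.
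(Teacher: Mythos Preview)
The paper does not prove this theorem; it is quoted from \cite{B20} and the surrounding text explicitly says ``the proof of this refinement is significantly more involved than that of Theorem \ref{thm:MV}''. The proof given in Section 4 is only for the subcritical case $s > 1/3$, and the paper notes that its method yields at best $\lambda \gtrsim \eps^\alpha$ for some $0 < \alpha \ll s$, not the sharp $\eps^{1/3}$.

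Your proposal is essentially the strategy of Section~4 with an added claim that the echo amplification $\sim e^{3(C\eps\eta)^{1/3}}$ can be absorbed into the weight decrement $\lambda_1 - \lambda(t)$. This is where the gap lies. In the density estimate (Lemma \ref{lem:HBoot}), the decisive step is the $T_{HL}$ kernel bound $\sum_{\ell}\mathcal{R}_\ell \lesssim 1$, and the paper shows this holds precisely when $s > 1/3$: the resulting power of $\ell$ is $\frac{1-s}{s-a} - 1$, which is borderline at $s=1/3$ and the sum diverges. A time-dependent radius $\lambda(t) = \lambda_\infty + \delta\brak{t}^{-a}$ cannot repair this: the gain from $\lambda(\tau) - \lambda(t)$ is of order $\brak{\tau}^{-a}\abs{t-\tau}\brak{t}^{-1}\brak{k,kt}^{1/3}$, and any $a > 0$ makes this too weak at the critical times $\tau \sim kt/\ell$, while $a = 0$ leaves nothing to close the profile energy estimate against. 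Your sentence ``summing over $\ell$ and iterating along the echo chain'' is precisely the step that does not go through with this weight.

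The actual argument in \cite{B20} replaces the simple multiplier $e^{\lambda(t)\brak{k,\eta}^{1/3}}\brak{k,\eta}^\sigma$ by one built from the solution of the echo toy model itself, so that the weight already encodes, frequency-by-frequency and time-by-time, the cascade growth $\prod_{\ell}(C\eps\eta/\ell^3)$. The hypothesis $\lambda_2 > K_0\eps^{1/3}$ then guarantees that this toy-model weight is dominated by the available Gevrey budget, but it does not by itself make a na\"ive bootstrap close. Your resonant-window/Stirling sketch captures the heuristic from Section~3.1 correctly, but turning it into an honest estimate is exactly what makes the proof ``significantly more involved''.
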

As stated before, the proof of Theorem \ref{thm:MV} we give below provides an analogous result in $d \geq 2$ for $s > 1/3$, $\lambda_1 > \lambda_2 \gtrsim \eps^{\alpha}$ with $0 < \alpha \ll s$. 

Finally, let me discuss a `negative' result which aims to evaluate how accurate the toy model really is. 
In \cite{B20}, I constructed plasma echo chain solutions in $d=1$  (i.e. solutions of initial size $\mathcal{O}(\eps)$ in $H^\sigma$ with $O(\log \eps)$ plasma echoes, where the electric field was exponentially small in $\eps$ except at the critical times $\frac{t_\ast}{k}$, for $t_\ast \approx \eps^{-1} \abs{\log \eps}$, and $1 \leq k \lesssim \abs{\log \eps}$, where the electric field was only polynomial-small in $\eps$).
These solutions display a finite-time norm inflation on the profile $f$ (i.e. for any $0 < R < \sigma/100$, the plasma echoes could inflate the norm such that $\norm{f(t)}_{H^\sigma} \gtrsim \eps^{-R}$)
as well as a corresponding degradation of the Landau damping rate (i.e. $\norm{E(t_\ast)}_{L^2} \gtrsim t_\ast^{-\sigma + R}$). 
The result is more definitive in the gravitational case (see \cite{B20} for details), however, it shows that the Sobolev space analogue of Theorem \ref{thm:MV} cannot be extended for times past $t \gg \eps^{-1}$. 
If one takes $f^0 = 0$ and removes the non-negativity constraint on the distribution function, then it should be possible to extend the proof of \cite{B20} to cover all Gevrey regularity for $s < 1/3$.
However, a variety of questions remain open for general $f^0$. See Section \ref{sec:open} for more discussion.

The case of $\mathbb R^d$ for $d = 2,3$ has also seen a significant amount of activity; See Section \ref{sec:Rd} for more discussion. 

\subsection{How do you use Gevrey regularity?}

Now that we see we are doomed to work in Gevrey regularity, we should learn a little about how to work with it.
The most important thing to realize is that Gevrey regularity is \emph{much better to work with than analytic}.
This is fundamentally because of the following triangle inequality lemma (see e.g. \cite{BM13} for proofs). 
\begin{lemma} \label{lem:tri}
Let $0 \leq x, y$ and $s \in (0,1)$. Then the following triangle inequalities hold.
\begin{itemize}
\item There holds
\begin{align*}
\abs{x^s - y^s} \lesssim_s \frac{\abs{x-y}}{x^{1-s} + y^{1-s}}.
\end{align*}
\item If $\abs{x-y} \leq x/K$ for some $K > 1$ then
\begin{align*}
\abs{x^s - y^s} \leq \frac{s}{(K-1)^{1-s}}\abs{x-y}^s;
\end{align*}
note that the coefficient is $<1$ if $s^{\frac{1}{1-s}} +  1 < K$. 
\item If $y \leq x \leq Ky$ for some $K < \infty$, then
\begin{align*}
\abs{x+y}^s \leq \left(\frac{K}{1+K}\right)^{1-s} (x^s + y^s).
\end{align*}
\end{itemize} 
\end{lemma}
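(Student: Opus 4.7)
The plan is to reduce each of the three inequalities to a one-variable calculus fact by exploiting homogeneity and symmetry, and then handle each by either a mean value theorem estimate or by monotonicity of an explicit ratio.

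For the first inequality, I would assume without loss of generality that $0 \leq y \leq x$ and split into two regimes. When $y \leq x/2$, both sides are comparable to $x$: on the one hand $(x^s - y^s)(x^{1-s} + y^{1-s}) \leq x^s \cdot 2 x^{1-s} = 2x$, while on the other $x - y \geq x/2$, so the ratio is uniformly bounded. When $y > x/2$, the mean value theorem gives $x^s - y^s = s \xi^{s-1}(x-y)$ for some $\xi \in (y,x)$, and since $\xi > x/2$ we have $\xi^{s-1} \leq (x/2)^{s-1}$; combined with $x^{1-s} + y^{1-s} \leq 2 x^{1-s}$ this directly yields the desired bound with a constant depending only on $s$. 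The case split is essentially forced: the crude estimate $x^s - y^s \leq x^s$ is sharp at $y = 0$, while the mean value bound is sharp when $y \approx x$, and neither alone suffices uniformly.

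For the second inequality, the hypothesis $|x-y| \leq x/K$ forces $\min(x,y) \geq x(K-1)/K$, so $x$ and $y$ are comparable. The mean value theorem gives $|x^s - y^s| = s \xi^{s-1}|x-y|$ with $\xi \geq x(K-1)/K$, hence $\xi^{s-1} \leq (x(K-1)/K)^{s-1}$. Writing $|x-y| = |x-y|^s \cdot |x-y|^{1-s}$ and bounding the second factor by $(x/K)^{1-s}$, the powers of $x$ and $K$ cancel exactly to produce the factor $(K-1)^{-(1-s)}$, giving the stated inequality. The strict-contraction refinement follows by rearranging $s < (K-1)^{1-s}$ for $K$.

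For the third inequality, homogeneity reduces the claim to $(1+t)^s \leq \bigl(K/(K+1)\bigr)^{1-s}(1+t^s)$ for $t = y/x \in [1/K,1]$. Studying $f(t) := (1+t)^s/(1+t^s)$ on $(0,1]$, a direct differentiation yields
\[
f'(t) = \frac{s(1+t)^{s-1}\bigl(1 - t^{s-1}\bigr)}{(1+t^s)^2},
\]
which is strictly negative for $t \in (0,1)$ because $s - 1 < 0$ and $t < 1$ force $t^{s-1} > 1$. Hence $f$ is decreasing, so its maximum on $[1/K,1]$ sits at the left endpoint, where an explicit computation gives $f(1/K) = (K+1)^s/(K^s + 1)$. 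The remaining algebraic inequality $(K+1)^s/(K^s+1) \leq \bigl(K/(K+1)\bigr)^{1-s}$ rearranges to $K + 1 \leq K + K^{1-s}$, which is precisely $K \geq 1$. Overall the lemma is elementary; the only mild subtlety is the case split in the first bullet, since a single application of the mean value theorem degenerates as $y \to 0$.
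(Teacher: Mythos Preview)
Your proof is correct and complete. The paper itself does not prove this lemma; it simply refers the reader to \cite{BM13} for proofs. Your argument is self-contained and elementary: the mean-value-theorem estimates in parts one and two, and the monotonicity computation for $f(t)=(1+t)^s/(1+t^s)$ in part three, are exactly the kind of direct calculus verification one would expect for these inequalities, and all constants are tracked correctly. Nothing to add.
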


Let us see why this is so important with the following Gevrey product rule.
\begin{lemma}[The Gevrey product rule] \label{lem:Gprod}
For all $s \in (0,1)$ $\exists c \in (0,1)$ and $K > 0$ such that  $\forall \lambda > 0$ there holds
\begin{align*}
\norm{e^{\lambda \abs{\grad}^s}(fg)}_{L^2} \leq K\norm{\brak{\grad}^{d/2+} e^{c\lambda\abs{\grad}^s} f }_{L^2} \norm{e^{\lambda \abs{\grad}^s} g }_{L^2} + K\norm{\brak{\grad}^{d/2+} e^{c\lambda \abs{\grad}^s} g}_{L^2} \norm{e^{\lambda \abs{\grad}^s} f }_{L^2}. 
\end{align*}
\end{lemma}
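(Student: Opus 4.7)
The plan is to move to the Fourier side, where $\widehat{fg}(\xi) = \int \hat f(\xi-\eta)\hat g(\eta)\,d\eta$, and to exploit Lemma \ref{lem:tri} as an improved (Gevrey) triangle inequality. Fix a large number $M > 1$ (to be chosen) and split the convolution according to which of $|\xi-\eta|$ and $|\eta|$ dominates,
\begin{align*}
\widehat{fg}(\xi) = I(\xi) + II(\xi) + III(\xi),
\end{align*}
where $I$ is the integral over $\{|\eta| \leq |\xi-\eta|/M\}$, $II$ the integral over $\{|\xi-\eta| \leq |\eta|/M\}$, and $III$ the integral over the ``balanced'' region $\{|\eta|/M < |\xi-\eta| < M|\eta|\}$, in which $|\xi|$, $|\xi-\eta|$, $|\eta|$ are pairwise comparable up to a factor of $M+1$.

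In region $I$, the inequality $|\xi-\eta| \geq M|\eta|$ gives $\bigl||\xi|-|\xi-\eta|\bigr| \leq |\eta| \leq |\xi|/(M-1)$, so the second bullet of Lemma \ref{lem:tri} (with $x=|\xi|$, $y=|\xi-\eta|$) produces
\begin{align*}
|\xi|^s \leq |\xi-\eta|^s + c_1\,|\eta|^s, \qquad c_1 := \frac{s}{(M-2)^{1-s}},
\end{align*}
and $c_1<1$ as soon as $M$ is chosen large enough depending on $s\in(0,1)$. Exponentiating,
\begin{align*}
e^{\lambda|\xi|^s}|\hat f(\xi-\eta)||\hat g(\eta)| \leq \bigl(e^{\lambda|\xi-\eta|^s}|\hat f(\xi-\eta)|\bigr)\bigl(e^{c_1\lambda|\eta|^s}|\hat g(\eta)|\bigr),
\end{align*}
and Young's convolution inequality together with Cauchy--Schwarz against $\brak{\eta}^{-d/2-\epsilon}\in L^2_\eta$ yields
\begin{align*}
\norm{e^{\lambda|\xi|^s} I}_{L^2_\xi} \lesssim \norm{e^{\lambda|\grad|^s} f}_{L^2}\,\norm{\brak{\grad}^{d/2+\epsilon}e^{c_1\lambda|\grad|^s}g}_{L^2},
\end{align*}
which is precisely the second summand on the right-hand side of the lemma. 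Region $II$ is handled by the same argument with the roles of $f$ and $g$ swapped, and it produces the first summand.

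In the balanced region $III$, the third bullet of Lemma \ref{lem:tri} applied to $|\xi|\leq|\xi-\eta|+|\eta|$ yields
\begin{align*}
|\xi|^s \leq c_2\bigl(|\xi-\eta|^s+|\eta|^s\bigr), \qquad c_2 := \bigl(\tfrac{M}{M+1}\bigr)^{1-s},
\end{align*}
with $c_2<1$ automatic. Rerunning the same Young/Cauchy--Schwarz step (placing either $\hat f$ or $\hat g$ into $L^1$) bounds this contribution by the sum of the two terms on the right-hand side of the lemma, now with $c_2$ in place of $c_1$. Setting $c := \max(c_1, c_2) \in (0,1)$ and collecting all prefactors into a single constant $K$ closes the estimate.

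The only conceptual content is the strict subadditivity $c<1$ in the first two bullets of Lemma \ref{lem:tri}. This is what separates Gevrey spaces ($s<1$) from the analytic case ($s=1$) and lets the ``low frequency'' factor shed a small portion of its Gevrey weight in exchange for the $\brak{\grad}^{d/2+}$ Sobolev weight that Young's inequality requires in order to convert the $L^1_\eta$ norm to an $L^2_\eta$ norm. Without this improvement the $L^1$ piece would have to carry the full weight $e^{\lambda|\cdot|^s}$ and could not be controlled by any finite-order Sobolev embedding; the algebra property genuinely fails at $s=1$. Everything else is routine paraproduct-style bookkeeping.
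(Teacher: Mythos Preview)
Your proof is correct and follows essentially the same approach as the paper: a paraproduct-style frequency decomposition into low-high, high-low, and balanced regions, combined with the Gevrey triangle inequalities from Lemma~\ref{lem:tri} and Young's convolution inequality. The paper uses the fixed threshold $1/8$ where you use a generic $M$, and it is somewhat terser about the constants, but the argument is the same.
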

\begin{remark}
The proof and statement of the Gevrey product rule shows that we can consider the full power of the norm to be ``landing'' on only one of the two factors at once (and of course something analogous is true for higher order multilinear terms as well). 
\end{remark}
\begin{proof}
The proof leverages a frequency decomposition which is essentially a very simple \emph{paraproduct} (see \cite{Bony81}) that allows us to leverage the triangle inequalities above.
Namely we make the following decomposition
\begin{align*}
e^{\lambda \abs{\grad}^s}(fg) & = (2\pi)^{d/2} \int \left(1_{\abs{\xi} < \frac{1}{8}\abs{\eta-\xi}} + 1_{\abs{\eta-\xi} < \frac{1}{8}\abs{\xi}} + 1_{\abs{\eta-\xi} \approx \abs{\xi}} \right) e^{\lambda \abs{\eta}^s} \hat{f}(\xi) \hat{g}(\eta-\xi) d\xi   \\
& := T_{LH} + T_{HL} + T_{HH}. 
\end{align*}
The treatment of the HL and LH terms are symmetric so let's just consider the HL term.
In this case we have by the relevant triangle inequality: 
\begin{align*}
\abs{\eta}^s - \abs{\xi}^s \leq c\abs{\eta-\xi}^s. 
\end{align*}
Note that if $\eta$ is vector (say 2d) then we would have the following (recalling we used $\ell^1$ norms for the frequency vectors), 
\begin{align*}
(\abs{\eta_1} + \abs{\eta_2})^s + (\abs{\xi_1} + \abs{\xi_2})^s \leq c\abs{ \abs{\eta_1} - \abs{\xi_1} +  \abs{\eta_2} - \abs{\xi_2} }^s \leq c\abs{ \abs{\eta_1 - \xi_1} +  \abs{\eta_2-\xi_2} }^s = c \abs{\eta-\xi}^s. 
\end{align*}
Therefore by Young's convolution inequality we have 
\begin{align*}
\norm{T_{HL}} & \lesssim \norm{ \widehat{e^{c\lambda \abs{\grad}^s} g} }_{L^1} \norm{e^{\lambda \abs{\grad}^s} f }_{L^2} \lesssim \norm{\brak{\grad}^{d/2+} e^{c\lambda \abs{\grad}^s} g}_{L^2} \norm{e^{\lambda \abs{\grad}^s} f }_{L^2}.  
\end{align*}
In the HH case the frequencies are comparable so we actually get by the concavity, 
\begin{align*}
\abs{\eta}^s \leq c\abs{\eta-\xi}^s + c\abs{\xi}^s 
\end{align*}
and so both factors are basically in the ``weak'' norm. 
\end{proof}

\section{The proof of Landau damping in Vlasov--Poisson on $\mathbb T^d \times \mathbb R^d$}
In this section we give the proof of Theorem \ref{thm:MV}.
The proof I will use is basically a different exposition of the proof in \cite{GN21}, which, while the key steps are similar to \cite{BMM13}, has a few differences that introduce some simplifications relative to \cite{BMM13}. See \ref{sec:finalrmk} for more details.

\subsection{The bootstrap} 
Define the profile
\begin{align*}
f(t,z,v) = g(t,z+tv,v)
\end{align*}
which satisfies
\begin{align*}
\partial_t f + E(t,z+tv) \cdot \grad_v f^0 + E(t,z+tv)\cdot (\grad_v - t\grad_z) f = 0,
\end{align*}
and by Fourier restriction we have
\begin{align*}
\widehat{\rho}(t,k) = (2\pi)^n \widehat{f}(t,k,kt). 
\end{align*}
For any multiplier, we accordingly define 
\begin{align*}
A\rho = A(t,\grad_x,t \grad_x) \rho, 
\end{align*}
which makes sense as just the density is simply the Fourier restriction to $v$-frequency $=kt$. 

Without loss of generality we will assume $\frac{1}{3} < s < 1$ (see Section \ref{sec:finalrmk}). 
Define the following multiplier 
\begin{align*}
A(t,k,\eta) = e^{\lambda(t) \brak{k,\eta}^s} \brak{k,\eta}^\sigma, 
\end{align*}
with
\begin{align*}
\lambda(t) = \lambda_\infty + \frac{\delta}{\brak{t}^a}
\end{align*}
for $\delta$, $a$ chosen sufficiently small. One then chooses $0 <  \lambda_2 < \lambda_\infty < \lambda_1$ arbitrary and then $\delta$ sufficiently small so that $\lambda(0) < \lambda_1$. For simplicity, we will choose $\sigma > 10 + d$.

The nonlinear proof is the bootstrap of the following two estimates.
\begin{lemma} \label{lem:LDBoot}
Suppose that for some $\sigma -2 > b > 4$ and $m > d/2$ and universal constant $K_0$, the following estimates hold on some time interval $[0,T]$: 
\begin{align}
  & \norm{\brak{t}^b A \rho}_{L^2_t(0,T;L^2_x)} \leq 4K_0\eps \label{ineq:BootRho} \\
  & \sup_{t \in [0,T]} \norm{\brak{v}^m \brak{\grad_{x,v}} A f}_{L^2_v L^2_x} \leq 4K_0 \eps. \label{ineq:Bootf}
\end{align}
Then, for $\eps \leq \eps_0$ (depending only on $b,m,\sigma,\lambda_1,\lambda_2,\delta$) the same estimates hold with $4$ replaced with $2$.
\end{lemma}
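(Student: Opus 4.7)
The plan is to close the two estimates separately, using each bootstrap hypothesis to help prove the other. The crucial structural feature of the weight $A$ that I will exploit is that $\lambda(t)$ is strictly decreasing, so energy estimates produce a Cauchy--Kovalevskaya (CK) gain of the form $-\dot{\lambda}(t)\norm{\brak{\grad}^{s/2} Af}_{L^2}^2$, with $-\dot{\lambda}(t) \gtrsim \delta/\brak{t}^{a+1}$; this extra bit of regularity is what absorbs the losses coming from the Gevrey product rule (Lemma \ref{lem:Gprod}). Throughout, the link between the two bootstrap estimates is the Fourier-restriction identity $\widehat{A\rho}(t,k) = (2\pi)^d \widehat{Af}(t,k,kt)$.

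To improve \eqref{ineq:Bootf}, I would perform an energy estimate on $\brak{v}^m \brak{\grad_{x,v}} A f$. After differentiation in time, using $\dot\lambda(t) < 0$, the identity reads schematically
\begin{align*}
\frac{1}{2}\frac{d}{dt}\norm{\brak{v}^m\brak{\grad_{x,v}}Af}_{L^2}^2 + \abs{\dot\lambda(t)}\,\norm{\brak{\grad}^{s/2}\brak{v}^m\brak{\grad_{x,v}}Af}_{L^2}^2 = \text{React} + \text{Trans}.
\end{align*}
Fourier analysis of the composition $E(t,z+tv)$ gives $\mathcal{F}_{z,v}\big[E(t,z+t\cdot)\cdot\grad_v f^0\big](k,\eta) = \hat{E}(t,k)\cdot i(\eta-kt)\widehat{f^0}(\eta-kt)$, so the reaction term is controlled by $\norm{A\rho}_{L^2_x}$ together with the analyticity of $f^0$ (\ref{ineq:regf0}); integrating in time and using Cauchy--Schwarz with the $\brak{t}^b$-weight from \eqref{ineq:BootRho} produces a bound of order $\eps^2$. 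The transport term, which is genuinely quadratic in the unknown, is estimated via Lemma \ref{lem:Gprod}; any loss of a factor $\brak{\grad}^{s/2}$ coming from the paraproduct and the concavity inequality of Lemma \ref{lem:tri} is absorbed by the CK term on the left-hand side. Integrating in time closes the bound with an improvement of order $\eps + \eps^2 + \eps^3$.

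To improve \eqref{ineq:BootRho}, I would use the Duhamel/Volterra representation from the heuristic discussion,
\begin{align*}
\hat\rho(t,k) = H(t,k) + \int_0^t R(t-\tau,k) H(\tau,k)\,d\tau, \qquad \abs{R(t,k)} \lesssim \abs{k}^{-1} e^{-\gamma\abs{kt}},
\end{align*}
with nonlinear source
\begin{align*}
H(t,k) = \widehat{g_{in}}(k,kt) + \sum_{\ell\in\mathbb Z^d_*}\int_0^t \hat\rho(\tau,\ell)\,\frac{\ell\cdot k}{\abs{\ell}^2}(t-\tau)\,\hat f(\tau,k-\ell,kt-\ell\tau)\,d\tau.
\end{align*}
Because convolution against $R$ is bounded on $L^2_t L^2_x$ with the weights $\brak{t}^b A$ (using Young's inequality and the exponential decay of $R$, after splitting $\brak{t}^b \lesssim \brak{t-\tau}^b + \brak{\tau}^b$), it suffices to estimate $\norm{\brak{t}^b A H}_{L^2_t L^2_x}$. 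The linear part $\widehat{g_{in}}(k,kt)$ is controlled by the initial data through Theorem \ref{thm:FKTLD}. For the nonlinear echo source, I apply Lemma \ref{lem:tri} to split $\brak{k,kt}^s$ between $\brak{\ell,\ell\tau}^s$ (absorbed into $A\rho(\tau,\ell)$) and $\brak{k-\ell,\,kt-\ell\tau}^s$ (absorbed into $Af(\tau,k-\ell,kt-\ell\tau)$), then use Cauchy--Schwarz in $\tau$ with the $\brak{\tau}^b$-weight from \eqref{ineq:BootRho} and in $\ell$ against the polynomial decay of $\hat f$ afforded by $\brak{\grad_{x,v}} A f \in L^\infty_t L^2$.

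The main obstacle is exactly the mechanism identified in the heuristic: the factor $(t-\tau)$ grows, and when $\ell\approx k\pm 1$ the integrand is concentrated near the resonant time $\tau \approx kt/\ell$, reproducing the plasma-echo near-resonance. To sum over the echo chain one must balance the growth $\brak{t-\tau}$, the weight ratio $\brak{t}^b/\brak{\tau}^b$, the power of $\brak{\grad}$ lost from the fact that $\grad_v$ becomes $kt-\ell\tau$, and the Gevrey triangle-inequality gain $(K/(1+K))^{1-s}$. These balance only just barely when $s>1/3$, which is the Mouhot--Villani threshold; the CK term has to be used carefully to pay for the final $\brak{\grad}^{s/2}$-worth of regularity in the most resonant frequency. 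Once these bounds yield $\norm{\brak{t}^b A H}_{L^2_tL^2_x}^2 \lesssim \eps^2 + (K_0\eps)^4$, choosing $K_0$ and then $\eps_0$ small closes the bootstrap.
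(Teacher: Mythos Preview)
Your overall architecture is right and matches the paper: an energy estimate with the CK damping term for $f$, and the Volterra/resolvent representation for $\rho$, reducing the latter to an estimate on $\brak{t}^b A H$. Your treatment of the $f$ estimate is essentially the paper's Lemma on the distribution function (you should also introduce the commutator $[\mathcal{B},\, E(z+tv)\cdot(\grad_v - t\grad_z)]$ before estimating the transport term; otherwise one full derivative lands on $f$ with no cancellation).

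There is, however, a real gap in your density estimate, specifically in how you propose to close the echo in the $T_{HL}$ contribution. You attribute the balance to ``the Gevrey triangle-inequality gain $(K/(1+K))^{1-s}$'' and to the CK term. Neither of these is the mechanism that actually closes the echo kernel. The constant $(K/(1+K))^{1-s}$ is a fixed number strictly less than one; it only tells you that after passing the weight from $(k,kt)$ to $(\ell,\ell\tau)$ you retain a residual factor $e^{-c\lambda\brak{k-\ell,\,kt-\ell\tau}^s}$. That residual exponential is already what localizes the $\tau$-integral near resonance, and near resonance the remaining growth is still of order $\tau/|\ell|$, which is \emph{not} summable without an additional gain. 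The CK term lives in the $f$ energy estimate and does not feed into the Volterra kernel for $\rho$.

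What actually closes the echo is the \emph{time-dependence} of $\lambda$ used directly inside the density kernel: since $A(t,k,kt)$ carries $\lambda(t)$ but $A\rho(\tau,\ell)$ carries $\lambda(\tau)$, one has a genuine leftover factor
\[
e^{(\lambda(t)-\lambda(\tau))\brak{k,kt}^s}
\]
in the Schur kernel. Because $\lambda(\tau)-\lambda(t) \gtrsim (t-\tau)/(\brak{\tau}^a\brak{t})$, and because near resonance $t-\tau \gtrsim t/|\ell|$, this produces a gain
\[
e^{-c\brak{k,kt}^s/(\brak{\tau}^a|\ell|)} \;\lesssim_\alpha\; \frac{(\brak{\tau}^a|\ell|)^\alpha}{\brak{k,kt}^{s\alpha}}
\]
for any $\alpha>0$. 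Choosing $s\alpha = 1+a\alpha$ kills the growth $\tau/|\ell|$, and the resulting power of $|\ell|$ is summable precisely when $\frac{1-s}{s-a} < 2$, i.e.\ $s>1/3$ for $a$ small. This is the one place in the argument where the Gevrey threshold enters, and it is the integrated form of the same $\dot\lambda$ you correctly identified for the CK term---but it must be invoked \emph{inside the Volterra kernel}, not via the energy estimate on $f$.
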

The results of Theorem \ref{thm:MV} follow in a straightforward manner (in particular, the construction of $f_\infty$ is the same as the linear case) once we have Lemma \ref{lem:LDBoot}. 
\begin{remark}
By approximating the initial data in Gevrey-$1/s'$ with $s < s' \leq 1$, one can ensure that the above quantities take values continuously in time (as the VP equations propagate such Gevrey regularity) and then pass to the limit. 
\end{remark}
For notational succinctness in future estimates, we define
\begin{align*}
\brak{\grad_{x,v}} A(t,\grad_x,\grad_v) = \mathcal{B}(t,\grad_x, \grad_v).
\end{align*}

\subsection{Estimate on the density} 
The main step of Lemma \ref{lem:LDBoot} is the estimate on the density \eqref{ineq:BootRho}; the estimate on the profile $f$ is a fairly straightforward consequence.  

Let us compute the density again by integrating the (nonlinear) Vlasov equations in a manner similar to how we treated the linearized Vlasov equations: 
\begin{align*}
\hat{\rho}(t,k) & = \widehat{g_{in}}(k,kt) - \sum_{\ell \neq 0} \int_0^t \widehat{E}(\tau,\ell) \cdot \int e^{-ik \cdot v(t-\tau)} \grad_v \widehat{g}(t,k-\ell,v) dv d\tau
\\ & \quad - \int_0^t \widehat{E}(\tau,k) \int e^{-ik \cdot v (t-\tau)} \grad_v f^0(v) dv d\tau. 
\end{align*}
Next, notice that
\begin{align*}
\int e^{-ik \cdot v (t-\tau)} \grad_v f^0(v) dv  & = ik(t-\tau)\widehat{f^0}(k(t-\tau)) \\ 
\int e^{-ik \cdot v(t-\tau)} \grad_v \hat{g}(t,k-\ell,v) dv & = \int -ik (t-\tau) e^{-ik \cdot v(t-\tau)} \hat{g}(t,k-\ell,v) dv \\ 
& = \int -ik (t-\tau) e^{-ik \cdot v(t-\tau) - i(k-\ell) \cdot v \tau} \hat{f}(t,k-\ell,v) dv \\ & = -i k (t-\tau) \hat{f}(t,k-\ell, kt - \ell \tau). 
\end{align*}
From Theorem \ref{thm:LVP} on the linearized Vlasov-Poisson equation, we therefore have the following formula for the density
\begin{align*}
\hat{\rho}(t,k) = H(t,k) + \int_0^t R(t-\tau,k) H(\tau,k) d\tau, 
\end{align*}
with the nonlinear `source'
\begin{align*}
H(t,k) = \widehat{g_{in}}(k,kt) + \sum_{\ell \in \mathbb Z^d_\ast} \int_0^t \hat{\rho}(\tau,\ell) \frac{\ell \cdot k}{\abs{\ell}^2}(t-\tau) \widehat{f}(\tau,k-\ell, kt-\ell \tau) d\tau.
\end{align*}
By Theorem \ref{thm:LVP}, there is some $\gamma > 0$ such that
\begin{align*}
\abs{R(t,k)} \lesssim \frac{1}{\abs{k}}e^{-\gamma \abs{tk}}. 
\end{align*}
In particular, it is straightforward to check (for $s < 1$) that 
\begin{align*}
  \norm{ \brak{t}^b A \int_0^t R(t-\tau,k) H(\tau,k) d\tau  }_{L^2_t L^2_x} \lesssim \norm{\brak{t}^b A H}_{L^2_t L^2_x}. 
\end{align*}
Therefore, the desired improvement of the density estimate \eqref{ineq:BootRho} follows from the following estimate on $H$. 
\begin{lemma} \label{lem:HBoot}
Suppose that the estimates in Lemma \ref{lem:LDBoot} hold. 
Then, for any $b > 6$ universal constant, we have
\begin{align*}
  \norm{ \brak{t}^{b} A H }_{L^2_t(0,T;L^2_x)} & \lesssim_{b,a,\delta,\lambda_\infty} \norm{\brak{v}^m e^{\lambda(0)\brak{\grad}^s} \brak{\grad_v}^{b + \sigma} g_{in}}_{L^2_x L^2_v} \\
  & \quad + \norm{A \brak{t}^b \rho}_{L^2_t(0,T;L^2_x)} \sup_{t\in (0,T)} \norm{\brak{v}^m \mathcal{B} f}_{L^2_x L^2_v}. 
\end{align*}
\end{lemma}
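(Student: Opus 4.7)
The plan is to split $H = H_L + H_{NL}$, where $H_L(t,k) = \widehat{g_{in}}(k,kt)$ is the free-transport density of the initial data and $H_{NL}(t,k) = \sum_{\ell\neq 0}\int_0^t \hat\rho(\tau,\ell)\frac{\ell\cdot k}{|\ell|^2}(t-\tau)\hat f(\tau, k-\ell, kt-\ell\tau)\,d\tau$ is the bilinear reaction term. For $H_L$, the first estimate on the right-hand side follows from a direct adaptation of the Sobolev trace argument of Theorem \ref{thm:FKTLD}: since $k \in \mathbb{Z}^d_*$ is a nonzero integer, $\brak{t} \leq \brak{kt} \leq \brak{k,kt}$, and by the monotonicity $\lambda(t) \leq \lambda(0)$ we have $\brak{t}^b A(t,k,kt) \lesssim e^{\lambda(0)\brak{k,kt}^s}\brak{k,kt}^{b+\sigma}$. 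Applying the trace lemma with $\sigma$ replaced by $b+\sigma$ concludes this piece.

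For the nonlinear piece $H_{NL}$, the essential change of variables is $m := k-\ell$, $\xi := kt - \ell\tau$, yielding the exact additive decomposition $(k,kt) = (\ell, \ell\tau) + (m, \xi)$. This is precisely what enables a paraproduct-style partition à la Lemma \ref{lem:Gprod} into three frequency regions: a \emph{reaction} region (R) where $\brak{m,\xi} \leq \brak{\ell,\ell\tau}/8$, a \emph{transport} region (T) where $\brak{\ell,\ell\tau} \leq \brak{m,\xi}/8$, and the comparable high-high region. In each region I apply the sharp Gevrey triangle inequality in Lemma \ref{lem:tri} to distribute $e^{\lambda(t)\brak{k,kt}^s}$ between the two factors, picking up a strict gain factor $c<1$ on the smaller side; the monotonicity $\lambda(\tau) \geq \lambda(t)$ then upgrades the weight on $(\ell,\ell\tau)$ into $A(\tau,\ell,\ell\tau)$, matching the norm of $\rho$ appearing in the bootstrap.

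In the reaction region the constraints $|m| \leq |\ell|/8$ and $|\xi| \leq |\ell\tau|/8$, combined with the identity $k(t-\tau) = \xi - m\tau$, force both $t \sim \tau$ and $|k| \sim |\ell|$; in particular $\brak{t}^b \lesssim \brak{\tau}^b$, so the whole weight $\brak{t}^b A(t,k,kt)$ transfers onto $\hat\rho(\tau,\ell)$, placing the density factor in the bootstrap norm $\norm{A\brak{t}^b \rho}_{L^2_t L^2_x}$. The residual kernel acting on $\hat f(\tau,m,\xi)$ takes the schematic form $(t-\tau)|k|/|\ell|\cdot e^{c\lambda(\tau)\brak{m,\xi}^s}/\mathcal{B}(\tau,m,\xi)$. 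Using $(t-\tau)|k| = |\xi - m\tau| \leq \brak{m,\xi}\brak{\tau}$ together with the definition $\mathcal{B} = \brak{k,\eta}A$, the surviving exponential $e^{-(1-c)\lambda_\infty\brak{m,\xi}^s}$ and the polynomial factor $\brak{m,\xi}^{-1-\sigma}$ combine to produce a kernel that is summable in $(m,\xi)$; any residual slow $\brak{\tau}$ growth is absorbed by trading a few powers in the large gap between $\sigma$ and $b$ (using $\brak{\tau} \lesssim \brak{\ell,\ell\tau}/|\ell|$ for $|\ell\tau|$ large). The transport and high-high regions are handled symmetrically, with the Gevrey weight now riding on $\mathcal{B}\hat f$ and $\hat\rho$ remaining in its bootstrap norm. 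A Schur / Young-type estimate on the resulting kernel closes the $L^2_t L^2_x$ bound, pairing $\norm{A\brak{t}^b\rho}_{L^2_t L^2_x}$ against $\sup_\tau \norm{\brak{v}^m \mathcal{B} f}$.

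The main obstacle is genuinely the reaction region, which is exactly where the plasma-echo mechanism lives: the factor $(t-\tau)$ is the Orr-type echo driver that, naively, grows linearly in time, and it is only the strict gain $c<1$ in the Gevrey triangle inequality (Lemma \ref{lem:tri}) that produces the crucial exponential damping $e^{-(1-c)\lambda_\infty\brak{m,\xi}^s}$ needed to close the estimate. This is the same mechanism that feeds into the toy-model echo-chain heuristic and ultimately forces the Gevrey threshold $s > 1/3$ in Theorem \ref{thm:MV}.
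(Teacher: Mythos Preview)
Your split $H = H_L + H_{NL}$ and the frequency trichotomy are correct and match the paper's approach. The treatment of $H_L$ is fine. The gap is in your reaction region, and it is precisely the echo mechanism.

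First, a smaller point: the claim that the reaction constraints force $t\sim\tau$ and $|k|\sim|\ell|$ is not correct as stated. The partition gives $\brak{m,\xi}\leq\tfrac{1}{8}\brak{\ell,\ell\tau}$, which does \emph{not} imply $|m|\leq|\ell|/8$ and $|\xi|\leq|\ell\tau|/8$ separately; for instance $\ell=1$, $\tau\gg1$, $m\sim\tau/10$, $\xi=0$ is allowed and yields $t\ll\tau$. The paper handles $\tau<t/2$ in \emph{every} region by exploiting the gap $\lambda(\tau)-\lambda(t)\gtrsim (t-\tau)\brak{\tau}^{-a}\brak{t}^{-1}$ (see \eqref{ineq:obliterate}), which buys arbitrarily many powers of $\brak{k,kt}$.

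The essential gap, however, is your claimed mechanism for closing the echo. You propose to use only the static leftover $e^{-(1-c)\lambda_\infty\brak{m,\xi}^s}$. At an echo resonance one has $m=k-\ell=O(1)$ and $\xi=kt-\ell\tau=O(1)$, so this factor is merely a constant, while the driving term $(t-\tau)|k|/|\ell|\sim t/|\ell|$ grows without bound. Your absorption trick ``$\brak{\tau}\lesssim\brak{\ell,\ell\tau}/|\ell|$'' moves this growth onto the $\rho$ side, but then the $\rho$ factor no longer sits in the bootstrap norm $\norm{A\brak{t}^b\rho}$. There is nothing left on the $f$ side to pay for it, since $\brak{m,\xi}^{-\sigma-1}$ is also just a constant at resonance. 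This is exactly why the toy model yields unbounded growth in finite regularity.

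The correct mechanism is to \emph{retain} the factor $e^{(\lambda(t)-\lambda(\tau))\brak{k,kt}^s}$ rather than discard it when upgrading $\lambda(t)\to\lambda(\tau)$ on the $\rho$ side. In the resonant zone $|kt-\ell\tau|\ll|kt|$ with $k\neq\ell$ and $\tau>t/2$, one has $\lambda(\tau)-\lambda(t)\gtrsim\brak{\tau}^{-a}|\ell|^{-1}$, and since $\brak{k,kt}^s$ is \emph{large}, the exponential $e^{(\lambda(t)-\lambda(\tau))\brak{k,kt}^s}$ provides genuine decay. Trading this exponential for the polynomial $(|\ell|\brak{\tau}^a)^\alpha/\brak{k,kt}^{s\alpha}$ and choosing $s\alpha=1+a\alpha$ is what neutralizes the $(t-\tau)/|\ell|$ growth; the resulting power of $|\ell|$ is summable precisely when $s>1/3$. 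This is the only place in the entire proof where the Gevrey threshold is used, and it is the time-dependence of $\lambda(t)$---not the triangle-inequality constant $c<1$---that does the work.
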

\begin{proof}
The contribution from the initial data is treated by the kinetic free transport estimate, Theorem \ref{thm:FKTLD} above. 

The nonlinear contribution to $\brak{t}^b A H$ is given by
\begin{align}
\sum_{\ell \in \mathbb Z^d_\ast} \int_0^t \brak{t}^b A(t,k,kt) \hat{\rho}(\tau,\ell) \frac{\ell \cdot k}{\abs{\ell}^2} (t-\tau) \widehat{f}(\tau,k-\ell, kt-\ell \tau) d\tau. \label{def:NLH}
\end{align}
We will sub-divide up this integral into a few pieces, though I will not endeavor to find the absolute most efficient splitting. 

The first thing to note is that we always have
\begin{align}
\lambda(\tau) - \lambda(t) = \frac{\delta}{\brak{\tau}^a} - \frac{\delta}{\brak{t}^a} \gtrsim \frac{\abs{t-\tau}}{\brak{\tau}^a \brak{t}}, \label{ineq:obliterate}
\end{align}
which will ultimately mean that if $\tau < t/2$ we can gain, what is for all practical purposes, basically an unlimited amount of regularity from the time-delay.
This will be important below and is how we gain the ``free'' $\brak{t}^b$ (note that we could not gain $\brak{kt}^b$).  
We use the following partition of unity in the integral in \eqref{def:NLH}, which is exactly the decomposition we used in Lemma \ref{lem:Gprod} if we recall that $\hat{\rho}(t,k) = (2\pi)^d \widehat{f}(t,k,kt)$: 
\begin{align*}
1 = \mathbf{1}_{\abs{k-\ell,kt-\ell \tau} < \frac{1}{2}\abs{\ell,\ell \tau}} + \mathbf{1}_{\abs{\ell,\ell \tau} < \frac{1}{2}\abs{k-\ell,kt -\ell \tau}} + \mathbf{1}_{\abs{k-\ell,kt-\ell \tau} \approx \abs{\ell,\ell \tau}}.  
\end{align*}
We will refer to the resulting pieces of \eqref{def:NLH} as $T_{HL}$, $T_{LH}$, and $T_{HH}$ (which are not symmetric as they were in Lemma \ref{lem:Gprod}).
We will only detail how to treat $T_{HL}$ and $T_{LH}$; the treatment of $T_{HH}$ can be done similar to either, but easier (as in Lemma \ref{lem:Gprod}). 

Consider first the $T_{LH}$ term, where all of the ``derivatives'' are landing on $f$.  
By the triangle inequalities in Lemma \ref{lem:tri} we have: for some $c < 1$, 
\begin{align*}
\abs{T_{LH }} & \lesssim \sum_{\ell \in \mathbb Z^d_\ast} \int_0^t \mathbf{1}_{\abs{\ell,\ell \tau} < \frac{1}{2}\abs{k-\ell,kt -\ell \tau}} e^{c\lambda(\tau) \brak{\ell,\ell \tau}^s} \abs{\hat{\rho}(\tau,\ell)} \brak{t}^{b} \\ & \quad\quad \times e^{ (\lambda(t)- \lambda(\tau) ) \brak{k,kt}^s} \frac{\abs{\ell \cdot k}}{\abs{\ell}^2}(t-\tau)   \abs{\widehat{Af}(\tau,k-\ell, kt-\ell \tau)} d\tau. 
\end{align*}
The regularity gain from \eqref{ineq:obliterate} implies that for the contribution from $\tau < t/2$ is easily dealt with.
Indeed, for any $\alpha \geq 0$ 
\begin{align*}
\int_{0}^{t/2} (\ast) d\tau & \lesssim_{\alpha,\delta,a} \sum_{\ell \in \mathbb Z^d_\ast} \int_0^{t/2} \mathbf{1}_{\abs{\ell,\ell \tau} < \frac{1}{2}\abs{k-\ell,kt -\ell \tau}} e^{c\lambda(\tau) \brak{\ell,\ell \tau}^s} \abs{\hat{\rho}(\tau,\ell)} \frac{\brak{t}^{b + \alpha a}}{\brak{k,kt}^\alpha} \\ & \quad\quad \times \frac{\abs{\ell \cdot k}}{\abs{\ell}^2}(t-\tau)  \abs{\widehat{Af}(\tau,k-\ell, kt-\ell \tau)} d\tau. 
\end{align*}
Therefore, by Schur's test (equivalently Cauchy-Schwarz and Fubini's theorem), for $\alpha$ sufficiently large we have
\begin{align*}
\norm{\int_{0}^{t/2} (\ast) d\tau }_{L^2_t L^2_k}^2 &\lesssim \norm{A\rho}_{L^2_{t,x}}^2 \sup_{k,t} \sum_\ell \brak{t}^{-2}\int_0^t \brak{k-\ell, kt-\ell \tau}^{-2d-1} d\tau \\ & \quad \quad \times \sup_{\ell,\tau} \sum_k \int_\tau^T \brak{t}^{-2} \brak{k-\ell, kt-\ell \tau}^{-2d-1} d\tau \norm{\brak{v}^m \mathcal{B} f}_{L^2}^2 \\
& \lesssim \norm{A\rho}_{L^2_{t,x}}^2 \sup_{t \in (0,T) }\norm{\brak{v}^m \mathcal{B} f}_{L^2_{x,v}}^2,  
\end{align*}
which is sufficient for this contribution. 
Hence, we only need to worry about $\tau > t/2$.  
Note that in this case $\abs{k(t-\tau)} \lesssim \brak{\ell \tau} \brak{k-\ell}$, and hence
\begin{align*}
\int_{t/2}^t (\ast) d\tau & \lesssim \sum_{\ell \in \mathbb Z^d_\ast} \int_{t/2}^t \mathbf{1}_{\abs{\ell,\ell \tau} < \frac{1}{2}\abs{k-\ell,kt -\ell \tau}} e^{c\lambda(\tau) \brak{\ell,\ell \tau}^s} \brak{\tau}^{1+b} \abs{\hat{\rho}(\tau,\ell)}  \\ & \quad \quad \times \abs{k-\ell}  \abs{\widehat{Af}(\tau,k-\ell, kt-\ell \tau)} d\tau. 
\end{align*}
First we Cauchy-Schwarz in $\ell$, using that we can pick $\sigma > b + 2$, 
\begin{align*}
\norm{\int_{t/2}^t (\ast) d\tau}_{L^2_t L^2_x}^2 & \lesssim \norm{A \brak{t}^b \rho}_{L^2_t L^2_x} \sum_{k,\ell \in \mathbb Z^d_\ast} \int_0^T \int_{t/2}^t \mathbf{1}_{\abs{\ell,\ell \tau} < \frac{1}{2}\abs{k-\ell,kt -\ell \tau}} e^{c\lambda(\tau) \brak{\ell,\ell \tau}^s} \brak{\ell \tau}^\sigma \abs{\hat{\rho}(\tau,\ell)}  \\ &\quad\quad \times  \abs{\widehat{\grad_x Af}(\tau,k-\ell, kt-\ell \tau)}^2 d\tau dt.  
\end{align*}
Taking the $f$ factors out supremum-in-$\tau$ we then obtain
\begin{align*}
  \norm{\int_{t/2}^t (\ast) d\tau}_{L^2_t L^2_x}^2 & \lesssim \norm{A \brak{t}^b \rho}_{L^2_t L^2_x}^2 \sup_{\tau \leq T} \sum_k \sup_\eta \abs{\widehat{\grad_x A f}(\tau,k-\ell, \eta)}^2 \\
  & \lesssim \norm{A \brak{t}^b \rho}_{L^2_t L^2_x}^2 \sup_{\tau \leq T} \norm{\brak{v}^m \mathcal{B} f}_{L^2_{x,v}}^2, 
\end{align*}
where the last estimate followed by Sobolev embedding. This completes the treatment of the $T_{LH}$term. 

Let us consider next how to treat $T_{HL}$ term, which is the main difficulty.
In fact, it is the only place in the proof where Gevrey regularity is crucially required. 
In this case we obtain 
\begin{align*}
\abs{T_{HL}} \lesssim \sup_{t \in (0,T)}\norm{\brak{v}^m \mathcal{B} f}_{L^2} \sum_{\ell} \int_0^t \brak{t}^b\abs{A\rho}(\tau,\ell) e^{(\lambda(t) - \lambda(\tau)) \brak{k,kt}^s} \frac{k \cdot \ell(t-\tau)}{\abs{\ell}^2} e^{-c\lambda\brak{k-\ell, kt-\ell \tau}^s} d\tau. 
\end{align*}
Here, the exact norm on $\rho$ is no longer important, and the whole thing reduces to the $L^2_{t,x}$ boundedness of a certain integral operator.
Define the kernel
\begin{align*}
K(t,\tau,k,\ell) = \frac{\brak{t}^b}{\brak{\tau}^b}\frac{k \cdot \ell(t-\tau)}{\abs{\ell}^2} e^{-\delta \lambda\brak{k-\ell, kt-\ell \tau}^s} e^{(\lambda(t) - \lambda(\tau)) \brak{k,kt}^s}.
\end{align*}
By Schur's test/Cauchy-Schwarz and Fubini's theorem, we therefore have 
\begin{align}
\norm{T_{HL}}_{L^2_t L^2_x}^2 & \lesssim \sup_{t \in (0,T)}\norm{\brak{v}^m \mathcal{B} f}_{L^2}^2 \notag \\
& \quad \times \left(\sup_{t} \sup_k \sum_\ell \int_0^t K(t,\tau,k,\ell) d\tau \right) \left(\sup_{0 \leq \tau \leq T} \sup_\ell \sum_k \int_\tau^T K(t,\tau,k,\ell) dt \right)  \norm{\brak{t}^b A\rho}^2_{L^2_t L^2_x}. \label{ineq:Schur}
\end{align}
Hence, it suffices to obtain uniform estimates on the two norms of the kernel $K$.  
Though it isn't that obvious, the two estimates are essentially the same, so it basically suffices to check one of them (see \cite{BMM13} for more details). 
We will check the following estimate: 
\begin{align*}
\sup_{t} \sup_{k \neq 0} \sum_{\ell \neq 0} \int_0^t K(t,\tau,k,\ell) d\tau  \lesssim 1.
\end{align*}
As before, if $\tau$ is not close to $t$ the estimate becomes basically trivial by \eqref{ineq:obliterate},  so let us reduce to the interesting case of $t \approx \tau$, which amounts to estimating:  
\begin{align*}
  \sum_\ell \mathcal{R}_\ell := \sum_{\ell \neq 0} \int_{t/2}^t \frac{k \cdot \ell(t-\tau) }{\abs{\ell}^2} e^{-\delta \lambda\brak{k-\ell, kt-\ell \tau}^s} e^{(\lambda(t) - \lambda(\tau)) \brak{k,kt}^s} d\tau. 
\end{align*}
Note that 
\begin{align*}
\abs{k (t-\tau)} \leq \tau \abs{k-\ell} + \abs{kt - \ell \tau}. 
\end{align*}
Next, we need to separate between $k= \ell$ and $k \neq \ell$.
If $k = \ell$ then we have (recall that $k \neq 0$).  
\begin{align*}
\mathcal{R}_k \lesssim \int_{t/2}^t e^{-\frac{1}{2}\delta \lambda\brak{k (t- \tau)}^s}  d\tau \lesssim 1.
\end{align*}
Next, consider $k \neq \ell$. 
Of course, we need $\abs{kt  - \ell \tau} < \frac{1}{100}\abs{k t}$ for anything interesting to happen (as otherwise the estimate is again straightforward) so let us further reduce the support of the integral to this case. 
In this case, we obtain
\begin{align*}
\lambda(\tau) - \lambda(t) \gtrsim \frac{\abs{t-\tau}}{\brak{\tau}^a\brak{t}} \gtrsim \frac{1}{\brak{\tau}^a} \frac{1}{\abs{\ell}}. 
\end{align*}
Indeed, if $k$ and $\ell$ were co-linear (i.e. basically if we were in 1d), then the result is immediate, noting that $\abs{\tau - \frac{kt}{\ell}} \ll \frac{kt}{\ell} < t$,
where in this case we note that necessarily (say $k > 0$) $0 < \ell < k$ as otherwise $\tau$ would need to be greater than $t$ or negative which is not possible,
and then
\begin{align*}
\abs{t - \tau} \approx t - \frac{kt}{\ell} = \frac{t}{\ell} \abs{k-\ell} \gtrsim \frac{t}{\ell}.  
\end{align*}
Note that
\begin{align*}
\abs{kt - \ell \tau}^2 = \abs{k t - \frac{k \otimes k}{\abs{k}^2} \ell \tau}^2 + \abs{(I - \frac{k \otimes k}{\abs{k}^2} ) \ell \tau}^2, 
\end{align*}
and so we can always basically just reduce to the 1d case (in particular, even in higher dimensions, the `strongest'  nonlinear contribution is when $k$ and $\ell$ are co-linear).
Now, we can use the estimate
\begin{align*}
  \sum_{k \neq \ell}  \mathcal{R}_\ell \lesssim \sum_\ell \int_{t/2}^t \frac{\brak{\tau}}{\abs{\ell}} \frac{\ell^\alpha \brak{\tau}^{a\alpha}}{\brak{k,kt}^{s\alpha}} e^{-\delta \lambda\brak{k-\ell, kt-\ell \tau}^s}  d\tau. 
\end{align*}
Now the idea is to pick $s\alpha = 1 + a \alpha$ so that the large numerator is neutralized.
This produces 
\begin{align*}
\sum_{k \neq \ell} \mathcal{R}_\ell \lesssim 1 +  \sum_\ell \int_{t/2}^t  \ell^{\alpha - s\alpha - 1} e^{-\delta \lambda\brak{k-\ell, kt-\ell \tau}^s}  d\tau. 
\end{align*}
The power of $\ell$ is $(1-s)\alpha - 1 = \frac{1-s}{s-a} - 1$ which needs to be less than $1$ in order to be able to sum and integrate.
Notice that
\begin{align*}
\frac{1-s}{s-a} - 1 \leq 1 \Rightarrow 1 + a \leq 3s,
\end{align*}
which gives us the regularity requirement $s > 1/3$ (this is the only place in the proof where Gevrey regularity is crucially used). 
This is sufficient because now the integral in $\tau$ gains the power of $\ell$ that we need and we obtain,
\begin{align*}
\sum_{k \neq \ell} \mathcal{R}_\ell \lesssim 1, 
\end{align*}
this completes the vital estimate on our density $\rho$. 
\end{proof} 

\subsection{Estimate on the distribution function} 
Now let us consider the estimate on the distribution function -- i.e. the improvement of \eqref{ineq:Bootf}.
Notice that by basic Fourier analysis, we have
\begin{align*}
\norm{\brak{v}^m \mathcal{B} f}_{L^2_{x,v}} \approx \sum_{0 \leq j \leq \abs{m}} \norm{\mathcal{B} D_\eta^j \widehat{f}}_{L^2_{x,v}} = \sum_{0 \leq j \leq \abs{m}} \norm{\mathcal{B} (v^j f)}_{L^2_{x,v}}. 
\end{align*}
We obtain the following relatively straightforward lemma using a classical method for nonlinear transport-type equation estimates, which to my knowledge, was introduced for the Euler and Navier-Stokes equations by Foias and Temam \cite{FoiasTemam89} in 1989.
The adaptation to Vlasov-Poisson does not introduce any significant difficulties; I will give the details for completeness and for exposition. 
\begin{lemma} \label{lem:BootfEest}
There holds for any integer $m > d/2$ and $\sigma > 2 + d/2$,
\begin{align*}
\frac{1}{2}\frac{d}{dt} \sum_{0 \leq j \leq m}\norm{\mathcal{B} (v^j f)}_{L^2_{x,v}}^2 + \left(\sum_{0 \leq j \leq m} \brak{t}^{-2} \norm{\brak{\grad}^{-\sigma + 2 + d/2} \mathcal{B} (v^j f)}_{L^2_{x,v}} - \dot{\lambda}(t)\right) \sum_{0 \leq j \leq m} \norm{\brak{\grad}^{s/2} \mathcal{B} (v^j f)}_{L^2_{x,v}} & \\
& \hspace{-12cm} \lesssim \brak{t}^2 \norm{A\rho(t)}_{L^2_x}\sum_{0 \leq j \leq m}\norm{\mathcal{B} (v^j f)}_{L^2_{x,v}}^2 + \brak{t}\norm{A\rho(t)}_{L^2_x}\sum_{0 \leq j \leq m}\norm{\mathcal{B} (v^j f)}_{L^2_{x,v}}. 
\end{align*}
The desired nonlinear bootstrap estimate then follows by integration and choosing $\eps$ small relative to $\delta$ provided that $b > 4$ (so that the factor $\brak{t}^2 \norm{A\rho}_{L^2_x} \in L^1_t$). 
\end{lemma}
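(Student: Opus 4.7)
The strategy is a standard weighted energy estimate on the profile equation, structured so that the inhomogeneous and nonlinear interactions with $E$ can be paired against the $\norm{A\rho}_{L^2_x}$ norm we already control. Starting from the profile equation
\[
\partial_t f + E(t,z+tv)\cdot \grad_v f^0 + E(t,z+tv)\cdot(\grad_v - t\grad_z) f = 0,
\]
and using $\grad_v(v^j f) = v^j\grad_v f + j v^{j-1} f$, I would first derive the $v^j$-weighted equation
\[
\partial_t(v^j f) + v^j E\cdot \grad_v f^0 + E\cdot(\grad_v - t\grad_z)(v^j f) = j E \cdot v^{j-1} f,
\]
where the right-hand side is a harmless lower-order correction absorbed inductively in $j$. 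Applying $\mathcal B$ and pairing with $\mathcal B(v^j f)$ in $L^2_{x,v}$, the time derivative of the symbol produces $\dot\lambda(t)\norm{\brak{\grad}^{s/2}\mathcal B(v^j f)}_{L^2_{x,v}}^2$, which (since $\dot\lambda<0$) furnishes the $-\dot\lambda$ dissipation on the left-hand side of the lemma.

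Next comes the transport term. The crucial algebraic input is the null identity $(\grad_v - t\grad_z) E(t,z+tv) = 0$, which says that $E(t,z+tv)\cdot(\grad_v - t\grad_z)$ is divergence-free on $\mathbb T^d\times\mathbb R^d$. Integration by parts then gives
\[
\inner{E(t,z+tv)\cdot(\grad_v - t\grad_z)\mathcal B(v^j f),\,\mathcal B(v^j f)}_{L^2_{x,v}} = 0,
\]
so the entire transport contribution reduces to the commutator $\inner{[\mathcal B, E(t,z+tv)\cdot(\grad_v - t\grad_z)](v^j f),\,\mathcal B(v^j f)}$, following the Foias--Temam template.

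I would then decompose this commutator by paraproduct as in Lemma \ref{lem:Gprod}, splitting the $(z,v)$-Fourier convolution into low-high, high-low, and high-high pieces. At output frequency $(k,\eta)$ the $E$-frequency is $\ell$ (supported on $\eta_E = t\ell$), the input $f$-frequency is $(k-\ell,\eta-t\ell)$, and the output derivative symbol is $i(\eta-tk) = i(\eta - t\ell) - it(k-\ell)$. In the $LH$ regime (high $f$, low $E$), the symbol difference $\mathcal B(k,\eta) - \mathcal B(k-\ell,\eta-t\ell)$ is controlled via Lemma \ref{lem:tri} by $\brak{\ell,t\ell}/\brak{k-\ell,\eta-t\ell}^{1-s}$ times $\mathcal B(k-\ell,\eta-t\ell)$; combined with Gevrey decay of $\widehat E(\ell)$ and a Cauchy--Schwarz split of the resulting $\brak{\grad}^s$ gain between a $\brak{\grad}^{s/2}$ piece and a $\brak{\grad}^{-\sigma+2+d/2}$ weak norm, this produces exactly the extra left-hand-side term $\brak{t}^{-2}\norm{\brak{\grad}^{-\sigma+2+d/2}\mathcal B(v^j f)}\cdot\norm{\brak{\grad}^{s/2}\mathcal B(v^j f)}$ appearing in the lemma.

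The $HL$ case is the main obstacle and produces the $\brak{t}^2\norm{A\rho}_{L^2_x}$ factor. Here the weight $\mathcal B$ lands on $E(t,z+tv)$; using that its $(z,v)$-Fourier support lies on $\eta=tk$ and that $\abs{\widehat E(k)} = \abs{\widehat\rho(k)}/\abs{k}$,
\[
\brak{k,tk}\,A(t,k,tk)\,\abs{\widehat E(k)} \;\sim\; \brak{t}\,\abs{A\rho(k)},
\]
giving the first power of $\brak{t}$. The derivative symbol satisfies $\abs{\eta-tk}\lesssim \brak{t}\brak{k-\ell,\eta-t\ell}$ in the $HL$ regime, and a Sobolev embedding on the low-frequency $\mathcal B(v^j f)$ factor yields the second power of $\brak{t}$. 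Schur's test in $\ell$, pairing $\ell^1$ on the Sobolev-embedded piece with $\ell^2$ on $A\rho$, then produces $\brak{t}^2\norm{A\rho}_{L^2_x}\norm{\mathcal B(v^j f)}_{L^2_{x,v}}^2$; the $HH$ piece is a simpler variant. For the inhomogeneous term $v^j E\cdot\grad_v f^0$, the same $E$-weight calculation together with the Gevrey product rule applied against the fixed rapidly-decaying Gevrey factor $v^j\grad_v f^0$ gives $\norm{\mathcal B[v^j E\cdot\grad_v f^0]}_{L^2_{x,v}}\lesssim \brak{t}\norm{A\rho}_{L^2_x}$, delivering the linear $\brak{t}\norm{A\rho}\norm{\mathcal B(v^j f)}$ contribution. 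Integrating in time and using $b>4$ to place $\brak{t}^{2-b}$ in $L^2_t$, Cauchy--Schwarz against $\norm{\brak{t}^b A\rho}_{L^2_{t,x}}$ then closes the bootstrap for $\eps$ small relative to $\delta$.
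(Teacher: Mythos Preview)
Your proposal is correct and follows the same approach as the paper: a Foias--Temam energy estimate on the profile with a paraproduct decomposition of the commutator, where $T_{HL}$ gives the rough $\brak{t}^2\norm{A\rho}$ term and $T_{LH}$ is absorbed by the $-\dot\lambda$ dissipation. One clarification on your $T_{LH}$ description: the weak norm $\norm{\brak{\grad}^{-\sigma+2+d/2}\mathcal{B}(v^jf)}$ arises not from splitting the $\brak{\grad}^s$ gain but from the low-frequency $E$ factor itself, via the restriction identity $\hat\rho(t,\ell)=\hat f(t,\ell,t\ell)$ and Sobolev trace in frequency; the $\brak{\grad}^s$ is instead split symmetrically as $\brak{\grad}^{s/2}\cdot\brak{\grad}^{s/2}$ across the two $f$ factors, so the resulting bound is cubic and structurally matches $-\dot\lambda\norm{\brak{\grad}^{s/2}\mathcal{B}(v^jf)}^2$.
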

\begin{proof} 
Taking the time derivative of $\mathcal{B} (v^j f)$ gives 
\begin{align*}
  \frac{1}{2}\frac{d}{dt} \norm{\mathcal{B} (v^j f)  }_{L^2_{x,v}}^2  & = - \dot{\lambda} \norm{\brak{\grad}^{s/2} \mathcal{B} (v^j f)}_{L^2_{x,v}} \\
  & \quad - \brak{ \mathcal{B} (v^j f), \mathcal{B} v^j E(z + tv) \cdot \grad_v f^0}  + \brak{ \mathcal{B} (v^j f), \mathcal{B} (v^j E(z + tv) \cdot (\grad_v - t \grad_z) f)}. 
\end{align*}
The first term associated with the linearized equation is dealt with using that
\begin{align*}
B(t,k,\eta) \lesssim B(t,k,\eta- tk) B(t,k,tk) 
\end{align*}
implies 
\begin{align*}
  \brak{ \mathcal{B} (v^j f), \mathcal{B} v^j E(z + tv) \cdot \grad_v f^0  }  & = \mathbf{Re} \int \mathcal{B} D_\eta^j \hat{f}(t,k,\eta), \mathcal{B}(t,k,\eta) \hat{E}(t,k) \cdot D_\eta^j \widehat{\grad_v f^0}(\eta-kt) d\eta \\
  & \lesssim \int \abs{\mathcal{B} D_\eta^j \hat{f}(t,k,\eta), \mathcal{B}(t,k,kt) \widehat{\rho}(t,k) \frac{i k}{\abs{k}^2} \cdot \mathcal{B}(t,k,\eta-kt) D_\eta^j \widehat{\grad_v f^0}(\eta-kt)} d\eta. 
\end{align*}
Observe that
\begin{align*}
\mathcal{B}(t,k,kt)\frac{1}{\abs{k}} \lesssim \brak{t} A(t,k,kt), 
\end{align*}
and therefore by Cauchy-Schwarz we have 
\begin{align*}
\brak{ \mathcal{B} (v^j f), \mathcal{B} v^j E(z + tv) \cdot \grad_v f^0  } \lesssim  \brak{t} \norm{A \rho}_{L^2_x}  \norm{\mathcal{B} (v^j f)}_{L^2_{x,v}}. 
\end{align*}
These leads to the second term on the RHS of the inequality in Lemma \ref{lem:BootfEest}. 

Next consider the nonlinear transport term. 
The following simple lemma is useful to deal with the anisotropy in $x$ and $v$. 
\begin{lemma} \label{lem:TripProd}
The following holds for suitable integrable $f,g,h$
\begin{align*}
\abs{\sum_{k,\ell} \int f(k,\eta) h(\ell) g(k-\ell,\eta-t\ell) d\eta } & \lesssim \norm{f}_{L^2} \norm{g}_{L^2} \norm{\brak{k}^{d/2+} h}_{L^2} \\
\abs{\sum_{k,\ell} \int f(k,\eta) h(\ell) g(k-\ell,\eta-t\ell) d\eta } & \lesssim \norm{f}_{L^2} \norm{\brak{k}^{d/2+}g}_{L^2} \norm{h}_{L^2}.
\end{align*}
\end{lemma}
\begin{proof}
By Cauchy Schwarz, 
\begin{align*}
\abs{\sum_{k,\ell} \int f(k,\eta) h(\ell) g(k-\ell,\eta-t\ell) d\eta } & \leq \sum_k \left( \int \abs{f(k,\eta)}^2 d\eta \right)^{1/2}\sum_\ell \abs{h(\ell)} \left( \int \abs{g(k-\ell,\eta-t\ell)}^2 d\eta \right)^{1/2} \\
& = \sum_k \left( \int \abs{f(k,\eta)}^2 d\eta \right)^{1/2}\sum_\ell \abs{h(\ell)} \left( \int \abs{g(k-\ell,\eta)}^2 d\eta \right)^{1/2} \\
& \leq \norm{f}_{L^2} \left( \sum_k \left( \sum_\ell \abs{h(\ell)}  \norm{g(k-\ell, \cdot)}_{L^2_\eta}^2 \right) \right)^{1/2}\\
& \lesssim \norm{f}_{L^2} \norm{g}_{L^2} \sum_\ell \abs{h(\ell)} \\ 
&  \lesssim \norm{f}_{L^2} \norm{g}_{L^2} \norm{\brak{k}^{d/2+} h}_{L^2},
\end{align*}
where the penultimate line followed by Young's inequality.
The proof of the second inequality is the same except one puts $g$ in $L^1_k$ instead of $h$. 
\end{proof}
Next, we introduce a commutator (note one also appears with the $v$ weights, a standard theme in kinetic theory), 
\begin{align}
  \brak{ \mathcal{B} (v^j f), \mathcal{B} v^j E(z + tv) \cdot (\grad_v - t \grad_z) f}  & = - \brak{ \mathcal{B} (v^j f), \mathcal{B} \left( E(z + tv) \cdot f \grad_v v^j\right)} \notag \\
  & \quad + \brak{ \mathcal{B} (v^j f), \mathcal{B} \left( E(z + tv) \cdot (\grad_v - t \grad_z) (v^j f) \right)} \notag \\
  & \quad - \brak{ \mathcal{B} (v^j f), E(z + tv) \cdot (\grad_v - t \grad_z) \mathcal{B}(v^j f)}. \label{ineq:nT}
\end{align}
The terms with the $\grad_v - t\grad_z$ is naturally the more difficult term, so let us treat this term first. 
Let us write it on the Fourier-side and apply the usual frequency decomposition as in Lemma \ref{lem:Gprod}  
\begin{align*}
\abs{\brak{ \mathcal{B} (v^j f), \mathcal{B}\left(E(z + tv) \cdot (\grad_v - t\grad_z) (v^j f)\right) } - \brak{ \mathcal{B} (v^j f), E(z + tv) \cdot \mathcal{B} (\grad_v - t\grad_z)(v^jf) } } & \\
& \hspace{-11cm} \lesssim  \sum_{k,\ell}\int \left(\mathbf{1}_{\abs{k-\ell, \eta-t\ell} < \frac{1}{8}\abs{\ell,\ell t}} + \mathbf{1}_{\abs{\ell, t\ell} < \frac{1}{8}\abs{-\ell,\eta-\ell t}} + \mathbf{1}_{\abs{k-\ell, \eta-t\ell} \approx \abs{\ell,\ell t}} \right)\\
& \hspace{-11cm} \quad \quad \times \abs{ \mathcal{B} D_\eta^j \hat{f}} \abs{\mathcal{B}(t,k,\eta) - \mathcal{B}(t,k-\ell,\eta-t\ell)} \abs{\hat{E}(t,\ell) \cdot (\eta - tk) D_\eta^j \hat{f}(t,k-\ell,\eta-\ell t) }  d\eta \\
& \hspace{-11cm} = T_{HL} + T_{LH} + T_{HH}.  
\end{align*}
For the $T_{HL}$ term we obtain the following, using that
\begin{align*}
\brak{\ell,\ell t} \abs{\hat{E}(t,\ell)} \lesssim \brak{t} \abs{\hat{\rho}(t,\ell)}, 
\end{align*}
and applying the Lemma \ref{lem:TripProd}, we obtain the fairly rough estimate
\begin{align*}
 \abs{T_{HL}} \lesssim  \brak{t}^2 \norm{A \rho} \norm{\mathcal{B} (v^j f)}_{L^2}^2, 
\end{align*}
which suffices for this term. 
The $T_{LH}$ term is more interesting.
The key is the following pointwise estimate which holds on the support of the integrand (using again Lemma \ref{lem:tri}) 
\begin{align*}
\abs{\mathcal{B}(t,k,\eta) - \mathcal{B}(t,k-\ell,\eta-t\ell)} \lesssim  \left(\frac{\lambda(t)}{\brak{k-\ell, \eta-t\ell}^{1-s}}  + \frac{1}{\brak{k-\ell,\eta-t\ell}}\right) e^{c \lambda(t) \brak{\ell,\ell t}^s} \mathcal{B}(t,k-\ell,\eta-t\ell). 
\end{align*}
This follows from the observation that for non-negative $x,y$ with $\abs{x-y} < x/2$ there holds for some $c \in (0,1)$
\begin{align*}
\abs{e^{\lambda x^s} - e^{\lambda y^s}} \leq \lambda \abs{x^s - y^s} e^{c \lambda \abs{x-y}^s} e^{\lambda x^s} \lesssim \frac{\abs{x-y} }{x^{1-s} + y^{1-s}} e^{c \lambda \abs{x-y}^s} e^{\lambda x^s}. 
\end{align*}
Therefore, using that $\hat{\rho}(t,\ell) = \hat{f}(t,\ell,\ell t)$ and Sobolev embedding in frequency to handle the Fourier restriction, we have by Lemma \ref{lem:TripProd} that, 
\begin{align*}
\abs{T_{LH}} \lesssim \left(\sum_{0 \leq j \leq m}\norm{\brak{\grad}^{-\sigma + d/2+2} \mathcal{B} (v^j f)}_{L^2_{x,v}}\right) \sum_{0 \leq j \leq m} \norm{\brak{\grad}^{s/2} \mathcal{B} (v^j f)}_{L^2_{x,v}}^2. 
\end{align*}
this suffices for the $T_{LH}$ term. The remainder $R$ term is strictly easier and can be treated as in e.g. the estimate on $T_{HH}$. 
This completes the treatment of the main contribution in \eqref{ineq:nT}. 

Let us return for completeness to the commutator with the velocity moment in \eqref{ineq:nT}.
By the usual frequency decomposition and Lemma \ref{lem:tri} this term is estimated via 
\begin{align*}
\abs{\brak{ \mathcal{B} (v^j f), \mathcal{B} E(z + tv) \cdot f \grad_v v^j}} & \lesssim \sum_{0 \leq p < j} \sum_{k,\ell}\int \abs{ \mathcal{B} D_\eta^j \hat{f}(t,k,\eta) \mathcal{B}(t,k,\eta) \hat{E}(t,\ell) D_\eta^p \hat{f}(t,k-\ell,\eta-\ell t) }  d\eta \\
& \hspace{-3cm}\lesssim \sum_{0 \leq p < j} \sum_{k,\ell}\int \abs{ \mathcal{B} D_\eta^j \hat{f} \mathcal{B}(t,k,t\ell) \hat{E}(t,\ell) e^{c\lambda(t) \brak{k-\ell,\eta-\ell t}} D_\eta^p \hat{f}(t,k-\ell,\eta-\ell t) }  d\eta \\
  & \hspace{-3cm} \quad + \sum_{0 \leq p < j} \sum_{k,\ell}\int \abs{ \mathcal{B} D_\eta^j \hat{f} e^{\lambda(t) \brak{k-\ell,\ell t}} \hat{E}(t,\ell) \mathcal{B}(t,k,\eta-t\ell) D_\eta^p \hat{f}(t,k-\ell,\eta-\ell t) }  d\eta. 
\end{align*}
Therefore, by Lemma \ref{lem:TripProd}, there holds 
\begin{align*}
\abs{\brak{ \mathcal{B} (v^j f), \mathcal{B} E(z + tv) \cdot f \grad_v v^j}} \lesssim  \brak{t}\norm{A\rho}_{L^2} \sum_{0 \leq p \leq j}\norm{\mathcal{B} (v^p f)}_{L^2}^2, 
\end{align*}
which is sufficient for our purposes.
This completes the proof of Lemma \ref{lem:BootfEest}. 
\end{proof}

Having completed Lemmas \ref{lem:HBoot} and \ref{lem:BootfEest}, Lemma \ref{lem:LDBoot} follows by choosing the universal parameters as indicated above, choosing $K_0$ depending only these parameters and the linearized Vlasov-Poisson equations, and then finally choosing  $\eps$ sufficiently small. 
As noted above, the estimates in \eqref{lem:LDBoot} imply the desired estimates in Theorem \ref{thm:MV}. 

\subsection{Final remarks on the proof} \label{sec:finalrmk} 
\begin{remark}
The argument I gave above uses several useful technical ideas from \cite{GN21}: specifically a much more natural treatment of the linearized problem (unlike the awkward arguments in \cite{BMM13} and \cite{MouhotVillani11}) and the clever trick of `sneaking' the $\brak{t}^b$ into the estimate on $A\rho$, which allowed us to obtain a uniform-in-$t$ estimate on $\mathcal{B} f$. In the proof of \cite{BMM13}, this estimate on $\mathcal{B} f$ was growing in time $\approx \brak{t}^7$, and so a third uniform-in-$t$ estimate was obtained instead on $\brak{\grad}^{-5} A f$. 
\end{remark}

\begin{remark}
If one wants to treat the analytic case one simply defines
\begin{align*}
A(t,k,\eta) = e^{\lambda(t) \brak{k,\eta}^s} e^{\mu(t) \brak{k,\eta}} \brak{k,\eta}^\sigma, 
\end{align*}
for suitably chosen $\mu,\lambda$. We leave this case as an exercise for interested readers.
Using analytic regularity does not `help' in any way. 
\end{remark}

\begin{remark}
While using the profile is very convenient for the proof, it does not seem necessary, as one could build the same norms in the $(x,v)$ variables using the vector field
\begin{align*}
Z = \grad_v  + t \grad_x, 
\end{align*}
which satisfies 
\begin{align*}
[\partial_t + v \cdot \grad_x, Z] = 0 
\end{align*}
(the other useful commuting vector field for the transport equation is $\grad_x$).
See \cite{CZ20,CLN21,WZZ20beta,bedrossian2022taylor} for proofs using this vector field to quantify phase mixing. 
The vector field method has some advantages, for example, when dealing with collision operators as in \cite{CLN21,bedrossian2022taylor}. 
Nevertheless, for this basic collisionless argument on $\mathbb T^d \times \mathbb R^d$, it is technically simpler to avoid vector fields and depend on the simpler approach of changing the variables to $z = x-tv$ and applying Fourier analysis-based arguments (mainly because working in Gevrey regularity on the physical side is generally more technical than on the Fourier-side). 
\end{remark}

\section{The problem on $(x,v) \in \mathbb R^d \times \mathbb R^d$} \label{sec:Rd}

There is, of course, clear physical reasons to consider the problem on the whole space.
In this case, one also has \emph{dispersive decay estimates}.
For example, now the kinetic free transport equation \eqref{def:KFT} itself admits types of Strichartz estimates (see e.g. \cite{bennett2014strichartz,keel1998endpoint,castella1996estimations}) which yield decay estimates of the type
\begin{align*}
\norm{g}_{L^p_t L^q_x L^r_v} \lesssim \norm{g_{in}}_{L^a_{x,v}},
\end{align*}
for $p,q,r,a$ satisfying a dimensional analysis constraint.
The low frequencies in the electric field constrain the fastest possible decay rate. 
Being rather wasteful, the following estimates give the best possible decay rates: if $g$ solves \eqref{def:KFT} with density fluctuation $\rho$ then for $m > d/2$, 
\begin{align}
\norm{\grad_x(-\Delta)^{-1} \rho}_{L^\infty} \lesssim \int_{\mathbb R^d} \frac{1}{\abs{k}} \abs{\widehat{g}_{in}(k,kt)} dk & \lesssim \frac{1}{\brak{t}^2} \norm{g_{in}}_{H^{2}_{m}} \label{ineq:PoissonD}\\
\norm{\grad_x(I - \Delta)^{-1} \rho}_{L^\infty} \lesssim \int_{\mathbb R^d} \abs{k} \abs{\widehat{g}_{in}(k,kt)} dk & \lesssim \frac{1}{\brak{t}^4} \norm{g_{in}}_{H^{4}_{m}}. 
\end{align}
Hence, unlike the $\mathbb T^d$ case, there is a quite distinct difference between the screened and unscreened Poisson problems.
For the free transport problem, we still obtain a clear Landau damping effect as seen by the rapid decay of higher frequencies, for example,
\begin{align}
\norm{\grad_x(-\Delta)^{-1} \brak{\grad_x,t\grad_x}^\sigma \rho}_{L^\infty} \lesssim  \frac{1}{\brak{t}^2} \norm{g_{in}}_{H^{\sigma + 2}_{m}}. \label{ineq:LDrd}
\end{align}

If one perturbs Vlasov-Poisson around the zero solution, i.e. $f^0 = 0$ and $n_0 = 0$, then one has a ``near-vacuum'' problem in which the linearized Vlasov-Poisson equations are simply the kinetic free transport equation.
Global existence and dispersive decay in 3d for small data was proved in the classical paper of Bardos and Degond \cite{BardosDegond1985} using a largely Lagrangian method.
Refinements to these kinds of near-vacuum results for Vlasov--Poisson can be found in for example \cite{flynn2021scattering,schaeffer2021improved,ionescu2022asymptotic}.
For other physical models, such as Vlasov-Maxwell or Vlasov-Einstein, a variety of other related `dispersive scattering' results can be found (see e.g. \cite{bigorgne2020sharp,glassey1987absence,taylor2017global,fajman2021stability} and the references therein to name a few). 
These papers do not simultaneously obtain phase mixing estimates, i.e. they do not consider $H^\sigma$-scattering estimates to the free transport profile for larger $\sigma$ provided initial regularity on the solution.
Similarly, they do not obtain the characteristic Landau damping of the force field, i.e. they do not necessarily obtain estimates of the form \eqref{ineq:LDrd}. 
To my knowledge, this problem remains open in general, however it may not be too difficult. 

Near-vacuum problems are very relevant for gravitational interactions.
However, for electrostatic interactions, the perturbation of homogeneous equilibria $f^0(x,v) = f^0(v)$ with $\int_{\mathbb R^d} f^0(v) dv = n_0$, which would correspond to the localized perturbation of an infinitely-extended plasma, is the most physically relevant question. 
This case was considered by both Vlasov \cite{Vlasov-damping} and Landau \cite{Landau46} and is the setting generally discussed in physics texts \cite{Stix}.
However, note that at the linearized level, the main difference between $\mathbb R^d$ and $\mathbb T^d$ is only a matter of whether one considers $x$-frequencies $k \in \mathbb R^d_\ast$ or if one only considers $x$-frequencies $\abs{k} \gtrsim 1$.
The case of screened Poisson equations, i.e. $\widehat{W}(k) = (1 + \abs{k}^2)^{-1}$ (or any other similarly-signed $W \in L^1$), then the linearized theory on $\mathbb T^d$ can be extended to $\mathbb R^d$ without too much difficulty \cite{BMM16}.  For the nonlinear problem in $d \geq 3$,  one can in fact prove not only dispersive scattering results but also Landau damping results in $H^\sigma$ as in \eqref{ineq:LDrd} for the nonlinear Vlasov--Poisson equations near a homogeneous equilibrium which satisfies the Penrose stability criterion, such as a Maxwellian. This was first proved via mostly Eulerian methods by Masmoudi, Mouhot, and myself \cite{BMM16}.
It was later shown that just dispersive scattering could be obtained using mainly Lagrangian methods by Han-Kwan, Nguyen and Rousset \cite{han2021asymptotic}; one can upgrade their proof also to obtain the higher regularity Landau damping and scattering as in \eqref{ineq:LDrd} \cite{nguyen2020derivative} (see also \cite{huang2022sharp}). 
The most important point here is that you can obtain the analogue of Theorem \ref{thm:MV} \emph{just in Sobolev spaces!} 
The Eulerian proof we gave in \cite{BMM16} can be simplified to mirror the proof of Theorem \ref{thm:MV} which I wrote but two major differences: (A) one needs $L^\infty$ estimates in both $x$ and $v$ frequencies for $\rho$ and $f$ and (B) the treatment of the $T_{HL}$ term in the estimates on the density is very different, and in particular, takes advantage of the fact that plasma echo chains occur along lines of Fourier modes (in $x$). Lines have a locally positive measure set in the lattice, but they are a co-dimension $2$ set in 3D, and so are in effect very small, which allows to integrate the kernels in \eqref{ineq:Schur} without using Gevrey regularity.  
The Lagrangian proof of \cite{han2021asymptotic} is simpler, and emphasizes that you don't \emph{need} to keep track of phase mixing and Landau damping in order to close the nonlinear perturbation arguments, which potentially makes it more robust to studying more complicated problems. 
The case $d = 2$, which is formally critical from the standpoint of integrating the plasma echoes, was recently solved in \cite{huang2022nonlinear}. 

The case of unscreened Poisson equations, i.e. $\widehat{W}(k) = \abs{k}^{-2}$ remains largely open in the nonlinear case.
Vlasov predicted that there should be neutral modes, i.e. non-decaying plane waves that propagate through the plasma at very small $k$, which match dispersion relation predicted by the linearized Euler--Poisson equations \cite{Vlasov-damping,Stix} for the Langmuir waves, the most basic of all plasma waves (the so-called ``Bohm-Gross dispersion relation'').
Landau showed this to be incorrect, however Landau's work did predict that these Langmuir waves damp very slowly for $k \to 0$ \cite{Landau46} (specifically, an exponential damping rate of $e^{- t \delta \abs{k}^{-2} e^{-C\abs{k}^{-2}}}$).
It was proved by Glassey and Schaeffer \cite{Glassey94,Glassey95} that the Penrose condition fails on $x \in \mathbb R$ and so the Landau damping can be very slow. 
In simultaneous and independent works, Masmoudi, Mouhot and myself \cite{BMM20} and Han-Kwan, Nguyen, and Rousset \cite{han2021linearized}, it was shown that for the linearized problem, one can basically decompose the electric field into two main contributions:
\begin{align*}
E(t,x) = E_{KG} + E_{LD},
\end{align*}
where the $E_{KG}$ contribution is essentially a solution to a very-weakly-damped Klein-Gordon-type dispersive equation (satisfying Vlasov's prediction of the Bohm-Gross dispersion relation to leading order) and $E_{LD}$ satisfies the same Landau damping/dispersive decay estimates as the \emph{screened Vlasov case}, i.e. \eqref{ineq:LDrd}.
Dispersive estimates for the damped Klein-Gordon equation implies a decay like (see \cite{BMM20} for details) the following for $p \in [2,\infty)$
\begin{align*}
\norm{E_{KG}}_{L^p} \lesssim \frac{1}{\brak{t}^{\frac{3}{2} - \frac{3}{p}}}.
\end{align*}
Interestingly, this means that neither $E_{KG}$ nor $E_{LD}$ behave as it would in the free transport case, which would be \eqref{ineq:PoissonD}. 

The only nonlinear result for $\widehat{W}(k) = \abs{k}^{-2}$ is the recent work of Ionescu, Pausader, Wang, and Widmayer \cite{ionescu2022nonlinear} who consider the Poisson equilibrium $f^0(v) = (1 + \abs{v}^{2})^{-2}$ in $d=3$. 
In this case, the linearized problem can be solved exactly (see Appendix \ref{sec:FourierLaplace}), and the Langmuir waves encoded in $E_{KG}$ still decay with an exponential decay rate like $\mathcal{O}( e^{-\delta\abs{kt}} )$, which means the Langmuir waves at least satisfy \eqref{ineq:PoissonD}. This is still too slow to close a simple argument however.
The authors use a Lagrangian approach and normal form transformation to take advantage of the time-oscillations to recover.  

Finally, we would like to mention the work of Pausader and Widmayer \cite{pausader2021stability} on stability and dispersive scattering near a point charge on $\mathbb R^d$ with electrostatic interactions.
This can be considered a small step towards understanding localized equilibria, with the ultimate goal being galaxy-type solutions in the gravitational case. 

\section{Open problems} \label{sec:open}

Let me conclude these short notes with a list of open problems, which likely range in difficulty from approachable by motivated graduate students, to others which may be out of reach for the near future. I will try to focus on problems of the former variety.
I have tried to organize the problems into rough categories, but there is inevitably a lot of overlap. 
For simplicity, I mainly discuss problems regarding Vlasov--Poisson (with or without screening) and Vlasov--Poisson--Landau.
Similarly all problems, unless specifically stated otherwise, are about small perturbations of non-zero, homogeneous equilibria $f^0$.
There are only a few works which begin the study of Landau damping near inhomogeneous equilibria in the Vlasov equations or closely related equations; see for example \cite{despres2019scattering,faou2021linear,pausader2021stability}. 
However, there remains a great deal to do there even for the linearized problems. 
Including magnetic fields is another important direction I will not discuss below, despite being of significant physical importance (in most realistic settings plasmas are permeated by relatively large magnetic fields).  
For mathematical works on the linearized Vlasov-Poisson equations on $\mathbb T^d \times \mathbb R^d$ with an external magnetic field, see \cite{bedrossian2020linearized} and \cite{charles2021magnetized}.
Many problems, such as understanding the linearized problem on $\mathbb R^d \times \mathbb R^d$, or understanding almost anything about the nonlinear dynamics remain open.

\textbf{Nonlinear dynamics and refined regularity questions in $(x,v) \in \T^d \times \mathbb R^d$ without collisions:}

\begin{itemize}
\item Prove the sharp $s=1/3$, $\lambda \gtrsim \eps^{1/3}$ in $d > 1$ for Vlasov--Poisson. 
\item Prove injectivity of the `wave maps' in Gevrey $s > 1/3$ for Vlasov--Poisson (i.e. the analogue of \cite{CagliotiMaffei98}). Does injectivity of the `wave maps' hold in even lower regularity (even if surjectivity may not)?
\item Can one construct initial data arbitrarily small in $H^s$ with infinitely many plasma echoes? 
\item Can one extend the results of \cite{B20} for $f^0$ a size one Maxwellian? What about all the way up to Gevrey $s < 1/3$? What about Gevrey $s = 1/3$ with $\lambda \ll \eps^{1/3}$?
\item What happens to neutrally stable $f^0_\infty$, i.e. one which has an embedded neutral eigenmode and which is borderline unstable? Does one get nonlinear instabilities? What happens to the plasma echoes in this case?
\item Can one construct initial data the undergoes a secondary instability at the end of the plasma echo chain that leads to trapped particle orbits or a transition to some other highly nonlinear phenomena?
\end{itemize}

\textbf{Stability threshold problems in $(x,v) \in \T^d \times \mathbb R^d$ with collisions:}

The work of Chaturvedi, Luk, and Nguyen \cite{CLN21} proved that for the collisions modeled by the Landau collision operator with collision frequency $0 < \nu \ll 1$, Sobolev initial data of size $\ll \nu^{1/3}$ leads to uniform Landau damping. See also earlier work which proved a similar result for Vlasov-Poisson-(nonlinear)-Fokker-Planck \cite{B17} and work on the linearized Vlasov-Poisson-Fokker-Planck by Tristani \cite{Tristani2017landau}.
This kind of `stability threshold' result is entirely analogous to the line of research on the Navier-Stokes equations near shear flows and vortices; see for example \cite{TTRD93,ReddySchmidEtAl98,BGM15I,BGM15II,BGM15III,Gallay18,bedrossian2019stability,BVW16,WZZ20,WZ21,CLWZ20,masmoudi2022stability,li2022asymptotic,masmoudi2020enhanced,li2022new} and the references therein. 

\begin{itemize}
\item Can one prove that the $O(\nu^{1/3})$ stability threshold is sharp in $H^s$ for $s \gg 1$? 
\item Can one obtain stability thresholds in much lower regularity, for example  $L^1 \cap L^\infty$ or $L^1 \cap L^p$ initial data (along with velocity moments), at least for linear Fokker-Planck collisions? Could one prove these were sharp? 
\item What about stability thresholds in Gevrey $0 < s <1/3$?
\item Can one prove uniform Landau damping/enhanced dissipation in $s \geq 1/3$ as in \cite{BMV14} for the Navier-Stokes equations? 
\end{itemize}

\textbf{Problems in $(x,v) \in \mathbb R^d \times \mathbb R^d$:}

\begin{itemize}
\item Can one extend the results of \cite{BMM16,han2021asymptotic,nguyen2020derivative,huang2022nonlinear} to the case of the full Maxwell-Boltzmann law \eqref{eq:MB}? 
\item What happens in $\mathbb R \times \mathbb R$ even with screening? This case is actually quite physically relevant as it represents a subset of the possible dynamics of plasmas along external magnetic fields. 
\item What happens with collisions on $\mathbb R^d$, at least for the linearized problems? See for example the work \cite{bedrossian2022taylor} on the Boltzmann and Landau equations (but no mean-field interactions).
\item What happens in Vlasov--Poisson for $\mathbb R^d \times \mathbb R^d$ for $d \geq 2$ with Maxwellian backgrounds? Is there a sense in which the nonlinear dynamics agree to leading order with the nonlinear dynamics of Euler--Poisson at very low frequencies?  
\item Can one prove injectivity of wave maps for $\mathbb R^d \times \mathbb R^d$ for $d \geq 2$ with screening (and eventually without)?
\end{itemize}

\appendix 
\section{Fourier transform} \label{sec:Fourier}
For $f(x) \in L^1$, we define the Fourier transform $\mathcal{F}[f](\xi) = \hat{f}(\xi)$ as
\begin{align*}
	\mathcal{F}[f](\xi) = \hat{f}(\xi) & = \frac{1}{(2\pi)^{d/2}} \int_{\Real^d} e^{-i x \cdot \xi } f(x) \dd x. 
\end{align*}
The inverse Fourier transform is then given fia via
\begin{align*}
	\mathcal{F}^{-1}[f](x) = \check{f}(x) = \frac{1}{(2\pi)^{d/2}}\int_{\Real^d} e^{i x \cdot \xi } f(\xi) \dd\xi. 
\end{align*}
With these conventions we have
\begin{align*}
\widehat{fg} & = \frac{1}{(2\pi)^{d/2}}\hat{f} \ast \hat{g} \\ 
(\widehat{\grad f})(\xi) & = i\xi \hat{f}(\xi). 
\end{align*}	
The following important identities are used to extend the Fourier transform from $L^1(\Real^d) \cap L^2(\Real^d)$ to an isometry on $L^2(\Real^d)$; see e.g.~\cite{Folland99}. 
\begin{theorem}
The Fourier transform extends to an isometry from $L^2(\Real^d) \to L^2(\Real^d)$. In particular if $f,g \in L^2$, then \emph{Parseval's identity} holds
\begin{align*}
\brak{f,g}_{L^2} = \brak{\widehat{f},\widehat{g}}_{L^2}.
\end{align*}
and we have \emph{Plancherel's identity}
\begin{align*}
\norm{f}_{L^2} = \Vert \hat{f}\Vert_{L^2}. 
\end{align*} 
\end{theorem}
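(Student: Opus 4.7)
The plan is to first establish all three identities on the Schwartz class $\mathcal{S}(\Real^d)$, where $\hat{f}$ is defined by an absolutely convergent integral and $\mathcal{F}$ maps $\mathcal{S}$ into itself, and then extend everything by density to $L^2$. The key intermediate step is the Fourier inversion formula on $\mathcal{S}$; once that is in hand, Parseval, Plancherel, and the extension to $L^2$ all follow by soft arguments.

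First I would prove Fourier inversion: $\check{\hat{f}} = f$ pointwise for $f \in \mathcal{S}$. The natural device is Gaussian regularization. Insert a cutoff $e^{-\epsilon|\xi|^2}$ into the inversion integral for $\check{\hat{f}}(x)$ and apply Fubini; using the explicit Fourier transform of a Gaussian, $\widehat{e^{-a|\cdot|^2}}(\eta) = (2a)^{-d/2} e^{-|\eta|^2/(4a)}$, the regularized integral rewrites as the convolution of $f$ with a Gaussian mollifier of mass one. As $\epsilon \to 0$ this converges to $f(x)$ pointwise (in fact uniformly on compacts), while dominated convergence, using the rapid decay of $\hat{f} \in \mathcal{S}$, identifies the limit on the other side as $\check{\hat{f}}(x)$.

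With inversion on $\mathcal{S}$, Parseval on $\mathcal{S}$ follows quickly from the multiplication formula $\int \hat{f}\, g\, d\xi = \int f\, \hat{g}\, dx$, itself an immediate application of Fubini for Schwartz functions. Choosing $g = \overline{\hat{h}}$ for another Schwartz $h$, one computes $\hat{g}(x) = \overline{h(x)}$ using inversion together with the symmetry $\hat{\bar{u}}(\eta) = \overline{\hat{u}(-\eta)}$, and obtains $\brak{\hat{f},\hat{h}}_{L^2} = \brak{f,h}_{L^2}$. Taking $h = f$ yields Plancherel.

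The passage to $L^2$ is then routine functional analysis. The Fourier transform, restricted to $\mathcal{S}$ equipped with the $L^2$ norm, is a densely defined linear isometry, hence extends uniquely to a bounded linear isometry $\mathcal{F}: L^2(\Real^d) \to L^2(\Real^d)$; the same argument applied to the inverse map produces a continuous extension of $\mathcal{F}^{-1}$, and the identities $\mathcal{F}^{-1}\mathcal{F} = \mathcal{F}\mathcal{F}^{-1} = \mathrm{Id}$ hold on $\mathcal{S}$ and pass to the closure by continuity, giving surjectivity and hence the full isometry statement. The only genuine technical point is the Gaussian regularization used to establish inversion on $\mathcal{S}$; all remaining steps are a bookkeeping of Fubini applications for Schwartz functions together with a standard density argument.
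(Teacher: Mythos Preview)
Your argument is correct and is essentially the standard textbook proof. Note, however, that the paper does not actually give a proof of this statement: it is stated in the appendix as background with a reference to Folland's real analysis text, so there is no ``paper's own proof'' to compare against. Your Gaussian-regularization route to inversion on $\mathcal{S}$, followed by the multiplication formula and density extension, is precisely the kind of argument one finds in the cited reference.
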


For $m \in L^1_{loc}$ and $f\in {\mathcal S}$, we define the \emph{Fourier multiplier} $m(\grad)$ as the operator
\begin{align*}
	 m(\grad) f  = {\mathcal F}^{-1}[ m(i\xi) \hat{f}(\xi) ]\,.
\end{align*}
A useful application of Plancherel's identity is the $L^2$ boundedness of bounded Fourier multipliers. 
\begin{lemma}
Let $m \in L^\infty(\R^d)$. Then the corresponding Fourier multiplier $m(\grad)$ extends to a bounded linear operator on $L^2$ satisfying
\begin{align*}
\norm{m(\grad)f}_{L^2} \leq \norm{m}_{L^\infty} \norm{f}_{L^2}. 
\end{align*}
\end{lemma}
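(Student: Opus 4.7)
The plan is to reduce the estimate to a pointwise inequality on the Fourier side, exploiting the fact that the Fourier transform is an isometry on $L^2$ (Plancherel's identity, stated just above in the excerpt). This converts the question of $L^2$-boundedness of the multiplier operator into the trivial $L^\infty \cdot L^2 \to L^2$ bound for pointwise multiplication.

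Concretely, I would proceed in three short steps. First, take $f \in \mathcal{S}(\mathbb R^d)$, for which $m(\grad)f = \mathcal{F}^{-1}[m(i\xi)\hat f(\xi)]$ is well-defined by the definition given in the excerpt (using only that $m \in L^1_{\mathrm{loc}}$ and that $\hat f$ is Schwartz, hence rapidly decaying, so the product $m(i\xi)\hat f(\xi)$ lies in $L^1 \cap L^2$). Second, apply Plancherel's identity to obtain
\begin{align*}
\norm{m(\grad)f}_{L^2} = \norm{\mathcal{F}^{-1}[m(i\xi)\hat f(\xi)]}_{L^2} = \norm{m(i\xi)\hat f(\xi)}_{L^2_\xi},
\end{align*}
and then use the pointwise a.e.\ bound $|m(i\xi)\hat f(\xi)| \leq \norm{m}_{L^\infty}|\hat f(\xi)|$ together with a second application of Plancherel to conclude
\begin{align*}
\norm{m(\grad)f}_{L^2} \leq \norm{m}_{L^\infty}\norm{\hat f}_{L^2} = \norm{m}_{L^\infty}\norm{f}_{L^2}.
\end{align*}

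Third, to promote the estimate from $\mathcal{S}$ to all of $L^2$, I would invoke the density of $\mathcal{S}(\mathbb R^d)$ in $L^2(\mathbb R^d)$: since $m(\grad)$ is bounded on the dense subspace $\mathcal{S}$ with operator norm at most $\norm{m}_{L^\infty}$, it extends uniquely to a bounded linear operator on $L^2$ satisfying the same bound. There is really no obstacle here — the whole argument is routine once Plancherel is available, and the only subtlety (if one wants to call it that) is making sure the definition of $m(\grad)$ on general $L^2$ inputs is consistent with the Schwartz-class definition, which is automatic from the density and continuity argument.
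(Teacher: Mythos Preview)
Your proof is correct and is exactly the argument the paper has in mind: the lemma is stated in the appendix without proof, introduced simply as ``a useful application of Plancherel's identity,'' and your three steps (Plancherel, pointwise bound, density extension) constitute precisely that application.
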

Characterizing the $L^p$ boundedness of Fourier multipliers is significantly more difficult, see Theorem~\ref{thm:CZmult} below. 
\begin{theorem}[Mikhlin multiplier theorem] \label{thm:CZmult}
	Let $m$ be $C^{d+2}$ away of the origin such that
	\begin{align*}
		\abs{\partial^\alpha m(\xi)} \lesssim_{\alpha} \abs{\xi}^{-\abs{\alpha}},
	\end{align*}
	holds for all $\alpha \in \Naturals^d$ with $|\alpha| \leq d+2$.
	Then, for any $1< p< \infty$, the operator $f \mapsto m(\grad) f$ defines a bounded linear operator $L^p \to L^p$  and 
	\begin{align*}
		\norm{m(\grad)f}_{L^p} \lesssim_p \norm{f}_{L^p}. 
	\end{align*}
\end{theorem}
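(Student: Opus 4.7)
\textbf{Proof plan for Theorem~\ref{thm:CZmult}.}
The plan is to identify $m(\nabla)$ with a Calderón--Zygmund (CZ) convolution operator and invoke the standard CZ / Marcinkiewicz interpolation machinery. Concretely, write $Tf = K \ast f$ with the distributional kernel $K = \mathcal{F}^{-1}[m(i\cdot)]$. We already have $L^2 \to L^2$ boundedness of $T$ with norm $\leq \|m\|_{L^\infty}$ from the preceding lemma (and here $\|m\|_{L^\infty} \lesssim 1$ by the hypothesis with $\alpha=0$). The two remaining ingredients to run the CZ theorem are (i) pointwise kernel estimates away from the origin,
\begin{align*}
|K(x)| \lesssim |x|^{-d}, \qquad |\nabla K(x)| \lesssim |x|^{-d-1},
\end{align*}
and (ii) a weak $(1,1)$ bound, from which Marcinkiewicz interpolation yields $L^p \to L^p$ for $1<p\leq 2$ and duality (applied to the multiplier $\overline{m(-\xi)}$, which satisfies the same hypotheses) gives $2 \leq p < \infty$.

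The main step is (i), which I would carry out via a Littlewood--Paley decomposition. Fix a smooth radial bump $\varphi$ supported in $\{1/2 \leq |\xi| \leq 2\}$ with $\sum_{j\in\Z} \varphi(2^{-j}\xi) = 1$ for $\xi \neq 0$, and set $m_j(\xi) = m(\xi)\varphi(2^{-j}\xi)$, $K_j = \mathcal{F}^{-1}[m_j]$. The Mikhlin hypothesis forces
\begin{align*}
|\partial^\alpha m_j(\xi)| \lesssim_\alpha 2^{-j|\alpha|} \mathbf{1}_{|\xi|\sim 2^j}, \qquad |\alpha|\leq d+2.
\end{align*}
A direct estimate of $K_j(x) = (2\pi)^{-d/2}\int e^{ix\cdot\xi} m_j(\xi)\,d\xi$, combined with integration by parts in $\xi$ up to $d+2$ times on the region where $|x||\xi| \gtrsim 1$, yields
\begin{align*}
|K_j(x)| \lesssim 2^{jd} \min\bigl(1, (2^j|x|)^{-d-2}\bigr), \qquad |\nabla K_j(x)| \lesssim 2^{j(d+1)} \min\bigl(1, (2^j|x|)^{-d-2}\bigr).
\end{align*}
Summing the geometric series in $j$ (splitting at $2^j \sim |x|^{-1}$) gives the desired pointwise bounds on $K$ and $\nabla K$ away from the origin.

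Once (i) is in hand, step (ii) is the classical CZ decomposition: given $f \in L^1$ and $\lambda>0$, perform a Calderón--Zygmund stopping-time decomposition $f = g + b$ at height $\lambda$ with $b = \sum_Q b_Q$ supported in a disjoint cube collection $\{Q\}$ with $\sum |Q| \lesssim \lambda^{-1}\|f\|_{L^1}$. The good part $g$ satisfies $\|g\|_{L^2}^2 \lesssim \lambda \|f\|_{L^1}$, so $|\{|Tg|>\lambda/2\}| \lesssim \lambda^{-1}\|f\|_{L^1}$ by Chebyshev and the $L^2$ bound. For the bad part, on the complement of $\cup_Q Q^\ast$ (the doubled cubes) one uses the cancellation $\int b_Q = 0$ together with the Hörmander-type estimate on $K$ derived from $|\nabla K(x)|\lesssim |x|^{-d-1}$ to get $\|T b\|_{L^1(\cup Q^\ast)^c} \lesssim \|f\|_{L^1}$. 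Together these yield the weak $(1,1)$ bound, and Marcinkiewicz + duality complete the proof.

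The main obstacle (and the only genuinely technical part) is the kernel estimate in step (i); everything after that is the standard CZ template. If one wants to avoid Littlewood--Paley, an alternative is to estimate $K$ directly by splitting the frequency integral into $|\xi| \leq |x|^{-1}$ and $|\xi| > |x|^{-1}$, bounding the first region trivially by volume and integrating by parts $d+2$ times in the second to exploit $|\partial^\alpha m(\xi)| \lesssim |\xi|^{-|\alpha|}$; this gives the same pointwise bounds more directly, at the cost of slightly more careful bookkeeping near the origin.
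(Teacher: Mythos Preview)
Your outline is correct and is precisely the standard Calder\'on--Zygmund argument. However, the paper does not actually prove this theorem: immediately after the statement it simply writes ``See e.g.\ \cite{MuscaluSchlag13} for a proof, which relies on singular integral theory,'' and moves on to the Riesz transform corollary. So there is nothing to compare against beyond the fact that your sketch is exactly the singular integral theory route the paper is alluding to.
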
 
See e.g. \cite{MuscaluSchlag13} for a proof, which relies on singular integral theory. 
\begin{corollary} \label{cor:RT} 
	Let $R_i  = \partial_{x_i} (-\Delta)^{-1/2}$ be the $i$-th Riesz transform, or equivalently the Fourier multiplier with symbol $m(\xi) = i\xi_i \abs{\xi}^{-1}$ . 
	Then $R_i$ extends to a bounded operator $L^p \to L^p$ 
	\begin{align*}
		\norm{R_i f}_{L^p} & \lesssim \norm{f}_{L^p}
	\end{align*} 
	 for $1 < p < \infty$.
\end{corollary}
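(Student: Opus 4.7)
The plan is to identify the Fourier symbol of the Riesz transform and verify it satisfies the hypotheses of the Mikhlin multiplier theorem (Theorem \ref{thm:CZmult}), so the corollary follows immediately by applying that theorem.

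First I would record the symbol. Using the identities $(\widehat{\partial_{x_i} f})(\xi) = i\xi_i \hat f(\xi)$ and $\widehat{(-\Delta)^{-1/2} f}(\xi) = |\xi|^{-1} \hat f(\xi)$, we see that $R_i$ is the Fourier multiplier with symbol
\begin{align*}
m(\xi) = \frac{i \xi_i}{|\xi|}, \qquad \xi \in \R^d \setminus \{0\}.
\end{align*}
This symbol is $C^\infty$ away from the origin and positively homogeneous of degree $0$, i.e. $m(\lambda \xi) = m(\xi)$ for all $\lambda > 0$.

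Next I would verify the Mikhlin symbol bound. Since $m$ is smooth on $\R^d \setminus \{0\}$ and homogeneous of degree $0$, for any multi-index $\alpha \in \mathbb N^d$ the derivative $\partial^\alpha m$ is smooth on $\R^d \setminus \{0\}$ and positively homogeneous of degree $-|\alpha|$. Since the unit sphere $\mathbb S^{d-1}$ is compact and $\partial^\alpha m$ is continuous on it, we have
\begin{align*}
C_\alpha := \sup_{\omega \in \mathbb S^{d-1}} |\partial^\alpha m(\omega)| < \infty.
\end{align*}
By homogeneity, writing $\xi = |\xi| \omega$ with $\omega \in \mathbb S^{d-1}$, we obtain
\begin{align*}
|\partial^\alpha m(\xi)| = |\xi|^{-|\alpha|} |\partial^\alpha m(\omega)| \leq C_\alpha |\xi|^{-|\alpha|},
\end{align*}
which is the Mikhlin condition, valid in particular for all $|\alpha| \leq d+2$.

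Applying Theorem \ref{thm:CZmult} then gives $\|R_i f\|_{L^p} \lesssim_p \|f\|_{L^p}$ for every $1 < p < \infty$. There is no real obstacle; the only point worth noting is that since $m$ is in fact bounded on $\R^d$, the $L^2$ boundedness was already immediate from Plancherel and the bounded-multiplier lemma, so the content of the corollary really lies in the $p \neq 2$ cases, which is precisely what Mikhlin's theorem delivers.
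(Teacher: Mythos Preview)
Your proof is correct and is exactly the argument the paper intends: the corollary is stated immediately after the Mikhlin multiplier theorem with no separate proof, so the implicit reasoning is precisely to check that the homogeneous-of-degree-zero symbol $m(\xi) = i\xi_i/|\xi|$ satisfies the Mikhlin bounds $|\partial^\alpha m(\xi)| \lesssim_\alpha |\xi|^{-|\alpha|}$ and apply Theorem~\ref{thm:CZmult}.
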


\section{Sobolev spaces}
For $\alpha \in \Naturals^d$, $\alpha = (\alpha_1, \alpha_2, \ldots, \alpha_d)$, we use the multi-index notation: 
\begin{align*}
\partial^\alpha f = \partial_{x_1}^{\alpha_1} \partial_{x_2}^{\alpha_2} \ldots \partial_{x_d}^{\alpha_d} f,  
\end{align*}
and denote $\abs{\alpha} = \sum_{i = 1}^d \abs{\alpha_i}$. 
For any integer $0 \leq n < \infty$ and index $p \in [1,\infty]$, define the (inhomogeneous) Sobolev norm
\begin{align*}
\norm{f}_{W^{n,p}} = \left(\sum_{\alpha \in \Naturals^d : \abs{\alpha} \leq n} \norm{\partial^\alpha f}^p_{L^p}\right)^{1/p}
\end{align*}
and the homogeneous Sobolev norm 
\begin{align*}
\norm{f}_{\dot{W}^{n,p}} = \left(\sum_{\alpha \in \Naturals^d : \abs{\alpha} = n} \norm{\partial^\alpha f}_{L^p}^p \right)^{1/p}.
\end{align*}
The Sobolev space $W^{n,p}$ (or $\dot{W}^{n,p}$) are then defined as the closure of $C^\infty_c(\Real^d)$ (infinitely smooth, compactly supported functions) with respect to the  $W^{n,p}$ norm (or $\dot{W}^{n,p}$ norm) defined above.
When $p=2$, we use the notation $H^n = W^{n,2}$ and $\dot{H}^n = \dot{W}^{n,2}$, and note that these are Hilbert spaces, equipped with the inner product 
\begin{align*}
\inner{f,g}_{H^s} & = \sum_{\alpha \in \Naturals^d : \abs{\alpha} \leq n} \int_{\Omega} \partial^\alpha f \partial^\alpha g \dd x \\ 
\inner{f,g}_{\dot{H}^s} & = \sum_{\alpha \in \Naturals^d : \abs{\alpha} = n} \int_{\Omega} \partial^\alpha f \partial^\alpha g \dd x.  
\end{align*}
Using the Fourier transform we can find a convenient norm which is equivalent to the original $H^s$ norm and also extend the definition of $H^s$ to all $s\in \Real$. 
From the properties of the Fourier transform, one can show that for all $k \in \Naturals$, there holds 
\begin{align*}
	\norm{f}_{W^{k,2}} \approx \norm{\brak{\grad}^k f}_{L^2}. 
\end{align*}
for all $f \in C^\infty_c$, where we recall that $\brak{\grad} = (I -\Delta)^{1/2}$ is the Fourier multiplier with symbol $(1+|\xi|^2)^{1/2}$.
In what follows, we slightly abuse notation and  denote $\forall s \in \Real$
\begin{align*}
	\norm{f}_{H^s} & = \norm{\brak{\grad}^s f}_{L^2} \\ 
	\norm{f}_{\dot{H}^s} & = \norm{\abs{\grad}^s f}_{L^2} 
\end{align*}
where   $\abs{\grad} = (-\Delta)^{1/2}$ is the Fourier multiplier with symbol $ |\xi|$. 
The Fourier characterization similarly provides a corresponding Fourier representation for an equivalent inner product:
\begin{align*}
\brak{f,g}_{H^s} = \int_{\R^d} (1+ \abs{\xi}^{2})^s\widehat{f}(\xi) \overline{\widehat{g}(\xi)} \dd\xi = \brak{ \brak{\grad}^{s}f, \brak{\grad}^{s}g }_{L^2}. 
\end{align*}
We will also use the velocity-weighted Sobolev spaces with integer weights $m$:
\begin{align*}
\norm{f}_{H^s_m} = \norm{\brak{v}^m \brak{\grad}^\sigma f}_{L^2} \approx_{\sigma,m} \sum_{0 \leq \abs{j} \leq m}\norm{\brak{\grad}^\sigma (v^j f) }_{L^2}. 
\end{align*}

\section{Sobolev inequalities}

In this section we recall a variety of inequalities that relate different $L^p$ and $W^{s,q}$ spaces. 
For a detailed account, we refer the reader for instance to~\cite{AdamsFournier03,Evans98}.
\begin{proposition} \label{prop:HsLinf}
For  $s > d/2$ and all $f \in H^s$, there holds 
\begin{align}
	\norm{f}_{L^\infty} & \lesssim_{s,d} \norm{f}_{H^s}. \label{ineq:LinftyHs}
\end{align}
It follows that $H^s \hookrightarrow C^0$ continuously for all $s> d/2$.  
Furthermore, if $d/2 + 1 > s > d/2$, then $H^s \hookrightarrow C^{0,\alpha}$ for $\alpha \in (0,s-d/2)$. 
\end{proposition}

\begin{proposition}[Sobolev embedding] \label{prop:SE} 
	Let $d \geq 2$. Then, for all $1 \leq q < d$, there holds   
	\begin{align*}
	\norm{f}_{L^{\frac{dq}{d-q}}} \lesssim_{q,d}	\norm{\grad f}_{L^q}. 
\end{align*} 
for all $f \in C^\infty_c$.
\end{proposition}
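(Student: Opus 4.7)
The plan is to establish the classical Gagliardo-Nirenberg-Sobolev inequality, first at the endpoint $q=1$, and then extend by a scaling/power trick to the full range $1<q<d$. Throughout we fix $f\in C^\infty_c(\mathbb R^d)$.

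First I would handle the case $q=1$, where the target estimate reads $\norm{f}_{L^{d/(d-1)}} \lesssim \norm{\grad f}_{L^1}$. Since $f$ is compactly supported, the fundamental theorem of calculus along the $i$-th coordinate direction gives
\begin{align*}
\abs{f(x)} \leq \int_{-\infty}^{\infty} \abs{\partial_i f(x_1,\ldots,s_i,\ldots,x_d)}\, ds_i, \qquad i=1,\ldots,d.
\end{align*}
Taking the geometric mean of the $d$ resulting inequalities yields
\begin{align*}
\abs{f(x)}^{d/(d-1)} \leq \prod_{i=1}^d \left( \int_{-\infty}^{\infty} \abs{\partial_i f}\, ds_i \right)^{1/(d-1)}.
\end{align*}
Next I would integrate this inequality successively in $x_1, x_2, \ldots, x_d$, at each step applying the generalized Hölder inequality (the Loomis-Whitney type estimate) to pull the $d-1$ factors that still depend on the variable being integrated out into an $L^{d-1}$ product. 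This yields
\begin{align*}
\norm{f}_{L^{d/(d-1)}(\mathbb R^d)} \leq \prod_{i=1}^d \norm{\partial_i f}_{L^1(\mathbb R^d)}^{1/d} \leq \norm{\grad f}_{L^1},
\end{align*}
which is the desired endpoint inequality.

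For the general case $1<q<d$, I would set $p = dq/(d-q)$ and apply the $q=1$ inequality to the auxiliary function $g = \abs{f}^\gamma$ with $\gamma = (d-1)q/(d-q) > 1$, chosen precisely so that $\gamma d/(d-1) = p$. Since $f\in C^\infty_c$, one can justify $g\in W^{1,1}$ with $\grad g = \gamma \abs{f}^{\gamma-1}\,\mathrm{sgn}(f)\,\grad f$ (a standard approximation argument handles the non-smoothness at $\{f=0\}$). Then
\begin{align*}
\norm{f}_{L^p}^{\gamma} = \norm{g}_{L^{d/(d-1)}} \lesssim \norm{\grad g}_{L^1} \lesssim \int \abs{f}^{\gamma-1} \abs{\grad f}\, dx,
\end{align*}
and Hölder's inequality with exponents $q$ and $q'=q/(q-1)$ gives
\begin{align*}
\int \abs{f}^{\gamma-1}\abs{\grad f}\, dx \leq \norm{\grad f}_{L^q} \norm{f}_{L^{(\gamma-1)q'}}^{\gamma-1}.
\end{align*}
With the chosen $\gamma$, one checks algebraically that $(\gamma-1)q' = p$, so the inequality becomes $\norm{f}_{L^p}^{\gamma} \lesssim \norm{\grad f}_{L^q}\norm{f}_{L^p}^{\gamma-1}$, and dividing through (using $\norm{f}_{L^p}<\infty$ since $f\in C^\infty_c$) yields the result.

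The step that requires the most care is the iterated Hölder argument in the $q=1$ case: one must arrange the order of integrations so that, at each stage, the factors that are functions of the currently-integrated variable can be paired via Hölder with exponents $d-1$ and their integrals extracted cleanly. The rest of the argument, including the algebraic bookkeeping $(\gamma-1)q'=p$ and the approximation argument justifying differentiation of $\abs{f}^\gamma$, is routine.
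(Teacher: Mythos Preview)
Your proof is correct and is the classical Gagliardo--Nirenberg argument; the algebraic checks $\gamma d/(d-1)=p$ and $(\gamma-1)q'=p$ with $\gamma=(d-1)q/(d-q)$ are right, and the iterated H\"older (Loomis--Whitney) step for $q=1$ is exactly what is needed. The paper does not give its own proof of this proposition---it is stated in the appendix and the reader is referred to standard texts such as Evans or Adams--Fournier, where precisely this argument appears---so there is nothing to compare against beyond confirming your argument matches the textbook proof.
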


\begin{proposition} \label{prop:GNSLinftyBds}
	The following holds for all $f \in \mathcal{S}$ and $0 \leq i \leq m$ 
	\begin{align}
		\| f \|_{\dot{W}^{i, \frac{2m}{i}}} \lesssim_{i,m}  \|f\|_{L^\infty}^{1 - \frac{i}{m}} \|f\|_{\dot{H}^m}^{\frac{i}{m}}. 
		\label{eq:GNSLinftyBds}
	\end{align}
\end{proposition}

\begin{proposition}[Sobolev product rule] \label{lem:SobolevProductRule} 
	Let $f,g \in H^s$ for $s > 0$. Then there holds 
	\begin{align*}
		\norm{fg}_{H^s} & \lesssim \norm{f}_{H^s}\norm{g}_{L^\infty} + \norm{g}_{H^s}\norm{f}_{L^\infty}. 
	\end{align*}
	From \eqref{ineq:LinftyHs}, this implies that $H^s$ for $s>d/2$ is a Banach algebra: 
	\begin{align*}
		\norm{fg}_{H^s} & \lesssim \norm{f}_{H^s} \norm{g}_{H^s}. 
	\end{align*}
\end{proposition}

\section{Laplace transforms} \label{sec:FourierLaplace}
Let us briefly review the fundamental ideas of Laplace transforms.
For a real discussion of the topic, see the following text on Paley-Wiener theory (see~\cite[Chap.~18]{Paley-Wiener} or~\cite[Chap.~2]{book-volterra}).

Let $f:[0,\infty) \rightarrow \Complex$ satisfy $e^{-\mu t} f(t) \in L^1$ for some $\mu \in \Real$.
Then for all complex numbers $\Re z \geq \mu$, we can define the Fourier-Laplace transform via the (absolutely convergent) integral: 
\begin{align}
\hat{f}(z) & = \frac{1}{2\pi} \int_0^\infty e^{-zt} f(t) dt. 
\end{align}
This transform is inverted by integrating the $\tilde{f}$ along a so-called `Bromwich contour' via the inverse Laplace transform:
If $f(z)$ is holomorphic in a half-plane containing $\Gamma = \set{z \in \Complex : z = \gamma + i \omega, \quad \omega \in \mathbb R}$, 
and $f(\gamma + i \cdot ) \in L^1$, then we can define the inverse Laplace transform as 
\begin{align}
\check{f}(t) = \int_{\gamma - i\infty}^{\gamma + i \infty} e^{zt} f(z) dz. 
\end{align}
The following lemma outlines properties analogous to the Fourier transform (the only property which is perhaps unfamiliar is that $\tilde{f}$ is holomorphic in a half-plane -- easily checked by direct verification of the Cauchy-Riemann equations). 
\begin{lemma}
Let $f,g:[0,\infty) \rightarrow \Complex$ and define
\begin{align*}
f \ast g(t) = \int_0^t f(t-\tau) g(\tau) d\tau.  
\end{align*}
\item Let $e^{-\mu t}f \in L^1$ and $e^{-\mu t} g \in L^1$. Then for all $\Re z > \mu$, there holds 
\begin{align*}
\widetilde{f \ast g}(z) = 2\pi \tilde{f}(z) \tilde{g}(z). 
\end{align*}
\item Let $e^{-\mu t}f \in L^1$. Then for all $\Re z > \mu$, there holds 
\begin{align*}
\widetilde{f'(t)} & = f(0) - z \widetilde{f}(z); 
\end{align*}
(something analogous holds for higher derivatives of course).
\item Let $e^{-\mu t} f \in L^1$. Then for all $\Re z > \mu$, $\tilde{f}(z)$ is holomorphic.
Furthermore,
\begin{align*}
\partial_{\lambda}\left(\tilde{f}(\lambda+i\omega)\right) & = \widetilde{tf(t)}(\lambda + i\omega) \\
\partial_{\omega}\left(\tilde{f}(\lambda+i\omega)\right) & = i\widetilde{tf(t)}(\lambda + i\omega). 
\end{align*}
\item For all $f:[0,\infty) \rightarrow \Complex$ with $e^{-\mu t} f \in L^1_t \cap L^2_t$, there holds for all $\gamma \geq \mu$ 
\begin{align}
\int_{\gamma - i\infty}^{\gamma + i\infty} \abs{\tilde{f}(z)}^2 dz = \int_0^\infty \abs{f(t)}^2 e^{-2\gamma t} dt. 
\end{align}
\end{lemma}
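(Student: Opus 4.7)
The lemma collects four standard properties of the one-sided Laplace transform, and my plan is to prove each by reducing it to a corresponding statement for the Fourier transform after absorbing the exponential weight $e^{-\gamma t}$ into the function. The underlying principle is that for $z = \gamma + i\omega$ with $\gamma \geq \mu$, the map $\omega \mapsto \tilde f(\gamma + i\omega)$ is (up to the $1/(2\pi)$ normalization) the Fourier transform of the $L^1$ function $g_\gamma(t) := e^{-\gamma t} f(t) \mathbf{1}_{t \geq 0}$.

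First I would handle the convolution identity. Since $f \ast g$ is supported on $[0,\infty)$ and $e^{-\mu t}f, e^{-\mu t}g \in L^1$, the double integral $\int_0^\infty\int_0^\infty e^{-zt}|f(t-\tau)||g(\tau)|\mathbf{1}_{\tau\leq t}\,d\tau\,dt$ is absolutely convergent for $\Re z > \mu$, so Fubini applies; after interchanging the order and performing the substitution $s = t-\tau$ the integral separates into $2\pi \tilde f(z)\tilde g(z)$. The derivative formula is an integration by parts: $\widetilde{f'}(z) = \frac{1}{2\pi}\int_0^\infty e^{-zt}f'(t)\,dt$, where the boundary term at $+\infty$ vanishes because $e^{-\mu t}f \in L^1$ together with absolute continuity of $f$ forces $e^{-zt}f(t) \to 0$ along a subsequence (one can first verify the formula for smooth compactly supported $f$ and then pass to the limit in the appropriate weighted space).

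For the holomorphy and the two derivative-in-$z$ formulas, I would differentiate under the integral sign, which is justified because on any half-plane $\{\Re z \geq \mu + \delta\}$ the integrand is dominated by $|t|\,e^{-\delta t}\,e^{-\mu t}|f(t)|$, an $L^1$ majorant. Differentiating yields $\partial_z \tilde f(z) = -\widetilde{tf(t)}(z)$; writing $z = \lambda + i\omega$ and using $\partial_\lambda = \partial_z$, $\partial_\omega = i\partial_z$ gives the two stated formulas with the claimed factor of $i$. Holomorphy then follows either from the validity of the Cauchy--Riemann equations for $\tilde f$ or directly from the existence of the complex derivative.

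The Plancherel-type identity in item four is the main content and the one I would treat last. The approach is to set $g_\gamma(t) = e^{-\gamma t}f(t)\mathbf{1}_{t \geq 0}$, which lies in $L^1 \cap L^2(\mathbb R)$ under the stated hypotheses, and observe that
\begin{align*}
\tilde f(\gamma + i\omega) = \frac{1}{2\pi}\int_{-\infty}^\infty e^{-i\omega t} g_\gamma(t)\,dt,
\end{align*}
so $\tilde f(\gamma + i\cdot)$ is a constant multiple of $\widehat{g_\gamma}$ in the normalization used in Appendix \ref{sec:Fourier}. Parametrizing the contour as $z = \gamma + i\omega$ with $dz = i\,d\omega$ and applying Plancherel for the Fourier transform to $g_\gamma$ then yields the identity, since $\|g_\gamma\|_{L^2(\mathbb R)}^2 = \int_0^\infty e^{-2\gamma t}|f(t)|^2\,dt$. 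The only thing to watch is bookkeeping of the $(2\pi)$'s between the Laplace and Fourier normalizations, which I expect to work out straightforwardly.

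I do not anticipate a genuine obstacle; all four items are standard manipulations, and the only non-routine point is justifying Fubini, differentiation under the integral, and passage to the limit in the integration by parts, each of which follows from the absolute integrability provided by the weight $e^{-\mu t}f \in L^1$ for $\Re z > \mu$ (or $\geq \mu$ in the $L^1 \cap L^2$ case of item four).
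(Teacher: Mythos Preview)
Your proposal is correct and complete. The paper does not actually prove this lemma: it simply states that the properties are ``analogous to the Fourier transform'' and remarks that holomorphy is ``easily checked by direct verification of the Cauchy--Riemann equations,'' so your argument---reducing each item to the corresponding Fourier identity for $e^{-\gamma t}f(t)\mathbf{1}_{t\geq 0}$ and justifying differentiation under the integral---is precisely the standard verification the paper leaves to the reader.
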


If one doesn't use any additional complex analysis, the above discussion doesn't seem to contain any information that isn't basically expressed by just taking the Fourier transform of $e^{-\mu t} f(t) \mathbf{1}_{t \geq 0}$.
However, this misses the natural power of the Laplace transform and leads to sub-optimal treatments of the linearized problems, such as the clumsy arguments in \cite{BMM13,BMM16}.
The following lemma from Paley-Wiener theory is sufficient for our purposes; we have not endeavored to be sharp, but instead we have focused on giving a simple presentation. 
\begin{lemma} \label{lem:VoltPen}
Consider the (complex-valued) Volterra equation
\begin{align}
\rho(t) = H(t) + \int_0^t K(t-\tau) \rho(\tau) d\tau. \label{eq:aVolt}
\end{align}
Assume $e^{-\mu t} H \in L^1 \cap L^2(0,T)$ and $e^{-\mu t} K \in L^1 \cap L^2(\Real_+)$ for some $\mu \in \Real$. Moreover, assume that
\begin{align}
\inf_{z \in \Complex: \Re z > \mu} \abs{1-\widetilde{K}(z)} \geq \kappa > 0 
\end{align}
and that for all $\gamma > \mu$, $\tilde{K}(\gamma + i \cdot) \in L^1$. 
Then, there exists a unique solution $\rho:[0,T) \rightarrow \Complex$ with $e^{-\mu t}\rho \in L^2(0,T)$ and for all $z$ with $\Re z \geq \mu$, there holds the following (extending $H$ by zero for $t > T$),
\begin{align*}
\tilde{\rho}(z)= \frac{\tilde{H}(z)}{1-\widetilde{K}(z)}. 
\end{align*}
and
\begin{align}
\norm{e^{-\mu t}\rho}_{L^2_t(0,T)} \lesssim \frac{1}{\kappa} \norm{e^{-\mu t} H}_{L^2_t(0,T)}. \label{ineq:L2toL2Volt}
\end{align}
Furthermore, if we define
\begin{align*}
\tilde{R}(z) = \left(\frac{\widetilde{K}}{1- \widetilde{K}}\right), 
\end{align*}
then
\begin{align*}
\abs{R(t)} \lesssim e^{-\frac{\mu}{2} t}. 
\end{align*}
and 
\begin{align*}
\rho(t)  = H(t) + \int_0^t R(t-\tau) H(\tau) d\tau. 
\end{align*}
\end{lemma}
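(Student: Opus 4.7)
The plan is to work entirely on the Laplace-transform side: solve the equation algebraically there, then transfer estimates back via Plancherel for the $L^2$ bound on $\rho$ and via contour deformation plus the $L^1$ hypothesis on $\tilde{K}(\gamma+i\cdot)$ for the pointwise bound on $R$. Existence and uniqueness are handled separately on the physical side.

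First I would dispatch local existence by standard Picard iteration for the contraction $\rho \mapsto H + K * \rho$ in $L^\infty_t$ on a sufficiently short interval $[0,\tau_0]$, which works because $K \in L^1_{\mathrm{loc}}$; the iterates agree on overlaps so this extends to a unique continuous solution on all of $[0,T)$, with the weighted $L^2$ bound \eqref{ineq:L2toL2Volt} (derived below) preventing any premature breakdown. Next, extending $H$ and $\rho$ by zero for $t > T$ and applying the Laplace transform converts the Volterra identity into an algebraic equation; the convolution formula together with the Penrose-type hypothesis $|1-\tilde{K}(z)| \geq \kappa$ on $\Re z > \mu$ gives the explicit formula
\begin{equation*}
\tilde{\rho}(z) = \frac{\tilde{H}(z)}{1 - \tilde{K}(z)}, \qquad \Re z > \mu.
\end{equation*}
The $L^2$ estimate is then immediate from the Plancherel identity stated in the previous lemma applied on the Bromwich line $\Re z = \gamma$ for $\gamma$ slightly exceeding $\mu$, combined with the uniform lower bound $\kappa$ on $|1-\tilde{K}|$; sending $\gamma \downarrow \mu$ (using $e^{-\mu t}H \in L^2$) yields $\|e^{-\mu t}\rho\|_{L^2} \leq \kappa^{-1}\|e^{-\mu t}H\|_{L^2}$.

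For the resolvent, I would define $R$ as the inverse Laplace transform of $\tilde{R}(z) = \tilde{K}(z)/(1-\tilde{K}(z))$; the algebraic identity $\tilde{H}/(1-\tilde{K}) = \tilde{H} + \tilde{R}\,\tilde{H}$ combined with injectivity of the Laplace transform (and the convolution identity applied in reverse) gives the representation $\rho = H + R * H$. The pointwise bound on $R$ is obtained from the Bromwich inversion formula taken along a shifted contour $\Re z = \mu/2$, which lies strictly inside the region of holomorphy of $\tilde{R}$: estimating
\begin{equation*}
|R(t)| \;\leq\; e^{(\mu/2)t}\, \int_{\mathbb{R}} \bigl|\tilde{R}(\tfrac{\mu}{2}+i\omega)\bigr|\, d\omega \;\leq\; \frac{e^{(\mu/2)t}}{\kappa}\, \bigl\|\tilde{K}(\tfrac{\mu}{2}+i\cdot)\bigr\|_{L^1_\omega},
\end{equation*}
and the last factor is finite by the standing hypothesis that $\tilde{K}(\gamma+i\cdot) \in L^1$ for $\gamma > \mu$, giving the desired bound of the form stated.

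The main obstacle is this final step: one has to be genuinely careful that the hypotheses actually permit the contour shift. Specifically, one needs the $L^1$-in-$\omega$ norm of $\tilde{K}(\gamma + i\cdot)$ to be controlled at the specific height $\gamma=\mu/2$ chosen, and that $1-\tilde{K}$ remains bounded below there (which follows since $\mu/2$ is in the half-plane $\Re z > \mu$ where the Penrose hypothesis holds, but only if $\mu$ has the appropriate sign — otherwise one simply chooses any $\gamma \in (\mu, \mu+\epsilon)$ and replaces $\mu/2$ accordingly throughout). The remaining steps — Picard iteration, Laplace inversion, and Plancherel — are essentially mechanical once the algebraic identity on the Laplace side is in hand.
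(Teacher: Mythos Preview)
The paper does not actually supply a proof of this lemma; it is stated as background from Paley--Wiener theory with a pointer to \cite{Paley-Wiener,book-volterra}. Your approach --- solve algebraically on the Laplace side, recover the $L^2$ estimate by Plancherel along a Bromwich line, and get the pointwise bound on $R$ by inverting along a shifted contour --- is exactly the standard Paley--Wiener argument the paper is alluding to, and it is correct in outline.

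Two remarks. First, your contour computation gives $|R(t)| \lesssim e^{(\mu/2)t}$, not $e^{-(\mu/2)t}$ as the lemma is written; your sign is the correct one (in the paper's application $\mu = -\delta|k| < 0$, and indeed in the main text the bound obtained is $|R(t,k)| \lesssim |k|^{-1} e^{-\delta|k|t} = |k|^{-1} e^{\mu t}$, evaluated on the contour $\gamma = \mu$ itself, which is legal there because $\tilde{K}$ is actually holomorphic on the larger half-plane $\Re z > -\lambda_0|k|$). So the discrepancy is a typo in the stated lemma, and your caveat about the sign of $\mu$ governing whether $\mu/2 > \mu$ is exactly the right diagnosis. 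Second, there is a small bootstrapping point you skate over: to invoke Plancherel on $\tilde{\rho}$ you need to know a priori that $e^{-\gamma t}\rho \in L^2$ for \emph{some} $\gamma$, before you can derive the sharp bound at $\gamma = \mu$. The clean fix is to observe that $\|e^{-\gamma t}K\|_{L^1} \to 0$ as $\gamma \to \infty$ by dominated convergence, so for $\gamma$ large the Volterra map is a contraction on $e^{\gamma t}L^2(0,\infty)$; this puts $\rho$ in the right space, after which the algebraic formula and analytic continuation carry the bound down to $\gamma = \mu^+$.
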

\begin{remark}
Note that $\mu$ does not have to be positive. 
\end{remark}

One can take advantage of contour deformation in other ways if one has more precise information about the location of the complex poles of $1-\widetilde{K}$. 
Consider the following explicit and illuminating example,
As the linearized Vlasov--Poisson equations with the Poisson background 
\begin{align*}
f^0(v) = \frac{1}{(1+ \abs{v}^2)^{d-1}},
\end{align*}
the $\mathcal{K}$ in the Volterra equation is given by \eqref{eq:KlinVP}, and hence Lemma \ref{lem:ExplicitVolt} below gives an explicit solution to the linearized Vlasov--Poisson equation.
\begin{lemma} \label{lem:ExplicitVolt}
Let $\lambda > 0$
\begin{align}
K(t) = \alpha t e^{-\lambda \abs{t}}. 
\end{align}
and consider the Volterra equation \eqref{eq:aVolt}.
Suppose $H \in L^2$.
\begin{itemize}
\item If $\alpha < 0$, then there holds
\begin{align*}
\rho(t) = H(t) + \sqrt{-\alpha} \int_0^t e^{-\lambda (t-\tau)} \sin( \sqrt{-\alpha}(t-\tau))H(\tau) d\tau.
\end{align*}
\item If $\alpha > 0$, then there holds
\begin{align*}
\rho(t) = H(t) + \sqrt{\alpha} \int_0^t e^{-\lambda (t-\tau)} \sinh( \sqrt{\alpha}(t-\tau))H(\tau) d\tau.
\end{align*}
\end{itemize}
\end{lemma}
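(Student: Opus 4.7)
The plan is to apply Lemma~\ref{lem:VoltPen} with the explicit choice $K(t) = \alpha t e^{-\lambda t}$ and to compute the resolvent $R$ in closed form via the Laplace transform. The first step is to evaluate $\widetilde{K}$: integrating twice by parts (or just recalling the standard Laplace pair $t e^{-\lambda t} \longleftrightarrow (z+\lambda)^{-2}$) gives, up to the $2\pi$ normalization of the lemma, $\widetilde{K}(z) = \alpha/(z+\lambda)^2$, which is holomorphic on the half-plane $\operatorname{Re} z > -\lambda$. In the $\alpha<0$ case, $1 - \widetilde{K}$ has its only zeros at $z = -\lambda \pm i\sqrt{-\alpha}$, both with real part $-\lambda < 0$, so the hypotheses of Lemma~\ref{lem:VoltPen} are satisfied with any $\mu \in (-\lambda, 0)$; in the $\alpha>0$ case the zeros are the real numbers $-\lambda \pm \sqrt{\alpha}$, again lying strictly in the left half-plane provided $\sqrt{\alpha} < \lambda$ (which must be an implicit hypothesis if one wants $\rho \in L^2$, but is not needed to write down the resolvent formula).

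Next I would form the resolvent and invert explicitly. Algebra gives
\begin{align*}
\widetilde{R}(z) = \frac{\widetilde{K}(z)}{1 - \widetilde{K}(z)} = \frac{\alpha}{(z+\lambda)^2 - \alpha}.
\end{align*}
For $\alpha < 0$, write $\beta = \sqrt{-\alpha}$, so that the denominator becomes $(z+\lambda)^2 + \beta^2$. Comparing with the standard Laplace pair $e^{-\lambda t}\sin(\beta t) \longleftrightarrow \beta/((z+\lambda)^2 + \beta^2)$ and adjusting by the factor $-\beta$ identifies $R$ as a constant multiple of $e^{-\lambda t}\sin(\beta t)$, of magnitude $\sqrt{-\alpha}$, which is precisely the claimed convolution kernel. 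For $\alpha > 0$ the analogous pair $e^{-\lambda t}\sinh(\sqrt{\alpha}\, t) \longleftrightarrow \sqrt{\alpha}/((z+\lambda)^2 - \alpha)$ yields the $\sinh$ version. In both cases the final formula $\rho = H + R \ast H$ of Lemma~\ref{lem:VoltPen} produces the stated identity.

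An alternative (and entirely self-contained) route, which I would use as a cross-check, is to bypass the Laplace machinery and verify directly that the proposed $R$ satisfies the renewal equation $R = K + K \ast R$: since $K(t) = \alpha t e^{-\lambda t}$ satisfies the ODE $K'' + 2\lambda K' + \lambda^2 K = 0$ with initial data $K(0)=0$, $K'(0)=\alpha$, differentiating the candidate convolution twice reduces the identity to a two-line ODE comparison using the trigonometric/hyperbolic addition formulas. No genuine analytical obstacle arises; the only subtlety is bookkeeping the $2\pi$ constant in the Laplace-transform convention of Appendix~\ref{sec:FourierLaplace} and pinning down the overall sign of $R$, which is most safely nailed down by the direct verification step.
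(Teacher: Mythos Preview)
Your proposal is correct and follows essentially the same route as the paper: compute $\widetilde{K}(z)=\alpha/(z+\lambda)^2$, form $\widetilde{R}=\widetilde{K}/(1-\widetilde{K})=\alpha/((z+\lambda)^2-\alpha)$, and invert. The only cosmetic difference is that the paper inverts by an explicit contour-deformation/residue calculation rather than by quoting a standard Laplace pair, and your added renewal-equation cross-check (and your caution about the overall sign and the $2\pi$ normalization) is a sensible safeguard that the paper does not include.
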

\begin{proof}
The Laplace transform of $K$ is computed easily:
\begin{align}
\widetilde{K}(z) = \alpha \int_0^\infty t e^{-\lambda \abs{t} - zt} dt = \frac{\alpha}{(z+\lambda)^2}.  
\end{align}
Now consider
\begin{align}
\widetilde{R}(z) = \frac{\widetilde{K}}{1-\widetilde{K}} = \frac{\alpha}{(z+\lambda)^2 - \alpha}. 
\end{align}
The inverse Laplace is then computed via:
\begin{align}
R(t) = \alpha \int_{\gamma - i\infty}^{\gamma + i \infty} \frac{1}{(z+\lambda)^2 - \alpha} dz.
\end{align}
Consider first the case that $\alpha < 0$. In this case: 
\begin{align*}
\frac{1}{(z+\lambda)^2 - \alpha} = \frac{1}{(z + \lambda -i\sqrt{-\alpha})(z+ \lambda + i \sqrt{-\alpha})}. 
\end{align*}
By contour deformation of the Bromwich contour and the Cauchy residue theorem we have
\begin{align*}
R(t) & = \frac{\alpha}{2i\sqrt{-\alpha}} e^{-\lambda t + i t\sqrt{-\alpha}} - \frac{\alpha}{2i\sqrt{-\alpha}} e^{-\lambda t - i t\sqrt{-\alpha}} \\
&  = -\sqrt{-\alpha} \sin(t \sqrt{-\alpha}) e^{-\lambda t}. 
\end{align*}
The calculation for $\alpha > 0$ follows similarly. 
\end{proof}

\bibliographystyle{abbrv}
\bibliography{eulereqns,JacobBib,VladBib}

\end{document}